\newtheorem{theorem}{Theorem}[section]
\newtheorem{lemma}[theorem]{Lemma}
\newtheorem{proposition}[theorem]{Proposition}
\newtheorem{corollary}[theorem]{Corollary} 
\theoremstyle{definition}  
\newtheorem{definition}[theorem]{Definition}
\newtheorem{example}[theorem]{Example}
\newtheorem{conjecture}[theorem]{Conjecture}  
\newtheorem{remark}[theorem]{Remark}
\newcommand{\Tr}{\text{Tr}}
\newcommand{\id}{\text{id}} 
\newcommand{\Ker}{\text{Ker\,}} 
\newcommand{\Fun}{\text{Fun}}
\newcommand{\FPdim}{\text{FPdim}} 
\renewcommand{\Vec}{\operatorname{\operatorname{\mathsf{Vec}}}}
\DeclareMathOperator{\Pic}{\operatorname{\mathsf{Pic}}}
\DeclareMathOperator{\BrPic}{\operatorname{\mathsf{BrPic}}}
\DeclareMathOperator{\Aut}{\operatorname{\mathsf{Aut}}}
\DeclareMathOperator{\Rad}{\operatorname{\mathsf{Rad}}}
\DeclareMathOperator{\Stab}{\operatorname{\mathsf{Stab}}}
\DeclareMathOperator{\OutStab}{\operatorname{\mathsf{OutStab}}}
\DeclareMathOperator{\Out}{\operatorname{\mathsf{Out}}}
\DeclareMathOperator{\Inn}{\operatorname{\mathsf{Inn}}}
\DeclareMathOperator{\Inv}{\operatorname{\mathsf{Inv}}}
\DeclareMathOperator{\Rep}{\operatorname{\mathsf{Rep}}}
\DeclareMathOperator{\Hom}{\operatorname{\mathsf{Hom}}}
\DeclareMathOperator{\Sym}{\operatorname{\mathsf{S}}}
\newcommand{\ad}{\text{ad}}
\newcommand{\B}{\mathcal{B}}
\newcommand{\C}{\mathcal{C}}
\newcommand{\D}{\mathcal{D}}
\newcommand{\Z}{\mathcal{Z}}
\renewcommand{\L}{\mathcal{L}}
\newcommand{\M}{\mathcal{M}}
\newcommand{\A}{\mathcal{A}}
\newcommand{\KP}{\mathcal{K}\mathcal{P}}
\newcommand{\TY}{\mathcal{T}\mathcal{Y}}
\newcommand{\be}{\mathbf{1}}
\newcommand{\g}{\mathfrak{g}}
\DeclareMathOperator{\ind}{\operatorname{\mathsf{ind}}}
\DeclareMathOperator{\conj}{\operatorname{\mathsf{conj}}}
\renewcommand{\be}{\mathbf{1}}
\newcommand{\bt}{\boxtimes}
\newcommand{\ot}{\otimes}
\renewcommand{\g}{\mathfrak{g}}
\newcommand{\beq}{\begin{equation}}
\newcommand{\eeq}{\end{equation}}
\newcommand{\bpf}{\begin{proof}}
\newcommand{\epf}{\end{proof}}
\newcommand{\bth}{\begin{theorem}}
\renewcommand{\eth}{\end{theorem}}
\newcommand{\bpr}{\begin{proposition}}
\newcommand{\epr}{\end{proposition}}
\newcommand{\ble}{\begin{lemma}}
\newcommand{\ele}{\end{lemma}}
\newcommand{\bco}{\begin{corollary}}
\newcommand{\eco}{\end{corollary}}
\newcommand{\bde}{\begin{definition}}
\newcommand{\ede}{\end{definition}}
\newcommand{\bex}{\begin{example}}
\newcommand{\eex}{\end{example}}
\newcommand{\bre}{\begin{remark}}
\newcommand{\ere}{\end{remark}}
\newcommand{\bcj}{\begin{conjecture}}
\newcommand{\ecj}{\end{conjecture}}
\begin{document}
\title[Brauer-Picard groups of fusion categories]
{On the Brauer-Picard groups of fusion categories}
\author{Ian Marshall}
\address{Department of Mathematics and Statistics,
University of New Hampshire,  Durham, NH 03824, USA}
\email{isg3@wildcats.unh.edu}
\author{Dmitri Nikshych}
\address{Department of Mathematics and Statistics,
University of New Hampshire,  Durham, NH 03824, USA}
\email{dmitri.nikshych@unh.edu}

\begin{abstract}
We develop methods of computation of the Brauer-Picard groups of fusion categories and apply them
to compute such groups for several classes of fusion categories of prime power dimension: representation
categories of elementary abelian groups with twisted associativity,  extra special $p$-groups, and the Kac-Paljutkin 
Hopf algebra. We conclude that many finite groups of Lie type occur as  composition factors of the Brauer-Picard groups
of pointed fusion categories. 
\end{abstract}

\maketitle


\section{Introduction}

Let $\A$ be a fusion category.  The Brauer-Picard group $\BrPic(\A)$ was introduced in \cite{ENO1} as
the group of equivalence classes of invertible $\A$-bimodule categories. The motivation for this  definition
is that given a finite group $G$ and a $G$-graded fusion category $\B =\oplus_{g\in G}\,\B_g$ whose trivial
component $\B_1$ is equivalent to $\A$ there is a group homomorphism 
\begin{equation}
\label{G to BrPic}
G\to \BrPic(\A)
\end{equation}
and, conversely, any such $\B$ can be constructed from  a homomorphism \eqref{G to BrPic} and some cohomological data.
It was also shown in \cite{ENO1}  that there is a canonical isomorphism
\begin{equation}
\label{BP=Aut}
\BrPic(\A) \cong \Aut^{br}(\Z(\A)),
\end{equation}
where $\Aut^{br}(\Z(\A))$ is the group of isomorphism classes of braided tensor autoequivalences of the center $\Z(\A)$ of $\A$. 
The latter group can be thought of as a categorical analogue of an orthogonal group. For example, if $\A$ is the representation
category of a finite abelian group $A$, one has $\Aut^{br}(\Z(\A)) \cong O(A \oplus \widehat{A})$, where $\widehat{A}$ is the group of
characters of $A$ and $O(A \oplus \widehat{A})$ is the group of automorphisms of $A \oplus \widehat{A}$ preserving its canonical
quadratic form. 

The definition of $\BrPic(\A)$ is rather abstract, while the group $\Aut^{br}(\Z(\A))$ is defined in more concrete terms
and, hence, is easier to compute.  There is also an important geometric intuition coming from the action of $\Aut^{br}(\Z(\A))$ 
on Lagrangian subcategories of the center (these are categorical counterparts  of Lagrangian subspaces).  
Such actions were used in \cite{BN, NR, R}
to compute Brauer-Picard groups of various symmetric tensor categories.  An analogue of the Bruhat
decomposition of $\BrPic(\Rep(G))$, where $G$ is a finite group, was recently discussed in \cite{LP}. 

Let $p$ be a prime. By a {\em fusion $p$-category} we mean a  fusion category whose Frobenius-Perron dimension is
a power of $p$.  For odd $p$ such a category is always integral, i.e., is equivalent to the representation category of some
semisimple quasi-Hopf algebra \cite{GN}, while for $p=2$  it is a $\mathbb{Z}_2$-extension of an integral fusion category. 

Fusion $p$-categories were classified 
in \cite{DGNO1}  (see also \cite[Section 9.14]{EGNO}).  Namely, it was shown that any integral fusion $p$-category $\A$ is 
{\em group-theoretical}, i.e., it is categorically Morita equivalent to a pointed fusion category.  
Explicitly, this means that $\A$ is equivalent to the category of bimodules
over an algebra in  the category $\C(G,\, \omega)$ of vector spaces graded by a $p$-group $G$
with the associativity constraint given by $\omega\in H^3(G,\, k^\times)$.  Thus, all fusion $p$-categories
can be described  in terms of $p$-groups and their cohomology. 

In this paper we develop methods of computing the group $\BrPic(\A)$, where $\A$ is a fusion category,
using the induction homomorphism
\begin{equation}
\label{intro ind}
\ind: \Aut(\A)\to \Aut^{br}(\Z(\A)) \cong  \BrPic(\A)
\end{equation}
from the group of tensor autoequivalences of $\A$ to the group of braided autoequivalences of its center,
see Section~\ref{section 3} for a precise definition. The image of this homomorphism is a subgroup
of $\BrPic(\A)$ which may or may not coincide with the whole group.  If $\tilde\A$ is a fusion category 
categorically Morita equivalent to $\A$ (so that  $\BrPic(\A)\cong \Aut^{br}(\Z(\tilde\A))$  then one can  also
induce central autoequivalences of $\Z(\A)$ from $\tilde{\A}$. Thus, in general, there are several
induction homomorphisms that can be combined to compute the Brauer-Picard group. 
Another useful tool of constructing elements of the Brauer-Picard group is the Picard induction of an invertible $\Z(\A)$-module category 
(i.e., a $\A$-bimodule category)  from an invertible category over a subcategory $\D\subset \Z(\A)$.

We use these methods to begin a systematic study of Brauer-Picard groups of  fusion $p$-categories. 
These groups turn out to be rather large and interesting. In particular, they include extensions of 
many classical finite groups of Lie type.
Since the Brauer-Picard group is invariant under categorical Morita  equivalence, it suffices to compute 
the Brauer-Picar groups of categories $\A= \C(G,\, \omega)$, where $G$ is a $p$-group. We explicitly compute
these groups in the cases  when $G$ is an elementary abelian or extra special $p$-group.  
We also compute the Brauer-Picard group of the 
representation category of the celebrated Kac-Paljutkin Hopf algebra~\cite{KP}.


The paper is organized as follows. 

In Section~\ref{section 2} we recall definitions and basic facts about
Brauer-Picard groups and tensor autoequivalences of fusion categories. 

In Section~\ref{section 3}  we study the induction homomorphism \eqref{intro ind}. 
Proposition~\ref{sse for ker ind} gives an exact sequence that can be used to compute the kernel of $\ind$.  
We characterize the image of induction in Theorem~\ref{alphaA=A} as the stabilizer of a certain Lagrangian algebra
in the center. The orbits of the action of the Brauer-Picard group on the categorical Lagrangian Grassmannian are described
in Section~\ref{section 3.4}. 

In Section~\ref{section 4} we prove several results about the  structure of representation categories of 
twisted quantum doubles (i.e., categories $\Z(\C(G,\, \omega))$). In particular, we analyze the image of 
the above induction homomorphism in this case and give a sufficient conditions for it to be surjective
(Corollary~\ref{Indsurj}).  We also note a useful braided equivalence \eqref{G and AK}
that allows to replace the group $G$ by an easier group.

In Section~\ref{section 5} for an  elementary abelian $p$-group  $V$ we  describe the cohomology group $H^3(V,\, k^\times)$ 
as a $GL(V)$-module and obtain exact sequences containing  the Brauer-Picard groups of categories $\C(V,\, \omega)$.
We show that under certain non-degeneracy conditions these Brauer-Picard groups are abelian extensions of stabilizers
of certain elements in exterior and symmetric algebras of $V^*$. 

Finally, in Section~\ref{section 6} we compute the Brauer-Picard groups of several classes of 
fusion $p$-categories: representation categories of extra special $p$-groups, pointed fusion categories
coming from simple modular Lie algebras, and the representation category of the Kac-Paljutkin 
Hopf algebra of dimension $8$.

\textbf{Acknowledgments} We are grateful to Derek Holt, Geoffrey Mason, Viktor Ostrik, and Leonid Vainerman
for helpful discussions.  We thank  Cesar Galindo for useful discussions and for pointing the paper \cite{GP} to us. 
We thank Ehud Meir for explaining to us the classification of module categories
over Tambara-Yamagami categories from \cite{MM} and sharing his calculations with us. The authors were partially 
supported by the NSA grant H98230-15-1-0003. The second author was partially supported by the NSA grant H98230-16-1-0008.


\section{Preliminaries}
\label{section 2}

\subsection{Fusion categories and Brauer-Picard groups}

In this paper we work over an algebraically closed field $k$  of characteristic $0$.
By a {\em fusion category} we mean a semisimple rigid tensor category with finitely many
isomorphism classes of simple objects and finite dimensional spaces of morphisms. 
For basic results of the theory of fusion categories see \cite{EGNO, DGNO2}.

For a fusion category $\A$  let $\Aut(\A)$ denote the group of isomorphism classes of tensor autoequivalences
of $\A$.  If $\A$ is a braided fusion category we denote $\Aut^{br}(\A)$ the group of isomorphism classes of 
braided tensor autoequivalences of $\A$.

For a group $G$ let $\Aut(G)$ denote the group of automorphisms of $G$, let $\Inn(G)$ denote the subgroup 
of inner automorphisms, and let $\Out(G)=\Aut(G)/\Inn(G)$ be the group of (classes of) outer automorphisms of $G$. 

A fusion category is called {\em pointed} if all its simple objects are invertible with respect to the tensor product.
For a fusion category $\A$ let $\A_{pt}$ denote its maximal pointed fusion subcategory, i.e., the fusion
subcategory generated by invertible simple objects of $\A$. 

Any pointed fusion category is equivalent to some category $\C(G,\, \omega)$ of vector spaces graded by a finite group $G$
with the associativity constraint given by a $3$-cocycle $\omega\in H^3(G,\, k^\times)$. 
The center $\Z(\C(G,\, \omega))$ is known to be equivalent
to the representation category of the  twisted group double, i.e., the quasi-Hopf algebra $D^\omega(G)$.

\begin{remark}
A common notation for $\C(G,\, \omega)$ in the literature is $\Vec_G^\omega$. We choose a different notation
in order to keep subscripts reasonable. 
\end{remark}

The {\em Brauer-Picard} group $\BrPic(\A)$ of a fusion category $\A$ consists of equivalence classes of invertible $\A$-bimodule categories 
\cite{ENO1} with multiplication given by the tensor product over $\A$. 
It was shown in \cite{ENO1, ENO2} that there is a canonical isomorphism
\begin{equation}
\label{Phi}
\Phi: \BrPic(\A)\xrightarrow{\sim} \Aut^{br}(\Z(\A)),
\end{equation}
where $\Z(\A)$ denotes the center of $\A$. 

More generally, for a braided fusion category $\C$ the {\em Picard} group of $\C$ consists of equivalence classes of 
one-sided $\C$-module categories.  When $\C$ is non-degenerate, there is an isomorphism
\begin{equation}
\label{PhiC}
\Phi: \Pic(\C) \xrightarrow{\sim} \ \Aut^{br}(\C).
\end{equation}
Note that  \eqref{Phi} is a special case of \eqref{PhiC} since $\Pic(\Z(\A)) \cong \BrPic(\A)$. 
 
Isomorphism \eqref{Phi} was used in \cite{NR} to analyze Brauer-Picard groups of categories $\C(G,\, 1)=\Vec_G$ and compute
them in concrete examples. 

\subsection{Fusion $p$-categories}

Let $p$ be a prime integer.  By a {\em fusion $p$-category} we mean a fusion category whose Frobenius-Perron dimension
is $p^n$ for some integer~$n$. Such categories were characterized in \cite{DGNO1} (see also \cite[Section 9.4]{EGNO}).
Namely, any such category $\A$  which is {\em integral} (i.e., such that  every object of $\A$ has an 
integral Frobenius-Perron dimension) is group-theoretical,
i.e., there is a $p$-group $G$ and $\omega\in H^3(G,\, k^\times)$ such that $\A$ is categorically Morita equivalent to
 $\C(G,\,\omega)$.  In particular, 
 \[
 \BrPic(\A) \cong \BrPic(\C(G,\,\omega)).
 \] 

\begin{remark}
When $p$ is odd,  a fusion $p$-category is automatically integral. 
\end{remark}

Computations of  Brauer-Picard groups of concrete ``small" examples of fusion $p$-categories were given in \cite{NR}.
In particular, it was shown that
\begin{equation}
\label{BrPic d8 and q8}
\BrPic(\C(D,\,1)) = S_4,\qquad \BrPic(\C(Q,\,1)) = S_3,
\end{equation}
where $D$ and $Q$ are, respectively,  the dihedral and quaternion groups of order $8$  and $S_n$
denotes the symmetric group of degree $n$. 

Furthermore, it was shown in \cite{R}, that if $G$ is  the (unique) non-abelian group of order $p^3$ and exponent $p$,
where $p$ is odd, then 
\begin{equation}
\label{BrPic d of order p3}
\BrPic(\C(G,\,1)) = SL_3(\mathbb{F}_p). 
\end{equation}

\subsection{Tensor autoequivalences of pointed fusion categories}
\label{tens aut ptd}

Let $G$ be a finite group and let $\omega\in H^3(G,\, k^\times)$ be a $3$-cocycle. 

Let $\Stab(\omega) \subset \Aut(G)$ be the subgroup of automorphisms $a\in \Aut(G)$ such that $\omega\circ(a\times a\times a)$
and $\omega$ are cohomologous.  Note that $\Inn(G)\subset \Stab(\omega)$. 
The following result is well known (and is easy to check).

\begin{proposition}
There is a short exact sequence
\begin{equation}
\label{AutCGw}
0\to H^2(G,\, k^\times) \to \Aut(\C(G,\, \omega)) \to \Stab(\omega) \to 0.
\end{equation} 
\end{proposition}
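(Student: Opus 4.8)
The plan is to build the surjection $\Aut(\C(G,\, \omega))\to\Stab(\omega)$ by recording the action of a tensor autoequivalence on the isomorphism classes of simple objects, and then to identify the kernel with the group of monoidal structures on the identity functor, taken up to monoidal natural isomorphism.

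First I would recall that the simple objects of $\C(G,\, \omega)$ are exactly the invertible objects $\delta_g$, $g\in G$, with $\delta_g\otimes\delta_h\cong\delta_{gh}$. Any tensor autoequivalence $F$ permutes the isomorphism classes of simples, and since $F$ is monoidal the induced bijection $a\colon G\to G$ satisfies $\delta_{a(gh)}\cong\delta_{a(g)}\otimes\delta_{a(h)}$, hence is a group automorphism; this defines a homomorphism $\rho\colon\Aut(\C(G,\, \omega))\to\Aut(G)$. Replacing $F$ by an isomorphic tensor functor, I may assume $F(\delta_g)=\delta_{a(g)}$ for all $g$ and (using a normalized cocycle) $F(\mathbf{1})=\mathbf{1}$; then $F$ acts on the source associativity constraint by the scalar $\omega(g,h,k)$ on the component at $(\delta_g,\delta_h,\delta_k)$, while the target constraint contributes $\omega(a(g),a(h),a(k))$. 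The monoidal structure of $F$ is then a family of scalars $J(g,h)\in k^\times$, and the coherence axiom relating $J$ to the two associativity constraints becomes
\[
\omega(g,h,k)\,J(gh,k)\,J(g,h)=J(g,hk)\,J(h,k)\,\omega(a(g),a(h),a(k)),
\]
that is, $dJ=\omega/\bigl(\omega\circ(a\times a\times a)\bigr)$. Hence such a $J$ exists if and only if $\omega$ and $\omega\circ(a\times a\times a)$ are cohomologous, so $\Image(\rho)\subseteq\Stab(\omega)$; conversely, given $a\in\Stab(\omega)$ one picks any $2$-cochain $J$ realizing the displayed coboundary and obtains a tensor autoequivalence mapping to $a$, proving that $\rho$ surjects onto $\Stab(\omega)$.

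Next I would analyze $\Ker\rho$. An element of the kernel is represented, after the normalization above with $a=\id$, by the identity functor equipped with a monoidal structure $J$; the displayed identity then reduces to the $2$-cocycle condition $J(gh,k)\,J(g,h)=J(g,hk)\,J(h,k)$, so $J\in Z^2(G,\, k^\times)$. Composition of monoidal functors corresponds to multiplication of these cocycles, and a monoidal natural isomorphism between $(\id,J)$ and $(\id,J')$ amounts to a family $\eta_g\in k^\times$ witnessing that $J$ and $J'$ differ by a coboundary; therefore $[J]\mapsto[(\id,J)]$ is a well-defined injective homomorphism $H^2(G,\, k^\times)\to\Aut(\C(G,\, \omega))$ whose image is precisely $\Ker\rho$. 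Assembling these facts gives the exact sequence \eqref{AutCGw}.

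The only genuine work is the translation of the monoidal-functor axioms into the displayed cochain identities, together with the verification that every tensor autoequivalence can be normalized to act as claimed on objects, on the unit, and on the associator; I expect this bookkeeping to be the main (though routine) obstacle, after which exactness at every spot is immediate.
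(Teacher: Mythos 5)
Your proof is correct and follows exactly the route the paper has in mind: the paper states the result as ``well known'' without proof, but its accompanying remarks (that $H^2(G,\,k^\times)$ parameterizes tensor structures on the identity endofunctor, and that an autoequivalence $F_{a,\mu}$ is built from $a\in\Stab(\omega)$ together with a $2$-cochain $\mu$ satisfying $d^2(\mu)=\omega\circ(a\times a\times a)/\omega$) are precisely the two halves of your argument. The only cosmetic difference is the direction convention for the monoidal structure $J$, which flips the coboundary $\omega/(\omega\circ(a\times a\times a))$ to its inverse and is immaterial.
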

Here $H^2(G,\, k^\times)$ parameterizes the tensor functor structures on the identity endofunctor
of $\C(G,\, \omega)$. Given $a\in \Stab(\omega)$ and a $2$-cochain $\mu$ such that  
\[
d^2(\mu) =\frac{\omega\circ(a\times a\times a)}{\omega}
\]
let $F_{a,\mu}$ denote the corresponding autoequivalence of $\C(G,\, \omega)$.

\subsection{Lagrangian subcategories}

Let $\C$ be a non-degenerate braided fusion category.
A fusion subcategory $\mathcal{L}\subset \C$ is called {\em Lagrangian} if $\L$ is Tannakian,
i.e., $\L =\Rep(G)$ for some finite group $G$, and  $\L$ coincides with its M\"uger centralizer, i.e., $\L =\L'$.  
In this case the regular algebra $A =\Fun(G)$ of $\Rep(G)$ is a Lagrangian algebra in $\C$  (see Section~\ref{section 3.3})
and  the category $\C_A$ of left $A$-modules in $\C$ is equivalent to
$\C(G,\, \omega)$ for some $\omega\in H^3(G,\,k^\times)$, so that there is a braided tensor 
equivalence 
\begin{equation}
\label{C=ZVecGw}
\C \cong \Z(\C(G,\, \omega)).
\end{equation}
See \cite[Section 4.4.10]{DGNO2} for details.

\subsection{Properties of the Picard induction}
\label{section 2.5}

Let $\C$ be a braided fusion category and let $\D\subset \C$ be a fusion subcategory.
There is a homomorphism of Picard groups given by the induction:
\begin{equation}
\label{Picard induction}
P_\D^\C: \Pic(\D) \to \Pic(\C) : \M \mapsto \C\bt_\D \M. 
\end{equation}
We will call this homomorphism the {\em Picard induction}.   A different kind of induction 
(of central autoequivalences) will be considered in Section~\ref{section 3}.

The kernel of $P_\D^\C$  was described in \cite[Proposition 3.11]{BN}.
Namely, let 
\[
\C =\bigoplus_{\alpha \in \Sigma}\, \C_\alpha
\]
be the decomposition of $\C$ into a direct sum of $\D$-module subcategories. Then 
\begin{equation}
\label{Ker PDC}
\Ker(P_\D^\C) = \{ \C_\alpha ,\, \alpha \in \Sigma \mid \text{$\C_\alpha$ is an invertible $\D$-module category} \}.
\end{equation}


\section{Induction of central autoequivalences}
\label{section 3}

\subsection{Induction $ \Aut(\A) \to \Aut^{br}(\Z(\A))$}

Let $\A$ be a fusion category. Recall that the objects of the center $\Z(\A)$ of $\A$ 
are pairs $(Z,\, \gamma)$, where $Z$ is an object of $\A$ and $\gamma= \{\gamma_X : X \ot Z \to Z\ot X\}$
is a natural isomorphism satisfying certain coherence axioms, see, e.g., \cite[7.13]{EGNO}. 
The fusion category $\Z(\A)$ has a canonical braiding and 
there is an   induction homomorphism
\begin{equation}
\label{induction from Aut}
\ind: \Aut(\A) \to \Aut^{br}(\Z(\A)) : \alpha \mapsto \ind(\alpha),
\end{equation}
where $\ind(\alpha)(Z,\, \gamma) = (\alpha(Z),\, \gamma^\alpha)$ and   $\gamma^\alpha$
is defined by the following commutative diagram
\begin{equation} 
 \xymatrix{
 X\otimes \alpha(Z)\ar[rr]^{\gamma^\alpha_X}\ar[d] & &  \alpha(Z)\otimes X\ar[d] \\
 \alpha(\alpha^{-1}(X)) \otimes \alpha(Z)\ar[d]_{J_{\alpha^{-1}(X),Z}} & & \alpha(Z) \otimes \alpha(\alpha^{-1}(X))\ar[d]^{J_{Z,\alpha^{-1}(X)}}\\
 \alpha(\alpha^{-1}(X)\otimes Z)\ar[rr]^{\alpha(\gamma_{\alpha^{-1}(X)})} & & \alpha(Z\otimes \alpha^{-1}(X)).
 }
\end{equation}
Here $\alpha^{-1}$ is a quasi-inverse of $\alpha$ and $J_{X,Y}: \alpha(X)\ot \alpha(Z) \xrightarrow{\sim} \alpha(X\ot Z)$
is the tensor functor structure of $\alpha$. 

\subsection{The kernel of induction}

Let us recall several canonical  homomorphisms associated to a fusion category $\A$. 

Let $U(\A)$ denote the universal grading group of $\A$ and  let $U(\A)_{ab}$ denote the maximal abelian quotient
of $U(\A)$. The objects $Z$ in $\Z(\A)$ such that $Z =\be$ as an object of $\A$  form an abelian group isomorphic
to $\widehat{U(\A)}$ (i.e., the group of linear characters of $U(\A)$), see \cite{GN}.  
Indeed, central object structures on $\be$ are in bijection with tensor 
automorphisms of  the identity endofunctor of $\A$. This yields an injective homomorphism 
\begin{equation}
\label{UC to Inv}
\widehat{U(\A)} \to\Inv(\Z(\A)).
\end{equation}

Next, the forgetful functor $F:\Z(\A)\to \A$ restricts to a homomorphism between groups of invertible objects:
\begin{equation}
\label{forgetful inv}
F: \Inv(\Z(\A))  \to  \Inv(\A).	
\end{equation}

For any invertible object $X\in \A$ the conjugation by $X$ is a tensor autoequivalence of $\C$, thus there is  
a group homomorphism 
\begin{equation}
\label{conj}
\conj:
\Inv(\A)\to \Aut(\A).
\end{equation}

\begin{proposition} 
\label{sse for ker ind}
The  sequence of group homomorphisms
\begin{equation}
\label{exact sequence for ind}
\xymatrix{
	0 \ar[r] & \widehat{U(\A)} \ar[r] & \Inv(\Z(\A)) \ar[r]^{F}
	& \Inv(\A) \ar[r]^{\conj} & \Aut(\A) \ar[r]^{\ind\quad}
	& \Aut^{br}\left(\Z(\A)\right)
}
\end{equation}
is exact.
\end{proposition}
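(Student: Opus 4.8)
The plan is to verify exactness at each of the four non-trivial spots of the sequence \eqref{exact sequence for ind}, working from left to right. Exactness at $\widehat{U(\A)}$ is just the injectivity asserted in \eqref{UC to Inv}, which is recalled above. For exactness at $\Inv(\Z(\A))$, I would observe that an invertible central object $(Z,\gamma)$ lies in the image of \eqref{UC to Inv} precisely when $Z\cong\be$ in $\A$, i.e.\ precisely when $F(Z,\gamma)=\be$; since $F$ in \eqref{forgetful inv} is a group homomorphism, this says exactly $\Ker(F)=\Image(\widehat{U(\A)}\to\Inv(\Z(\A)))$.

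For exactness at $\Inv(\A)$, the first thing to check is $\ind\circ\,\conj=\id$ in the appropriate sense — no, more precisely that $\conj(X)$ induces the identity on $\Z(\A)$ for every $X\in\Inv(\A)$ that lifts to the center. I would argue both inclusions directly. If $X=F(Z,\gamma)$ for some invertible $(Z,\gamma)\in\Z(\A)$, then conjugation by $X$ is naturally isomorphic to the identity functor as a tensor functor: the half-braiding $\gamma$ provides exactly the natural isomorphism $X\ot(-)\ot X^{-1}\Rightarrow \id$ compatible with the tensor structure, so $\conj(X)$ is trivial in $\Aut(\A)$. Conversely, if $\conj(X)$ is isomorphic to the identity tensor autoequivalence, then the isomorphism $X\ot(-)\cong(-)\ot X$ it provides, together with the tensor-functoriality, is precisely the data of a half-braiding on $X$, exhibiting $X$ as $F$ of an object of $\Z(\A)$; one checks this object is invertible since $X$ is. So $\Ker(\conj)=\Image(F)$.

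The main work is exactness at $\Aut(\A)$, i.e.\ $\Ker(\ind)=\Image(\conj)$. The inclusion $\Image(\conj)\subseteq\Ker(\ind)$ should be a direct computation: for $X\in\Inv(\A)$, the induced autoequivalence $\ind(\conj(X))$ of $\Z(\A)$ sends $(Z,\gamma)$ to $(X\ot Z\ot X^{-1},\,\gamma')$, and conjugating the half-braiding diagram by the canonical isomorphisms $\gamma_X$ shows $(X\ot Z\ot X^{-1},\gamma')\cong(Z,\gamma)$ naturally and braidedly — essentially because an inner automorphism of $\A$ extends to an inner automorphism of $\Z(\A)$ by a central (hence, for the action on isomorphism classes, trivial) object. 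For the reverse inclusion, suppose $\alpha\in\Aut(\A)$ with $\ind(\alpha)\cong\id_{\Z(\A)}$. The key point is that $\Z(\A)\to\A$ is a surjection of tensor categories (the forgetful functor is dominant), so a tensor autoequivalence of $\A$ that lifts to the \emph{identity} on $\Z(\A)$ must differ from the identity by something "invisible" to the center. Concretely, $\ind(\alpha)\cong\id$ means there is a natural family of isomorphisms $\alpha(Z)\cong Z$ in $\A$ compatible with all half-braidings; specializing to free central objects (images of the adjoint $I(X)=\bigoplus$ of simples, or the algebra object generating $\Z(\A)$) and using that every object of $\A$ is a summand of $F$ of some central object, one extracts a tensor natural isomorphism between $\alpha$ and conjugation by a fixed invertible object. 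Identifying which invertible object: it is measured by how $\alpha$ fails to commute with $F$, giving an element of $\Inv(\A)$ well-defined up to $\Ker(\conj)$.

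The hard part will be this last step — producing, from the bare hypothesis that $\ind(\alpha)$ is isomorphic to the identity \emph{as a braided tensor functor}, an actual invertible object $X\in\A$ with $\alpha\cong\conj(X)$ as tensor functors. The subtlety is that "isomorphic to the identity" is a statement about isomorphism classes, so one must choose a braided natural isomorphism $\eta:\ind(\alpha)\Rightarrow\id$ and track its coherence; the candidate $X$ will emerge as the image under $F$ of the value of $\eta$ (or its inverse) on a distinguished object such as the canonical Lagrangian algebra $F$-preimage of the regular object, and verifying $X$ is invertible and that $\eta$ witnesses $\alpha\cong\conj(X)$ requires unwinding the hexagon/compatibility axioms. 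I would either do this by hand or, more cleanly, invoke the description of $\Aut(\A)$ via module categories / the $2$-group $\underline{\Aut}(\A)$ acting on $\Z(\A)$, where the statement becomes the (known) faithfulness of that action modulo inner autoequivalences; but the self-contained route through half-braidings as above is the one I would write out.
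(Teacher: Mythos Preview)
Your treatment of exactness at $\widehat{U(\A)}$, $\Inv(\Z(\A))$, and $\Inv(\A)$ is correct and matches the paper's argument (the paper calls the first two ``obvious'' and for the third observes, exactly as you do, that a tensor natural isomorphism $\conj(X)\cong\id_\A$ is the same data as a central structure on $X$).

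For exactness at $\Aut(\A)$ the paper takes a genuinely different and much shorter route. Rather than chasing half-braidings and coherence, it invokes the isomorphism $\Phi:\BrPic(\A)\xrightarrow{\sim}\Aut^{br}(\Z(\A))$ of \eqref{Phi}. Under $\Phi^{-1}$, the braided autoequivalence $\ind(\alpha)$ corresponds to the invertible $\A$-bimodule category $\A_\alpha$: this is $\A$ as a right $\A$-module, with left action twisted by $\alpha$, i.e.\ $X\cdot V=\alpha(X)\ot V$. Then $\ind(\alpha)$ is trivial iff $\A_\alpha\cong\A$ as $\A$-bimodule categories. Any right $\A$-module equivalence $\A\to\A_\alpha$ is left tensoring by some invertible $X\in\A$, and compatibility with the left actions forces $\alpha\cong\conj(X)$. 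This handles both inclusions at once and avoids all coherence bookkeeping; the bimodule-category alternative you mention in passing is precisely what the paper does.

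Your direct approach, by contrast, has a gap as written. You say the invertible $X$ ``will emerge as the image under $F$ of the value of $\eta$ on a distinguished object,'' but $\eta$ is a natural isomorphism, so its value on an object is a \emph{morphism} in $\Z(\A)$, not an object; it is not clear how you mean to extract an invertible object of $\A$ from this. The surrounding remarks (dominance of $F$, specializing to $I(\be)$, ``how $\alpha$ fails to commute with $F$'') do not pin down a mechanism either: note that $F\circ\ind(\alpha)\cong\alpha\circ F$ holds for \emph{every} $\alpha$, so there is no failure of commutation to measure. A direct argument along these lines may be completable, but it would require substantially more than you have sketched, and the bimodule translation is both shorter and already available.
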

\begin{proof}
The exactness at $\Inv(\Z(\A))$ is obvious.  To see that  the sequence is exact at  $\Inv(\A)$ observe
that a natural tensor isomorphism between the conjugation functor 
\[
V \mapsto X\ot V \ot X^*
\]
and $\id_\A$ is the same thing as a central
structure on $X$. It remains to  establish exactness at  $\Aut(\A)$. 
For $\alpha\in \Aut(\A)$ let $\A_\alpha$ denote  the invertible $\A$-bimodule category 
corresponding to the induced autoequivalence $\ind(\alpha) \in \Aut^{br}\left(\Z(\A)\right)$ 
under isomorphism \eqref{Phi}. This category  is equivalent to the 
regular category $\A$ as a right $\A$-module category and left action of $\A$ on $\A_\alpha$ is given by
\begin{equation}
\label{C alpha}
(X,\, V) \mapsto \alpha(X) \ot V,
\end{equation}
for all $X\in \A$ and $V\in \A_\alpha$, 
see \cite[Example 6.4]{NR}.  Braided autoequivalence $\ind(\alpha)$ is trivial if and only if there is an $\A$-bimodule
equivalence between $\A_\alpha$ and $\A$. Such an equivalence is given by $V\mapsto X\ot V$ for an invertible
object $X$ in $\A$ such that $\alpha$ is the conjugation by $X$. Thus, the result follows from isomorphism~\eqref{Phi}.
\end{proof}

\begin{remark}
The exact sequence \eqref{exact sequence for ind} can also be obtained by combining two exact sequences
from \cite[Theorem 1.3]{GP}. 
\end{remark}

\subsection{The image of induction}
\label{section 3.3}

Let $\C$ be a non-degenerate braided fusion category and let $A$ be a {\em Lagrangian algebra} in $\C$.
By definition \cite{DMNO}, this means that $A$ is a connected \'etale 
(i.e., commutative and separable) algebra in $\C$ such that $\FPdim(A)^2=\FPdim(\C)$.
Let $\C_A$ be the category of left $A$-modules in $\C$. It is a fusion category with tensor product $\ot_A$.

There is a canonical braided tensor equivalence 
\begin{equation}
\label{iotaA}
\iota_A : \C \xrightarrow{\sim}  \Z(\C_A) : Z \mapsto A\ot Z.
\end{equation}
Let 
\begin{equation}
\label{ind}
\ind_A : \Aut(\C_A) \to \Aut^{br}(\Z(\C_A))
\end{equation} 
denote the induction homomorphism.

\begin{theorem}
\label{alphaA=A}
Let $\alpha$ be a braided tensor autoequivalence of $\C$.  The following condition are equivalent:
\begin{enumerate}
\item[(i)] there is an algebra isomorphism $\alpha(A) \cong A$,
\item[(ii)] there is $\gamma\in \Aut(\C_A)$ such that  $\alpha = \iota_A^{-1} \circ \ind_A(\gamma) \circ \iota_A$. 
\end{enumerate}
\end{theorem}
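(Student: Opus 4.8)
The plan is to exploit the dictionary between Lagrangian algebras in a non-degenerate braided category $\C$ and pairs (fusion category $\C_A$, braided equivalence $\C\cong\Z(\C_A)$), and to translate the statement ``$\alpha$ comes from an autoequivalence of $\C_A$ via $\iota_A$'' into the statement ``$\alpha$ fixes $A$ up to algebra isomorphism.'' First I would prove the easy implication $(ii)\Rightarrow(i)$: if $\alpha=\iota_A^{-1}\circ\ind_A(\gamma)\circ\iota_A$ for some $\gamma\in\Aut(\C_A)$, then I compute $\alpha(A)=\iota_A^{-1}(\ind_A(\gamma)(\iota_A(A)))$. Now $\iota_A(A)=A\ot A$ carries the canonical Lagrangian algebra structure in $\Z(\C_A)$ corresponding to the forgetful functor $\Z(\C_A)\to\C_A$ (its image under the forgetful functor is the tensor unit's free module, i.e. the object classifying $\C_A$-modules); more precisely $\iota_A$ sends $A$ to the canonical Lagrangian algebra $I(\be)$ of $\Z(\C_A)$ where $I$ is right adjoint to the forgetful functor. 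Since $\gamma$ is a tensor autoequivalence of $\C_A$ and $\ind_A(\gamma)$ is compatible with the forgetful functor (by the very definition of $\ind$ in diagram~(3), $F\circ\ind_A(\gamma)=\gamma\circ F$), it follows that $\ind_A(\gamma)$ sends the canonical Lagrangian algebra of $\Z(\C_A)$ to an isomorphic algebra: $\ind_A(\gamma)(I(\be))\cong I(\gamma(\be))\cong I(\be)$ as algebras. Transporting back through $\iota_A^{-1}$ gives $\alpha(A)\cong A$ as algebras.

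The substantive direction is $(i)\Rightarrow(ii)$. Suppose $\alpha(A)\cong A$ as algebras via an isomorphism $f\colon\alpha(A)\xrightarrow{\sim}A$. The idea is: $\alpha$ being a braided equivalence induces an equivalence of module categories $\alpha_*\colon\C_{\alpha(A)}\xrightarrow{\sim}(\alpha\C)_A = \C_A$ between categories of modules (pushforward of modules along a braided equivalence, applied to the algebra $\alpha(A)$), and composing with the equivalence $\C_{\alpha(A)}\cong\C_A$ induced by the algebra isomorphism $f$ produces a tensor autoequivalence $\gamma:=f_*\circ\alpha_*$ of $\C_A$. I would then check that the square relating $\iota_A$, $\alpha$, $\ind_A(\gamma)$ commutes: both $\iota_A\circ\alpha$ and $\ind_A(\gamma)\circ\iota_A$ are braided tensor equivalences $\C\to\Z(\C_A)$, and on an object $Z\in\C$ one computes $\iota_A(\alpha(Z))=A\ot\alpha(Z)$ while $\ind_A(\gamma)(\iota_A(Z))=\ind_A(\gamma)(A\ot Z)=\gamma(A\ot Z)$ with the transported half-braiding; the algebra isomorphism $f$ together with the definition of $\gamma$ identifies $\gamma(A\ot Z)$ with $\alpha(A)\ot\alpha(Z)\cong A\ot\alpha(Z)$ naturally in $Z$, and one verifies this isomorphism intertwines the half-braidings using the naturality diagram~(3) and the fact that $\alpha$ is braided. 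This yields a natural isomorphism of functors $\iota_A\circ\alpha\cong\ind_A(\gamma)\circ\iota_A$; checking it is an isomorphism of braided tensor functors (compatibility with tensor structures and braidings) gives $\alpha=\iota_A^{-1}\circ\ind_A(\gamma)\circ\iota_A$ in $\Aut^{br}(\C)$.

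The main obstacle I expect is the bookkeeping in the second direction: making precise the pushforward-of-modules construction $\C_{\alpha(A)}\to\C_A$ and verifying that the resulting $\gamma\in\Aut(\C_A)$ is genuinely a \emph{tensor} autoequivalence (not merely an abelian-category equivalence), together with the verification that the natural isomorphism $\iota_A\circ\alpha\cong\ind_A(\gamma)\circ\iota_A$ respects half-braidings. A cleaner route, which I would pursue in parallel, is to avoid modules altogether and argue purely via Lagrangian algebras: by the correspondence in Section~\ref{section 2} (the discussion around~\eqref{C=ZVecGw} and~\eqref{iotaA}), a Lagrangian algebra $B$ in $\C$ together with the equivalence $\C\cong\Z(\C_B)$ is determined up to isomorphism by the algebra $B$; so an algebra isomorphism $\alpha(A)\cong A$ gives a compatible braided equivalence $\C_{\alpha(A)}\cong\C_A$, hence $\C_A\cong\C_{\alpha(A)}$, and the autoequivalence $\gamma$ is read off from the two presentations $\C\cong\Z(\C_A)$ (namely $\iota_A$) and $\C\xrightarrow{\alpha}\C\cong\Z(\C_A)$ of $\C$ as a center; the identity $\ind_A(\gamma)=\iota_A\circ\alpha\circ\iota_A^{-1}$ then expresses exactly that two braided equivalences to the same center differ by an induced autoequivalence of the base, which is the content of the uniqueness statement for realizing $\C$ as a center. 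Either way, the conceptual core is that $\iota_A$ identifies ``the Lagrangian algebra $A$'' inside $\C$ with ``the canonical Lagrangian algebra'' inside $\Z(\C_A)$, and the latter is manifestly preserved by everything in the image of $\ind_A$.
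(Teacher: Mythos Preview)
Your proposal is correct and follows essentially the same approach as the paper: for $(ii)\Rightarrow(i)$ you use that $\iota_A$ identifies $A$ with the canonical Lagrangian algebra $I_A(\be)$ of $\Z(\C_A)$, which is preserved by $\ind_A(\gamma)$ via the compatibility $F_A\circ\ind_A(\gamma)\cong\gamma\circ F_A$ and its adjoint; for $(i)\Rightarrow(ii)$ you construct $\gamma$ exactly as the paper does, by transporting $A$-module structures along $\alpha$ and the algebra isomorphism $\alpha(A)\cong A$, and then verify $\iota_A\circ\alpha\cong\ind_A(\gamma)\circ\iota_A$ on objects $A\ot Z$. (One small slip: the natural direction of your pushforward is $\alpha_*\colon\C_A\to\C_{\alpha(A)}$, not the reverse, but composing with the equivalence induced by $f$ still gives the correct $\gamma$.)
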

\begin{proof}
Suppose that there is an algebra isomorphism $\phi: \alpha(A) \xrightarrow{\sim} A$. 
Define an autoequivalence $\gamma \in \Aut(\C_A)$ as follows. Given an $A$-module $X$ in $\C$
with the action $p:A\ot X \to X$  we set $\gamma(X) =\alpha(X)$ as an object in $\C$, with the action
\[
A \ot \gamma(X) =  A \ot \alpha(X) \xrightarrow{\phi^{-1}\ot \id_{\alpha(X)}} 
\alpha(A)\ot \alpha(X) \cong \alpha(A\ot X) \xrightarrow{\alpha(p)} \alpha(X).
\]
Then $\iota_A  \alpha  \iota_A^{-1} (A \ot Z)  \cong A \ot \alpha(Z) = \gamma(A\ot Z)$ as $A$-modules
and its central structure is determined by that of $Z $. This means that 
$\iota_A \circ \alpha\circ \iota_A^{-1} =\ind_A(\gamma)$.

Conversely, suppose that $\alpha = \iota_A^{-1} \circ \ind_A(\gamma) \circ \iota_A$ for some $\gamma\in \Aut(\C_A)$.
Let $F_A: \Z(\C_A)\to \C_A$ be the forgetful functor and let $I_A:\C_A\to \Z(\C_A)$ be its right adjoint.
Note that $I_A(\be)$ is a Lagrangian algebra in $ \Z(\C_A)$ and $A\cong \iota_A^{-1}(I_A(\be))$. 
For any $\gamma\in  \Aut(\C_A)$ we have a natural tensor isomorphism 
\begin{equation}
\label{tensoriality}
F_A \circ \ind(\gamma) \cong  \gamma \circ F_A.
\end{equation}
Taking adjoints of both sides and replacing $\gamma$ by its inverse we obtain 
natural isomorphism $\ind(\gamma) \circ I_A \cong I_A \circ \gamma$ satisfying a  multiplicative property
corresponding to \eqref{tensoriality} being an isomorphism of  tensor functors.  
Applying both sides of the last isomorphism to $\be$ we obtain  an algebra isomorphism
\[
\ind(\gamma) \left( I_A(\be) \right) \cong  I_A(\be),
\]
which is equivalent to $\alpha(A)\cong A$. 
\end{proof}

\begin{remark}
It follows from Theorem~\ref{alphaA=A} that 
the image of the induction homomorphism \eqref{ind} is
the stabilizer of (the isomorphism class of) 
a Lagrangian algebra $A\in\C$ in $\Aut^{br}(\C)$.
\end{remark}

\subsection{Orbits of the action of the Brauer-Picard group on the categorical Lagrangian Grassmannian}
\label{section 3.4}

Let $\A$ be a fusion category. Denote by $\L(\A)$ the groupoid  of Lagrangian algebras in $\Z(\A)$.
It is known that $\L(\A)$  is equivalent to the groupoid of indecomposable $\A$-module categories via $A\mapsto \Z(\A)_A$.
The group $\Aut^{br}(\Z(\A))$ (i.e., the Brauer-Picard group of $\A$) acts on $\L(\A)$ in an obvious way. 

\begin{remark}
The groupoid $\L(\A)$ can be thought of as a categorical analogue of a Lagrangian Grassmannian. 
In \cite{NR} the action of $\Aut^{br}(\Z(\A))$  on the subset of $\L(\A)$  consisting of  algebras coming 
from Lagrangian subcategories of $\Z(\A)$ was considered.
 \end{remark}
 
 The following result extends Theorem~\ref{alphaA=A} from the previous section.
 
 \begin{proposition}
 \label{orbits}
 Let $A$ and $B$ be Lagrangian algebras in $\Z(\A)$.  There exists a tensor equivalence  $\Z(\A)_A \cong \Z(\A)_B$ if and only if
 there exists  $\alpha\in \Aut^{br}(\Z(\A))$ such that $\alpha(A) \cong B$ as algebras in $\Z(\A)$. 
 \end{proposition}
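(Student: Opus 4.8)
The proof should reduce Proposition~\ref{orbits} to Theorem~\ref{alphaA=A} by first passing from the equivalence $\Z(\A)_A \cong \Z(\A)_B$ of fusion categories to an equivalence of their centers, and then using the canonical identifications $\iota_A,\iota_B$ of Section~\ref{section 3.3} to transport this into an autoequivalence of $\Z(\A)$ that moves $A$ to $B$. One direction is immediate: if $\alpha\in\Aut^{br}(\Z(\A))$ satisfies $\alpha(A)\cong B$ as algebras, then $\alpha$ induces an equivalence $\Z(\A)_A \xrightarrow{\sim} \Z(\A)_B$ of module categories (sending an $A$-module $X$ with action $p$ to $\alpha(X)$ with action twisted by the algebra isomorphism $\alpha(A)\cong B$, exactly as in the construction of $\gamma$ in the proof of Theorem~\ref{alphaA=A}), and this is in particular a tensor equivalence for the respective $\ot_A$, $\ot_B$ structures.

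For the converse, suppose $G:\Z(\A)_A \xrightarrow{\sim} \Z(\A)_B$ is a tensor equivalence. First I would observe that $G$ induces a braided tensor equivalence $\Z(G):\Z(\Z(\A)_A)\xrightarrow{\sim}\Z(\Z(\A)_B)$ on centers, since taking the center is functorial with respect to tensor equivalences. Now invoke the canonical braided equivalences $\iota_A:\Z(\A)\xrightarrow{\sim}\Z(\Z(\A)_A)$ and $\iota_B:\Z(\A)\xrightarrow{\sim}\Z(\Z(\A)_B)$ from~\eqref{iotaA} (applied with $\C=\Z(\A)$, which is non-degenerate, so $A,B$ are Lagrangian algebras in it). Then
\begin{equation}
\label{alpha from G}
\alpha := \iota_B^{-1}\circ \Z(G)\circ \iota_A
\end{equation}
is a braided tensor autoequivalence of $\Z(\A)$. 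It remains to show $\alpha(A)\cong B$ as algebras. For this I would use, as in the second half of the proof of Theorem~\ref{alphaA=A}, that $A\cong \iota_A^{-1}(I_A(\be))$ where $I_A$ is the right adjoint of the forgetful functor $F_A:\Z(\Z(\A)_A)\to \Z(\A)_A$, and similarly for $B$. Since $G$ is a tensor equivalence, it intertwines the forgetful functors, $F_B\circ \Z(G)\cong G\circ F_A$, hence (taking right adjoints) $\Z(G)\circ I_A\cong I_B\circ G$ as functors, with a multiplicative structure making these algebra isomorphisms; evaluating at $\be$ and using $G(\be)\cong\be$ gives $\Z(G)(I_A(\be))\cong I_B(\be)$ as algebras. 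Transporting through $\iota_A,\iota_B$ yields $\alpha(A)\cong B$.

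\textbf{Main obstacle.} The only genuinely technical point is the functoriality of the center under tensor equivalences together with the compatibility of the $\iota$'s: one must check that $\iota_B \circ \alpha = \Z(G)\circ \iota_A$ at the level of braided tensor functors, i.e. that the diagram of canonical equivalences commutes up to coherent isomorphism, and that the adjoint-of-forgetful description of the Lagrangian algebra is natural in the category $\Z(\A)_A$. Both facts are standard (the functor $\Z(-)$ is $2$-functorial on the $2$-category of fusion categories and tensor equivalences, and $\iota_A$ is natural by construction), so I would state them and cite \cite[Section 4.4.10]{DGNO2} or the discussion around~\eqref{iotaA} rather than reproving them. Given these, the proof is essentially a diagram chase identical in spirit to that of Theorem~\ref{alphaA=A}, and in fact one could phrase the whole argument as: Theorem~\ref{alphaA=A} characterizes $\Image(\ind_A)$ inside $\Aut^{br}(\Z(\A))$ (after the identification $\iota_A$) as $\Stab([A])$, so two Lagrangian algebras give Morita-equivalent (equivalently, $\otimes$-equivalent) dual categories precisely when they lie in the same $\Aut^{br}(\Z(\A))$-orbit.
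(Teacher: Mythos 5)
Your proposal is correct and follows essentially the same route as the paper: the paper also defines $\alpha= \iota_B^{-1} \circ \ind(\phi) \circ \iota_A$ from the induced equivalence of centers and verifies $\alpha(A)\cong B$ by the same adjoint-of-forgetful-functor argument used in Theorem~\ref{alphaA=A}, and handles the converse by twisting the module structure along the algebra isomorphism $\alpha(A)\cong B$ exactly as you describe. The only difference is cosmetic (you write $\Z(G)$ where the paper writes $\ind(\phi)$, and you spell out the diagram chase that the paper declares ``completely parallel'' to the earlier proof).
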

\begin{proof}
Suppose there is a tensor equivalence $\phi : \Z(\A)_A \to \Z(\A)_B$.  Let 
$\ind(\phi): \Z(\Z(\A)_A)\to \Z(\Z(\A)_B)$ be the induced braided tensor equivalence and let 
\[
\iota_A: \Z(\A)\to  \Z(\Z(\A)_A),\qquad \iota_B: \Z(\B)\to \Z(\Z(\A)_B)
\]
be braided equivalences defined in the previous section. Then  $\alpha= \iota_B^{-1} \circ \ind(\phi) \circ \iota_A$
is a braided autoequivalence of $\Z(\A)$ that maps $A$ to $B$ (the verification of this fact
is completely parallel to one in the proof of Theorem~\ref{alphaA=A}).

Conversely, if $\alpha\in  \Aut^{br}(\Z(\A))$ then there is a tensor equivalence 
\[
\Z(\A)_A \cong \Z(\A)_{\alpha(A)}
\]
given by $X\mapsto \alpha(X)$ for all objects $X\in \Z(\A)_A $. 
\end{proof}

\begin{remark}
\label{transitivity on L}
Proposition~\ref{orbits} implies  that orbits of the action of $\Aut^{br}(\Z(\A))$ on the set of isomorphism classes 
of Lagrangian algebras in $\Z(\A)$ are parameterized by equivalence classes of fusion categories
categorically Morita equivalent to $\A$. 

Equivalently,  we have a parameterization of orbits of  the action of $\BrPic(\A)$ on the set of equivalence classes
of indecomposable left $\A$-module categories. Namely, two $\A$-module categories belong to the same orbit
if and only if the corresponding dual fusion categories are equivalent.  A related result is established 
by Galindo and Plavnik in  \cite[Theorem 1.4]{GP}.
\end{remark}


\section{Representation categories of twisted group doubles}
\label{section 4}

Let $G$ be a finite group and let $\omega \in H^3(G,\, k^\times)$.  

\subsection{Invertible objects of  the center of a pointed fusion category}
\label{invertibles}

Let $Z(G)$ denote the center of $G$.  For  any $a\in Z(G)$ let
\begin{equation}
\label{betaa}
\beta_a  (x,\,y)= \frac{\omega(a,\,x,\, y) \omega(x,\,y,\,a)}{\omega(x,\,a,\,y)},\qquad x,y\in G. 
\end{equation}
It is known that $\beta_a$ is a $2$-cocycle and that the map 
\begin{equation}
\label{beta map}
\beta: Z(G) \to H^2(G,\,k^\times) :  a \mapsto \beta_a
\end{equation}
is a group homomorphism. 


The invertible objects of $\Z(\C(G,\,\omega))$ are well known, see, e.g., \cite{DPR}.
The exact sequence \eqref{ZC inv sequence} in the next proposition 
can be found in \cite[Example 6.2]{GP}. We include its proof for the sake of completeness. 

For any group $G$ let $\widehat{G}$ denote the group of linear characters of $G$.  

\begin{proposition}
\label{sequence for invertibles}
The following sequence
\begin{equation}
\label{ZC inv sequence}
0 \to \widehat{G} \to \Inv(\Z(\C(G,\,\omega)) \xrightarrow{F} Z(G) \xrightarrow{\beta} H^2(G,\,k^\times)
\end{equation}
is exact. Here $F$ is given by the forgetful functor. 
\end{proposition}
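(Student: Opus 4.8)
The plan is to identify all three maps concretely and verify exactness at each non-trivial spot, using the explicit description of objects of $\Z(\C(G,\,\omega))$ from \cite{DPR}. Recall that a simple object of $\Z(\C(G,\,\omega))$ supported at a conjugacy class is given by an element $g\in G$ together with a projective representation of the centralizer $C_G(g)$ with cocycle $\beta_g$; such an object is invertible precisely when its underlying graded vector space is one-dimensional, which forces $g\in Z(G)$ and the projective representation to be one-dimensional. Thus an invertible object of $\Z(\C(G,\,\omega))$ is a pair $(a,\,\chi)$ where $a\in Z(G)$ and $\chi:G\to k^\times$ is a function with $\chi(x)\chi(y)=\beta_a(x,y)\,\chi(xy)$, i.e. $\chi$ trivializes the cocycle $\beta_a$. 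The forgetful functor $F$ sends $(a,\,\chi)$ to the object $a\in\Inv(\C(G,\,\omega))=G$, and its image visibly lands in $Z(G)$ (this also follows abstractly from \eqref{forgetful inv}: $F$ lands in invertible objects, and an invertible central object must be central in $\C(G,\,\omega)_{pt}=\C(G,\,\omega)$, forcing $a\in Z(G)$).

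Exactness at $\widehat{G}$ and at $\Inv(\Z(\C(G,\,\omega)))$: the inclusion $\widehat{G}\hookrightarrow\Inv(\Z(\C(G,\,\omega)))$ is the special case of \eqref{UC to Inv} — a character $\chi\in\widehat G$ gives the pair $(1,\,\chi)$, since $\beta_1$ is the trivial cocycle and $U(\C(G,\,\omega))=G$ (for pointed categories the universal grading group is $G$ itself). This is injective because distinct characters give non-isomorphic central structures on $\be$. Its image is exactly $\ker F$: if $F(a,\,\chi)=a$ is trivial in $G$, then $a=1$, and then $\chi$ is an ordinary character of $G$; conversely every $(1,\,\chi)$ is killed by $F$. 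So the sequence is exact at $\Inv(\Z(\C(G,\,\omega)))$.

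Exactness at $Z(G)$: I must show $\Image(F)=\ker(\beta)$, i.e. that $a\in Z(G)$ lifts to an invertible central object iff $\beta_a$ is a coboundary. This is immediate from the description above: an invertible central object over $a$ is precisely a trivialization $\chi$ of $\beta_a$, and such a $\chi$ exists iff $[\beta_a]=0$ in $H^2(G,\,k^\times)$. It remains only to record the two facts quoted in Section~\ref{invertibles} as known — that $\beta_a$ defined by \eqref{betaa} is a $2$-cocycle, and that $a\mapsto\beta_a$ is a homomorphism $Z(G)\to H^2(G,\,k^\times)$ — which is where the pentagon/cocycle identity for $\omega$ enters; these are standard (they are part of the half-braiding coherence in \cite{DPR}) and I would cite them rather than re-derive them. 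The main obstacle, such as it is, is purely bookkeeping: matching the abstract maps \eqref{UC to Inv} and \eqref{forgetful inv} of Proposition~\ref{sse for ker ind} with the concrete pairs $(a,\,\chi)$ so that the claimed sequence is literally the start of \eqref{exact sequence for ind} specialized to $\A=\C(G,\,\omega)$; once that identification is made, exactness at every term follows from Proposition~\ref{sse for ker ind} together with the one new input that the cokernel of $F:\Inv(\Z(\C(G,\,\omega)))\to Z(G)$ is detected by $\beta$.
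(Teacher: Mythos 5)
Your proof is correct and follows essentially the same route as the paper: the paper's (much terser) argument likewise identifies central structures on $\be$ with linear characters of $G$, observes that an invertible object admitting a central structure must correspond to an element of $Z(G)$, and notes that $a\in Z(G)$ lifts if and only if $\beta_a$ is a coboundary. Your version simply makes the pairs $(a,\,\chi)$ and the verification of exactness at each term explicit.
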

\begin{proof}
The central structures on the identity object of $\C(G,\,\omega)$ are parameterized by
linear characters of $G$. It is clear that in order to have a central structure
the invertible object of $\C(G,\,\omega)$  must correspond to an element of $Z(G)$.  Finally, it follows
from \eqref{betaa} that $a\in Z(G)$ admits a central structure if and only if $\beta_a$ is a coboundary. 
\end{proof}

\begin{remark}
Proposition~\ref{sequence for invertibles} appears (in a different form) in \cite[Proposition 5.2]{MN2}.
It can also be derived from the exact sequence \eqref{exact sequence for ind}.
\end{remark}

\begin{corollary}
\label{ZC  pointed}
The category $\Z(\C(G,\,\omega))$ is pointed if and only if $G$ is abelian and $\beta$ is zero. 
\end{corollary}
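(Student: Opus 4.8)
The plan is to reduce the statement to a Frobenius--Perron dimension count and then feed in Proposition~\ref{sequence for invertibles}. Recall that a fusion category $\D$ is pointed if and only if all of its simple objects are invertible, and this happens if and only if $|\Inv(\D)| = \FPdim(\D)$: the invertible simple objects each contribute $1$ to the sum $\sum_{X}\FPdim(X)^2 = \FPdim(\D)$ over simples, while any non-invertible simple contributes strictly more. Applying this to $\D = \Z(\C(G,\,\omega))$ and using the standard identities $\FPdim(\Z(\A)) = \FPdim(\A)^2$ and $\FPdim(\C(G,\,\omega)) = |G|$, the first step is to record that $\Z(\C(G,\,\omega))$ is pointed if and only if $|\Inv(\Z(\C(G,\,\omega)))| = |G|^2$.

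Next I would compute $|\Inv(\Z(\C(G,\,\omega)))|$ directly from the exact sequence \eqref{ZC inv sequence}. Exactness there yields a short exact sequence
\[
0 \to \widehat{G} \to \Inv(\Z(\C(G,\,\omega))) \to \Ker\bigl(\beta : Z(G) \to H^2(G,\,k^\times)\bigr) \to 0,
\]
and hence $|\Inv(\Z(\C(G,\,\omega)))| = |\widehat{G}|\cdot|\Ker\beta| = |G/[G,G]|\cdot|\Ker\beta|$, using that $\widehat{G}$ is the character group of the abelianization $G/[G,G]$.

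Finally I would insert the two obvious bounds and conclude. We have $|\widehat{G}| = |G/[G,G]| \le |G|$, with equality precisely when $G$ is abelian; and $\Ker\beta$ is a subgroup of $Z(G)$, so $|\Ker\beta| \le |Z(G)| \le |G|$, with $|\Ker\beta| = |G|$ precisely when $Z(G) = G$ (that is, $G$ abelian) and $\beta$ is the trivial homomorphism. Since each of the two factors in the product $|G/[G,G]|\cdot|\Ker\beta|$ is a positive integer at most $|G|$, this product equals $|G|^2$ if and only if both factors equal $|G|$, i.e.\ if and only if $G$ is abelian and $\beta$ is zero. Combined with the first paragraph this is exactly the assertion. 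I do not expect a genuine obstacle here; the only care needed is the elementary bookkeeping of the preceding sentence, namely translating the equality of orders into the two conditions on $G$ and on $\beta$.
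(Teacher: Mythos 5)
Your argument is correct and is essentially the derivation the paper intends: the corollary is stated without proof as an immediate consequence of Proposition~\ref{sequence for invertibles}, and your Frobenius--Perron dimension count ($|\Inv(\Z(\C(G,\omega)))| = |\widehat{G}|\cdot|\Ker\beta|$ versus $\FPdim(\Z(\C(G,\omega))) = |G|^2$) is the standard way to make that deduction explicit. No gaps.
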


\begin{corollary}
\label{ZC  antipointed}
Suppose that $G$ is abelian.  Then $\Z(\C(G,\,\omega))_{pt}$ is Lagrangian if and only if $\beta$ is injective. 
\end{corollary}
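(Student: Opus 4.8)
The plan is to analyze the exact sequence \eqref{ZC inv sequence} under the hypothesis that $G$ is abelian, and translate the condition on $\beta$ into a statement about the pointed part of the center. When $G$ is abelian we have $Z(G)=G$, so \eqref{ZC inv sequence} becomes
\[
0 \to \widehat{G} \to \Inv(\Z(\C(G,\,\omega))) \xrightarrow{F} G \xrightarrow{\beta} H^2(G,\,k^\times).
\]
First I would observe that $\Inv(\Z(\C(G,\,\omega)))$, equipped with the restriction of the braiding, is precisely the pointed subcategory $\Z(\C(G,\,\omega))_{pt}$, since $G$ abelian forces all invertible objects to have trivial associativity obstructions among themselves and the pointed part is the fusion subcategory generated by invertibles. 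Its Frobenius–Perron dimension equals $|\Inv(\Z(\C(G,\,\omega)))|$, while $\FPdim(\Z(\C(G,\,\omega))) = |G|^2$, so a Lagrangian pointed subcategory must have exactly $|G|$ invertible objects, i.e. $|\Inv(\Z(\C(G,\,\omega)))| = |G|$.

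Next I would run the dimension count through the exact sequence. Exactness gives
\[
|\Inv(\Z(\C(G,\,\omega)))| = |\widehat{G}| \cdot |\Image(F)| = |G| \cdot |\Ker(\beta)| / |G| \cdot |G|
\]
— more carefully, $|\Inv(\Z(\C(G,\,\omega)))| = |\widehat{G}|\cdot|\Image F| = |G|\cdot|\Ker\beta|$, using $|\widehat G| = |G|$ for $G$ finite abelian and $\Image F = \Ker\beta$. Hence $|\Inv(\Z(\C(G,\,\omega)))| = |G|$ if and only if $|\Ker\beta| = 1$, i.e. $\beta$ is injective. So the numerical condition ``$\Z(\C(G,\,\omega))_{pt}$ has exactly $|G|$ invertibles'' is equivalent to injectivity of $\beta$.

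The remaining point, and the one requiring a little care, is to upgrade ``has the right dimension'' to ``is Lagrangian,'' i.e. to check that a pointed fusion subcategory $\L \subseteq \Z(\C(G,\,\omega))$ with $\FPdim(\L)^2 = \FPdim(\Z(\C(G,\,\omega)))$ is automatically Tannakian and equal to its own Müger centralizer. That $\L = \L'$ follows from the dimension formula $\FPdim(\L)\cdot\FPdim(\L') = \FPdim(\Z(\C(G,\,\omega)))$ valid for a fusion subcategory of a non-degenerate braided category, combined with $\L \subseteq \L'$ which holds because $\beta$ injective forces the braiding on $\L$ to be symmetric — indeed the only invertible objects admitting a central structure are those on which the relevant self-braiding obstruction vanishes, and one checks the induced quadratic form on $\Inv(\Z(\C(G,\,\omega)))$ is trivial exactly when $\beta$ is injective (each invertible object here comes from a character, which is bosonic). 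Finally, a symmetric pointed fusion category over an algebraically closed field of characteristic zero is Tannakian as soon as it contains no fermions, and the no-fermion condition is again encoded by the injectivity of $\beta$; so $\L$ is Tannakian, hence Lagrangian. The main obstacle is this last step: verifying that the braiding restricted to $\Inv(\Z(\C(G,\,\omega)))$ is symmetric and Tannakian precisely under the stated hypothesis, rather than merely symmetric, which is why one must track the quadratic form and not just the dimension. Conversely, if $\beta$ is not injective the dimension count above shows $\FPdim(\Z(\C(G,\,\omega))_{pt}) > |G|$, so $\Z(\C(G,\,\omega))_{pt}$ cannot be Lagrangian, completing the equivalence.
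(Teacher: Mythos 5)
Your proof is correct, but it takes a longer route than the paper's. The paper's argument is essentially one line: $\Z(\C(G,\omega))_{pt}$ \emph{always} contains the Lagrangian subcategory $\L=\Rep(G)$ of central objects supported on $\be$, so (by the dimension constraint $\FPdim(\L)^2=\FPdim(\Z(\C(G,\omega)))$, which forbids one Lagrangian subcategory from properly containing another) the pointed part is Lagrangian if and only if it equals $\L$, i.e.\ if and only if the forgetful map $F$ on invertibles is trivial, i.e.\ if and only if $\beta$ is injective by exactness of \eqref{ZC inv sequence}. You instead run the order count $|\Inv(\Z(\C(G,\omega)))|=|\widehat{G}|\cdot|\Ker\beta|$ through the exact sequence and then re-verify the Lagrangian axioms (symmetry of the braiding, $\L=\L'$ via $\FPdim(\L)\FPdim(\L')=\FPdim(\C)$, and Tannakian-ness via absence of fermions) by hand. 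All of these steps are valid, but the last two are unnecessary once you notice that injectivity of $\beta$ forces every invertible object of the center to be supported on $\be$, so that the pointed part is literally $\Rep(G)$ with its standard symmetric structure --- which is the paper's starting observation. Your appeal to ``the quadratic form is trivial exactly when $\beta$ is injective'' is also stated more strongly than what you use or need: the only direction required (and the only one that is clearly true) is that injectivity of $\beta$ makes all invertibles characters, hence bosonic. The converse direction of the corollary is handled identically in both arguments, by the dimension count.
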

\begin{proof}
Note that $\Z(\C(G,\,\omega))_{pt}$ contains the  Lagrangian subcategory $\L=\Rep(G)$ 
consisting of central objects supported on $\be$. 
Clearly, $\Z(\C(G,\,\omega))_{pt} =\L$ if and only if the forgetful homomorphism
$F: \Inv(\Z(\C))\to \Inv(\C)$ is trivial, i.e., if and only if $\beta$ is injective. 
\end{proof}

\subsection{Image of induction in the pointed case}
\label{ptd case}
Let $G$ be a finite group and let $\omega\in H^3(G,\,k^\times)$. 

Let $\OutStab(\omega)= \Stab(\omega)/\Inn(G)$ denote the subgroup of $\Out(G)$ consisting 
of classes of automorphisms $a$ such that $\omega\circ (a\times a\times a)$ is cohomologous to $\omega$. 

Let $I(G,\,\omega)$ be the image of induction $\Aut(\C(G,\,\omega))\to \Aut^{br}(\Z(\C(G,\,\omega)))$. 

\begin{proposition}
\label{ptd image of induction}
There is an exact sequence
\begin{equation}
\label{Sequence for IG}
Z(G) \xrightarrow{\beta}  H^2(G,\, k^\times) \xrightarrow{\ind} I(G,\,\omega) \to  \OutStab(\omega) \to 0
\end{equation}
\end{proposition}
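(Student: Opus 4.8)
The plan is to deduce the sequence \eqref{Sequence for IG} from two exact sequences that are already at hand: the description \eqref{AutCGw} of $\Aut(\C(G,\,\omega))$ as an extension of $\Stab(\omega)$ by $H^2(G,\,k^\times)$, and the exact sequence \eqref{exact sequence for ind} of Proposition~\ref{sse for ker ind} specialized to $\A=\C(G,\,\omega)$. Write $\pi\colon\Aut(\C(G,\,\omega))\twoheadrightarrow\Stab(\omega)$ for the projection in \eqref{AutCGw}. Since $\Inv(\C(G,\,\omega))\cong G$, exactness of \eqref{exact sequence for ind} at $\Aut(\C(G,\,\omega))$ gives $\Ker(\ind)=\conj(G)$, hence $I(G,\,\omega)=\Image(\ind)\cong\Aut(\C(G,\,\omega))/\conj(G)$, with the map $\ind$ of \eqref{Sequence for IG} being the restriction to $H^2(G,\,k^\times)$ of this quotient map.

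First I would pin down the composite $\pi\circ\conj\colon G\to\Stab(\omega)$. The conjugation autoequivalence attached to the invertible object $\delta_g\in\C(G,\,\omega)$ sends the simple object $\delta_h$ to $\delta_{ghg^{-1}}$, so its underlying automorphism of $G$ is $\Inn(g)$; thus $\pi\circ\conj$ is $g\mapsto\Inn(g)$, with image $\Inn(G)$. Consequently $\pi^{-1}(\Inn(G))=H^2(G,\,k^\times)\cdot\conj(G)$, and passing to the quotient by $\conj(G)$ gives a surjection $I(G,\,\omega)\twoheadrightarrow\Stab(\omega)/\Inn(G)=\OutStab(\omega)$ whose kernel is the image of $H^2(G,\,k^\times)$ in $I(G,\,\omega)$. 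This is exactness of \eqref{Sequence for IG} at $I(G,\,\omega)$, together with surjectivity onto $\OutStab(\omega)$.

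It remains to prove exactness at $H^2(G,\,k^\times)$, i.e. that $\Ker\big(\ind|_{H^2(G,\,k^\times)}\big)=\beta(Z(G))$. The kernel of $\ind|_{H^2(G,\,k^\times)}$ is $H^2(G,\,k^\times)\cap\conj(G)$, and $\conj(\delta_g)$ lands in $\Ker(\pi)=H^2(G,\,k^\times)$ precisely when $\Inn(g)=\id$, i.e. when $g\in Z(G)$; so this kernel equals $\conj(Z(G))$. Hence it suffices to identify $\conj|_{Z(G)}$ with $\beta$ (up to inversion) as homomorphisms $Z(G)\to H^2(G,\,k^\times)$. This is the only genuinely computational step and the main obstacle: for $a\in Z(G)$ the autoequivalence $V\mapsto\delta_a\ot V\ot\delta_a^{*}$ of $\C(G,\,\omega)$ acts as the identity on objects, and reassociating $\delta_a\ot\delta_g\ot\delta_h\ot\delta_a^{*}$ via $\omega$, together with the rigidity morphisms of $\delta_a$, equips $\id_{\C(G,\,\omega)}$ with exactly the tensor structure recorded by the $2$-cochain $\beta_a$ of \eqref{betaa}; under the parameterization of such tensor structures by $H^2(G,\,k^\times)$ in \eqref{AutCGw} this says $\conj(\delta_a)=\beta_a$. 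Granting this, $\conj(Z(G))=\beta(Z(G))$ and \eqref{Sequence for IG} is exact.

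Equivalently, one may package the whole argument through the snake lemma applied to the evident morphism from the short exact sequence $0\to Z(G)\to G\to\Inn(G)\to0$ to \eqref{AutCGw}, with vertical maps $\conj|_{Z(G)}=\beta$, $\conj$, and the inclusion $\Inn(G)\hookrightarrow\Stab(\omega)$; since the rightmost vertical map is injective, the resulting six-term exact sequence has tail
\[
0\to H^2(G,\,k^\times)/\beta(Z(G))\to I(G,\,\omega)\to\OutStab(\omega)\to0,
\]
which is exactly the assertion.
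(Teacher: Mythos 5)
Your argument is correct and follows essentially the same route as the paper: both identify $I(G,\,\omega)\cong\Aut(\C(G,\,\omega))/\Image(\conj)$ via Proposition~\ref{sse for ker ind}, observe that $\Image(\conj)$ is built from $\Inn(G)$ together with $\beta(Z(G))\subset H^2(G,\,k^\times)$, and then quotient the extension \eqref{AutCGw}. You simply make explicit the two points the paper leaves implicit (that $\pi\circ\conj$ has image $\Inn(G)$ and that $\conj|_{Z(G)}$ realizes $\beta$), which is a faithful expansion rather than a different proof.
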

\begin{proof}
Using the exact sequence \eqref{exact sequence for ind} we see that 
\[
I(G,\,\omega) \cong \Aut(\C(G,\, \omega))/\mbox{Ker}(\ind) \cong \Aut(\C(G,\, \omega))/\mbox{Im}(\conj),
\]
where $\conj$ is given by \eqref{conj}. The image of $\conj$ in $\Aut(\C(G,\, \omega))$ is 
generated by $\Inn(G) =G/Z(G)$ and the image of $\beta:Z(G)\to H^2(G,k^\times)\subset \Aut(\C(G,\, \omega))$ 
defined in \eqref{beta map} (note that  the conjugation by $a\in Z(G)$ gives rise to a non-trivial tensor autoequivalence 
of $\C(G,\, \omega)$ precisely when $\beta_a$ is non-trivial in $H^2(G,k^\times)$).  By \eqref{AutCGw},
this implies the statement.
\end{proof}

\begin{corollary}
\label{combining2seq}
We have an exact sequence
\begin{equation}
0 \to \widehat{G} \to \Inv(\Z(\C(G,\,\omega)) \xrightarrow{F} Z(G) \xrightarrow{\beta} H^2(G,\,k^\times)
\xrightarrow{\ind} I(G,\,\omega) \to  \OutStab(\omega) \to 0.
\end{equation}
\end{corollary}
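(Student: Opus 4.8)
The plan is to obtain the claimed six-term exact sequence by simply concatenating the two exact sequences already proved, observing that they overlap precisely in the segment $Z(G)\xrightarrow{\beta}H^2(G,\,k^\times)$ and use the identical homomorphism $\beta$ of \eqref{beta map}.

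First I would invoke Proposition~\ref{sequence for invertibles}, which gives exactness of
\[
0 \to \widehat{G} \to \Inv(\Z(\C(G,\,\omega)) \xrightarrow{F} Z(G) \xrightarrow{\beta} H^2(G,\,k^\times);
\]
this supplies injectivity of $\widehat{G}\to\Inv(\Z(\C(G,\,\omega)))$, exactness at $\Inv(\Z(\C(G,\,\omega)))$ (that is, $\Image(\widehat{G}\to\Inv)=\Ker F$), and exactness at $Z(G)$ (that is, $\Image F=\Ker\beta$). Next I would invoke Proposition~\ref{ptd image of induction}, which gives exactness of
\[
Z(G) \xrightarrow{\beta}  H^2(G,\, k^\times) \xrightarrow{\ind} I(G,\,\omega) \to  \OutStab(\omega) \to 0;
\]
this supplies exactness at $H^2(G,\,k^\times)$ (that is, $\Image\beta=\Ker\ind$), exactness at $I(G,\,\omega)$ (that is, $\Image\ind=\Ker(I(G,\,\omega)\to\OutStab(\omega))$), and surjectivity of $I(G,\,\omega)\to\OutStab(\omega)$.

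Since the arrow $\beta$ appearing in the two statements is one and the same map \eqref{beta map}, the sequences glue along $Z(G)\xrightarrow{\beta}H^2(G,\,k^\times)$, and the concatenation is exact at every term, which is the assertion. I do not expect any genuine obstacle here: the entire content is carried by the two cited propositions, and the corollary only records that their exactness ranges are compatible. Alternatively, one may note — as remarked after Proposition~\ref{sequence for invertibles} — that both pieces are segments of the single exact sequence \eqref{exact sequence for ind} specialized to $\A=\C(G,\,\omega)$, with $U(\C(G,\,\omega))_{ab}$ identified so that $\widehat{U(\A)}\cong\widehat{G}$; from that vantage point the statement is immediate.
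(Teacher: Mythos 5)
Your proposal is correct and matches the paper's (implicit) argument: the corollary is stated without proof precisely because it is the concatenation of Proposition~\ref{sequence for invertibles} and Proposition~\ref{ptd image of induction} along the common arrow $\beta$ of \eqref{beta map}, with exactness at $Z(G)$ supplied by the first and exactness at $H^2(G,\,k^\times)$ by the second. No further verification is needed.
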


\begin{corollary}
\label{Indsurj}
Let $\C$ be a non-degenerate braided fusion category and let $\L =\Rep(G)$
be a Lagrangian subcategory of $\C$ such that $\alpha(\L) \cong \L$ for every  $\alpha\in \Aut^{br}(\C)$.
Then $\C \cong \Z(\C(G,\, \omega))$ for some $\omega\in H^3(G,\, k^\times)$ and the induction homomorphism
\begin{equation}
\label{indsurj}
\Aut(\C(G,\, \omega)) \to   \Aut^{br}(\Z(\C(G,\, \omega)) 
\end{equation}
is surjective, i.e., $\Aut^{br}(\Z(\C(G,\, \omega))  =I(G,\,\omega)$.
\end{corollary}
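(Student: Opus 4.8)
The plan is to reduce the statement to Theorem~\ref{alphaA=A} (equivalently, the Remark following it identifying the image of induction with a stabilizer of a Lagrangian algebra) by exhibiting the relevant Lagrangian algebra and checking it is preserved by all braided autoequivalences. First I would use the Lagrangian subcategory $\L=\Rep(G)\subset\C$ to produce, as in Section~\ref{section 3.3} and the discussion around \eqref{C=ZVecGw}, the regular algebra $A=\Fun(G)$, which is a Lagrangian algebra in $\C$ with $\C_A\cong\C(G,\,\omega)$ for a suitable $\omega\in H^3(G,\,k^\times)$; by \eqref{iotaA} this gives a braided equivalence $\C\cong\Z(\C(G,\,\omega))$, establishing the first assertion. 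The key point is then that the isomorphism class of $A$ as an algebra is determined by the subcategory $\L$: indeed $A$ is characterized as the algebra $\Fun(G)$ associated to the Tannakian category $\L$ via the canonical equivalence $\L\cong\Rep(G)$, and any $\alpha\in\Aut^{br}(\C)$ restricts to a braided autoequivalence of $\L$ (since $\alpha(\L)\cong\L$ by hypothesis), hence carries $A$ to the algebra associated to $\alpha(\L)$, which is again $A$ up to algebra isomorphism because $\alpha|_\L$ is in particular an equivalence $\Rep(G)\to\Rep(G)$ and thus sends the regular algebra to an isomorphic copy of the regular algebra.

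Granting $\alpha(A)\cong A$ as algebras for every $\alpha\in\Aut^{br}(\C)$, I would invoke the equivalence (i)$\Leftrightarrow$(ii) of Theorem~\ref{alphaA=A} with $\C_A=\C(G,\,\omega)$: every $\alpha$ is of the form $\iota_A^{-1}\circ\ind_A(\gamma)\circ\iota_A$ for some $\gamma\in\Aut(\C_A)=\Aut(\C(G,\,\omega))$. Transporting along the fixed braided equivalence $\iota_A:\C\xrightarrow{\sim}\Z(\C(G,\,\omega))$, this says exactly that every braided autoequivalence of $\Z(\C(G,\,\omega))$ lies in the image of the induction homomorphism \eqref{ind}, i.e.\ $\Aut^{br}(\Z(\C(G,\,\omega)))=I(G,\,\omega)$, which is the desired surjectivity of \eqref{indsurj}.

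I expect the only genuine subtlety to be the second step — verifying carefully that $\alpha(\L)\cong\L$ as fusion subcategories forces $\alpha(A)\cong A$ as \emph{algebras}, not merely as objects. One must check that the Lagrangian algebra attached to a Tannakian Lagrangian subcategory depends only on the subcategory (this is the content of the correspondence in \cite[Section 4.4.10]{DGNO2} between Lagrangian subcategories and their regular algebras), and that a braided autoequivalence restricting to $\L$ intertwines the respective étale algebra structures; since $\alpha$ is monoidal, $\alpha(A)$ is automatically an algebra, and the hypothesis $\alpha(\L)\cong\L$ identifies it with the canonical algebra of $\L$. Once this bookkeeping is in place, the rest is a direct application of the results of Section~\ref{section 3.3}.
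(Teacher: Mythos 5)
Your proposal is correct and follows the paper's own route: the paper proves this corollary by appealing to Theorem~\ref{alphaA=A} (the image of induction is the stabilizer of the Lagrangian algebra $A=\Fun(G)$ attached to $\L$), exactly as you do, with your extra care about $\alpha(\L)\cong\L$ forcing $\alpha(A)\cong A$ as algebras being a reasonable filling-in of details the paper leaves implicit. The only point worth tightening is that "$\alpha|_\L$ is an equivalence $\Rep(G)\to\Rep(G)$, hence preserves the regular algebra" really uses that $\alpha|_\L$ is \emph{braided} (hence symmetric, hence induced by an automorphism of $G$ by uniqueness of the symmetric fiber functor), not merely a tensor equivalence.
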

\begin{proof}
This follows from Theorem~\ref{alphaA=A} and Proposition~\ref{ptd image of induction}.
\end{proof}

\subsection{Braided equivalences between representation categories of twisted group doubles}
\label{sect Deepak}

Let $G$ be a finite group and let $A \subset G$ be a normal abelian subgroup.  Let $K:=G/A$ be the quotient group,
so that there is an extension
\begin{equation}
0 \to A \to G \to K \to 0.
\end{equation}
Such an extension is determined up to an isomorphism by the action of $K$ on $G$ 
(denoted by $(x,\,a) \mapsto x\cdot a$ for $x\in K,\, a\in A$)
and the cohomology class of a $2$-cocycle
$\kappa \in Z^2(K,\, A)$, so that  elements of $G$ are identified with pairs $(a,\,x)\in A\times K$ and the
multiplication is given by
\[
(a,\,x)\, (b,\,y) = (a (x\cdot b) \kappa(x,\,y) ,\, xy),\qquad a,b\in A,\, x,y\in K.  
\]
It was shown in \cite{MN1, N} that the fusion category $\Vec_G$ is categorically Morita equivalent to 
$\Vec_{\widehat{A}\rtimes K}^\omega$, where  $\widehat{A}$ is the dual of the $K$-module $A$
and   the $3$-cocycle $\omega\in Z^3(\widehat{A}\rtimes K,\,k^\times)$
is defined by
\begin{equation}
\label{kappa rho}
\omega((\rho_1,x_1),\, (\rho_2,x_2),\, (\rho_3,x_3)) =  \rho_1(\kappa(x_2,\, x_3)),
\end{equation}
for all $\rho_1,\rho_2,\rho_3 \in \widehat{A}$ and $x_1,x_2,x_3\in K$. 

\begin{remark}
In fact, in \cite{MN1, N, U} {\em all} pairs of Morita equivalent pointed fusion categories 
(i.e., all pairs of of twisted group doubles with braided tensor equivalent representation categories) 
were classified.  In this paper we will only use the special case described above. 
\end{remark}

Thus, there exists a braided equivalence 
\begin{equation}
\label{G and AK}
\Z(\Vec_G) \cong \Z(\Vec_{\widehat{A}\rtimes K}^\omega).
\end{equation}
So for computational  purposes  the double of $G$ can be replaced by the {\em twisted} double 
of an easier group $\widehat{A}\rtimes K$.


\section{Twisted doubles of elementary abelian $p$-groups and their Brauer-Picard groups}
\label{section 5}

Let $p$ be a prime, let $n$ be a positive integer, and let $V_n$ denote the elementary abelian $p$-group of order $p^n$. 
We have $\Aut(V_n) = GL_n(\mathbb{F}_p)$.
Below we will also view $V_n$ as an $n$-dimensional vector space over the field $\mathbb{F}_p$ with $p$ elements. 
We will denote  $V_n^* = \Hom(V_n,\, \mathbb{F}_p)$  the dual vector space.

As before, for $\omega\in H^3(V_n,\, k^\times)$ we denote $\C(V_n,\, \omega)$ the category of $V_n$-graded vector spaces
with the associativity constraint twisted by $\omega$. 

For a vector space $W$ we denote 
\[
\bigwedge(W)= \bigoplus_{i=0}^\infty\, {\bigwedge}^i(W)\quad \text{and}\quad 
\Sym(W)= \bigoplus_{i=0}^\infty\, \Sym^i(W)
\]
the alternating and symmetric algebras of $W$. When $W$ has a basis $\{w_1,\dots, w_n\}$ we also write
$\bigwedge(W)=\bigwedge(w_1,\dots, w_n)$ and $\Sym(W)=\Sym(w_1,\dots, w_n)$.
\subsection{The Brauer-Picard group of $\C(V_n,\, \omega)$ when $p$ is odd}
\label{BP p odd section}
Let $p$ be an odd prime.


The cohomology ring $H^\bullet(V_n,\, \mathbb{F}_p)$ is well known (see, e.g., \cite{A}), namely
\[
H^\bullet(V_n,\, \mathbb{F}_p) = \bigwedge(x_1,\dots, x_n) \ot_{\mathbb{F}_p} \mathbb{F}_p[y_1,\dots, y_n],
\]
where $\deg(x_i)=1$ and $\deg(y_i)=2$ for all $i=1,\dots,n$. 

The cocycles representing generators $x_i$ and $y_i$ can be explicitly described as follows 
(below we identify cocycles with the cohomology classes they represent).
Define $x_i: V_n\to \mathbb{F}_p$ and $y_i: V_n \times V_n \to \mathbb{F}_p$ by
\begin{equation}
\label{x i}
x_i(v) = v_i
\end{equation}
and 
\begin{equation}
\label{y i}
y_i(u,\,v) = \begin{cases} 0 & \text{ if } u_i+v_i < p, \\ 1 & \text{ if } u_i+v_i\geq p, \end{cases}
\end{equation}
for all $v=(v_1,\dots v_n),\, u=(u_1,\dots u_n)$ in $V_n$ and $i=1,\dots n$.  
Here we view $u_i,\,v_i$ as elements of $\{0,1,\dots, p-1\}$ and add them as usual integers. 

In particular,
\begin{equation}
\label{H3 VnFp}
H^{3}(V_n,\,\mathbb{F}_p)  = {\bigwedge}^3(x_1,\dots, x_n) \,\bigoplus\, \mathbb{F}_p \langle x_i\cup y_j \mid i,j=1,\dots,n \rangle,
\end{equation}
where $\cup$ denotes the cup product. Note that the second summand in \eqref{H3 VnFp} is isomorphic to $V_n^*\ot V_n^*$
as a $GL_n(\mathbb{F}_p)$-module. 

\begin{proposition}
\label{H3 when p odd}
There is an isomorphism of $GL_n(\mathbb{F}_p)$-modules:
\begin{equation}
\label{H3Ep=odd}
H^3(V_n ,\, k^\times) \cong {\bigwedge}^3 (V_n^*) \,\bigoplus\, \Sym^2(V_n^*).
\end{equation}
\end{proposition}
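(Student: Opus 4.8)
The plan is to deduce \eqref{H3Ep=odd} from the description of the mod-$p$ cohomology ring $H^\bullet(V_n,\,\mathbb{F}_p)$ recalled above, together with the universal coefficient theorem, being careful about $GL_n(\mathbb{F}_p)$-equivariance. First I would identify $H^3(V_n,\,k^\times)$ with $H^4(V_n,\,\mathbb{Z})$ via the long exact sequence associated to $0\to\mathbb{Z}\to k\to k^\times\to 0$: since $k$ is a $\mathbb{Q}$-vector space it is cohomologically trivial for the finite group $V_n$, so the connecting map gives a $GL_n(\mathbb{F}_p)$-equivariant isomorphism $H^3(V_n,\,k^\times)\cong H^4(V_n,\,\mathbb{Z})$, and since $V_n$ is a $p$-group this is all $p$-torsion.

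Next I would pass to mod-$p$ coefficients. Because $H^\bullet(V_n,\,\mathbb{Z})$ is annihilated by $p$ in positive degrees and consists of elementary abelian groups, the universal coefficient / Bockstein sequence gives a short exact sequence $0\to H^4(V_n,\,\mathbb{Z})\to H^4(V_n,\,\mathbb{F}_p)\to H^5(V_n,\,\mathbb{Z})\to 0$, all maps $GL_n(\mathbb{F}_p)$-equivariant, with the submodule $H^4(V_n,\,\mathbb{Z})$ being exactly the image of the Bockstein $\beta\colon H^3(V_n,\,\mathbb{F}_p)\to H^4(V_n,\,\mathbb{F}_p)$. Thus $H^3(V_n,\,k^\times)\cong \Image(\beta\colon H^3\to H^4)$. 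So the problem reduces to computing the image of the Bockstein on degree-$3$ classes, as a $GL_n(\mathbb{F}_p)$-module. Using the ring structure $H^\bullet(V_n,\,\mathbb{F}_p)=\bigwedge(x_1,\dots,x_n)\otimes\mathbb{F}_p[y_1,\dots,y_n]$ together with the facts $\beta(x_i)=y_i$ and $\beta(y_i)=0$ and the derivation property of $\beta$, one computes $\beta$ on a basis of $H^3 = \bigwedge^3\langle x_i\rangle \oplus \langle x_iy_j\rangle$: $\beta(x_ix_jx_k)=y_ix_jx_k - x_iy_jx_k + x_ix_jy_k$ and $\beta(x_iy_j)=y_iy_j$. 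The first family spans a submodule of $H^4$ isomorphic to $\bigwedge^3(V_n^*)$ (the map $x_ix_jx_k\mapsto\beta(x_ix_jx_k)$ is injective and $GL_n$-equivariant, as the classes $x_ix_jx_k$ already form a copy of $\bigwedge^3 V_n^*$ and $\beta$ commutes with the $GL_n$-action), and the second family spans the submodule generated by the products $y_iy_j$, which as a $GL_n(\mathbb{F}_p)$-module is $\Sym^2(V_n^*)$ since $y_i\mapsto$ the $i$-th coordinate of $V_n^*$ identifies $\langle y_1,\dots,y_n\rangle$ with $V_n^*$ equivariantly and the $y_i$ commute. One checks these two images intersect trivially (they sit in different "weight" components of $H^4$ with respect to the bigrading by exterior/polynomial degree, which $\beta$ respects up to the fixed shift), giving the direct sum decomposition.

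I would then assemble: $H^3(V_n,\,k^\times)\cong \Image(\beta)=\Lambda\oplus\Sigma$ where $\Lambda\cong\bigwedge^3(V_n^*)$ and $\Sigma\cong\Sym^2(V_n^*)$ as $GL_n(\mathbb{F}_p)$-modules, which is exactly \eqref{H3Ep=odd}. A dimension count provides a sanity check: $\dim_{\mathbb{F}_p}H^3(V_n,\,\mathbb{F}_p)=\binom{n}{3}+n^2$ (the second summand $V_n^*\otimes V_n^*$ in \eqref{H3 VnFp} has dimension $n^2$), and $\binom{n}{3}+\binom{n+1}{2}=\binom{n}{3}+\binom{n}{2}+n$; the discrepancy $n^2-\binom{n}{2}-n=\binom{n}{2}$ is precisely the dimension of the kernel of $\beta$ on the $x_iy_j$ part, since $\beta(x_iy_j)=y_iy_j=y_jy_i=\beta(x_jy_i)$ so only the symmetric combinations survive and the $\binom{n}{2}$ antisymmetric ones die — consistent with the image being $\Sym^2$ rather than all of $V_n^*\otimes V_n^*$.

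The main obstacle I anticipate is the bookkeeping for $GL_n(\mathbb{F}_p)$-equivariance of all the identifications — in particular, verifying cleanly that the image of the $x_iy_j$-classes under $\beta$ is $\Sym^2(V_n^*)$ and not some other $n(n+1)/2$-dimensional subquotient, and that the exterior piece $\bigwedge^3(V_n^*)$ survives intact into $H^4(V_n,\,\mathbb{Z})$ rather than being partially killed. This is handled by noting that $\beta$ is natural (hence $GL_n$-equivariant) and is a derivation, so its image is automatically a $GL_n(\mathbb{F}_p)$-submodule, and then matching the two pieces with the known irreducible-constituent structure; over an odd prime $p$ the relevant Bockstein computations are standard (cf.\ \cite{A}), so the only real care needed is in the module-theoretic identification rather than in any hard cohomological input.
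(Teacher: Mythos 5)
Your argument is correct, but it reaches the decomposition by a genuinely different route from the paper. The paper uses the coefficient sequence $0\to\mathbb{F}_p\to k^\times\to k^\times\to 0$ (the second map being the $p$-th power) and presents $H^3(V_n,\,k^\times)$ as the \emph{quotient} of $H^3(V_n,\,\mathbb{F}_p)$ by the image of the connecting map $\delta\colon H^2(V_n,\,k^\times)={\bigwedge}^2(V_n^*)\to H^3(V_n,\,\mathbb{F}_p)$; that image is found by an explicit cochain computation (choosing $p$-th roots of a root of unity and tracking integer carries) to be the span of the antisymmetric combinations $x_k\cup y_l-x_l\cup y_k$, whence the quotient of $V_n^*\otimes V_n^*$ is $\Sym^2(V_n^*)$ via symmetrization. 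You instead identify $H^3(V_n,\,k^\times)\cong H^4(V_n,\,\mathbb{Z})$ with the \emph{image} of the mod-$p$ Bockstein inside $H^4(V_n,\,\mathbb{F}_p)$ and compute that image from $\beta(x_i)=y_i$, $\beta^2=0$ and the derivation property. The two computations are consistent: the kernel of $\beta$ on the span of the $x_i\cup y_j$ is exactly the paper's image of $\delta$, and both identifications are equivariant by naturality of the connecting maps. Your route avoids all cochain-level computation at the cost of invoking the standard Bockstein structure of $H^\bullet(V_n,\,\mathbb{F}_p)$ for odd $p$; the paper's route has the advantage of producing explicit cocycle representatives (e.g.\ $\omega(u,v,w)=\xi^{u_iv_jw_k}$ and $\xi^{x_i(u)y_j(v,w)}$) which are used repeatedly afterwards (Proposition~\ref{vinimageFodd}, Section~\ref{section 6}), so with your argument those representatives would still need to be matched up separately. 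Two minor caveats, neither fatal: the map $k\to k^\times$ with kernel $\mathbb{Z}$ is not literally an exponential for an abstract algebraically closed field of characteristic $0$, though a uniquely divisible cover of $k^\times$ with kernel $\mathbb{Z}$ exists and the paper implicitly uses the same identification $H^m(V_n,\,k^\times)\cong H^{m+1}(V_n,\,\mathbb{Z})$; and you should say explicitly that the chosen carry cocycle $y_i$ of \eqref{y i} really is $\beta(x_i)$, so that your identification of the $\Sym^2$ summand agrees with the one the paper uses later.
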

\begin{proof}
The automorphism of $k^\times$ given by $\xi\mapsto \xi^p$ 
yields an exact sequence of abelian groups:
\[
\xymatrix{
	0 \ar[rr] && \mathbb{F}_p \ar[rr] && k^\times \ar[rr] && k^\times \ar[rr] && 0,
}
\]
where $\mathbb{F}_p$ is identified with the group of $p$th roots of $1$ in $k$.
This, by functoriality,  yields a long exact sequence of $GL_n(\mathbb{F}_p)$-modules:
\[\xymatrix@R-2pc{
	\cdots \ar[r]  & H^{m-1}(V_n,\,\mathbb{F}_p) \ar[r] & H^{m-1}(V_n,\,k^\times) \ar[r]^{0} & H^{m-1}(V_n,\,k^\times)&
	\\
	 \ar[r]& H^{m}(V_n,\,\mathbb{F}_p) \ar[r] & H^{m}(V_n,\,k^\times) \ar[r]^{0} & H^{m}(V_n,\,k^\times)\ar[r]&\cdots
}\]
Note that the map $H^{m-1}(V_n,\,k^\times) \to H^{m-1}(V_n,\,k^\times)$ induced by taking the $p$th
power is zero since the exponent of $H^m(V_n,\,k^\times)$ is $p$.  The latter fact follows from  isomorphism
$H^m(V_n,\,k^\times) = H^{m+1}(V_n,\,\mathbb{Z})$ and  the K\"unneth formula for the direct product in cohomology.

In particular, there is a short exact sequence of $GL_n(\mathbb{F}_p)$-modules
\begin{equation}
\label{seqH2H3H3}
0\to H^{2}(V_n,\,k^\times) \xrightarrow{\delta} H^{3}(V_n,\,\mathbb{F}_p) \to H^{3}(V_n,\,k^\times)\to 0.
\end{equation}
We claim that the image of inclusion 
\begin{equation}
\label{H2toH3odd}
\delta: H^{2}(V_n,\,k^\times) = {\bigwedge}^2(V_n^*) \to H^{3}(V_n,\,\mathbb{F}_p) 
\end{equation}
is the subspace of  $\mathbb{F}_p\langle x_i\cup y_j \mid i,j=1,\dots,n\rangle \subset H^{3}(V_n,\,\mathbb{F}_p)$ 
consisting  of  cohomology classes of the form 
$\sum_{i,j=1}^n\, a_{ij}x_i\cup y_j $, where $A=\{a_{ij}\}$ is an anti-symmetric matrix over $\mathbb{F}_p$. 

Indeed, \eqref{H2toH3odd} is given by the connecting homomorphism which is explicitly computed as follows.
Fix a primitive $p$-th root of unity $\xi$ in $k$ and consider homomorphism
\[
\mathbb{F}_p \to k^\times: a\mapsto \xi^a.
\]
A class in $H^{2}(V_n,\,k^\times)$ is represented by a $2$-cocycle $\mu :V_n\times V_n \to k^\times$ given by
\[
\mu(u,\,v) =\xi^{(Au,\,v)},\qquad u,\,v\in V_n,
\]
where $A=\{a_{ij}\}$ is an $n$-by-$n$ matrix over $\mathbb{F}_p$ and $(Au,\,v) =\sum_{i,j}^n\, a_{ij}u_iv_j$
is viewed as a non-negative integer.

Let $\lambda$ be a fixed $p$th root of $\xi$ in $k$.  Take a $2$-cochain $\nu: V_n\times V_n \to k^\times$ defined by
\[
\nu(u,\,v) =\lambda^{(Au,\,v)},\qquad u,\,v\in V_n.
\]
For non-negative integers $a,\,b$  let us denote $\lbrace a,b\rbrace$ the integral part of $\frac{a+b}{p}$. 
We have 
\begin{eqnarray*}
\nu({u},{v})\nu({u'},{v}) &=& \nu({u}+{u'},{v})\,\xi^{\lbrace(A{u},{v}),(A{u'},{v})\rbrace}, \\
\nu({u},{v})\nu({u},{v'}) &=&\nu({u},{v}+{v'})\,\xi^{\lbrace(A{u},{v}),(A{u},{v'})\rbrace},
\end{eqnarray*}
for all $u,\,u',\,v,\,v'\in V_n$. Using these identities we compute the differential of $\nu$:
\[
	d\nu({u},{v},{w})
	=\frac{\nu({u}+{v},{w})\nu({u},{v})}
	{\nu({u},{v}+{w})\nu({v},{w})}
	= \frac{\xi^{\lbrace (A{u},{v}),(A{u},{w}) \rbrace}}
	{\xi^{\lbrace (A{u},{w}),(A{v},{w}) \rbrace}}, \qquad u,\,v,\,w\in V_n.
\]
Fix $k,\,l\in\{1,2,\dots,n\}$ and take $A$ such that  $a_{ij}=1$ if $i=k$ and $j=l$ and $a_{ij}=0$ otherwise.
Then the previous calculation yields
\[
d\nu({u},{v},{w}) =\frac{\xi^{u_k\lbrace v_l,w_l\rbrace}} {\xi^{w_l\lbrace u_k,v_k\rbrace}}.
\]
Since $d\nu(u,\,v,\,w) =\xi^{\delta(\mu)(u,v,w)}$, we conclude that in this case
\[
\delta(\mu) (u,\,v,\,w)  = u_k\lbrace v_l,w_l\rbrace - w_l\lbrace u_k,v_k\rbrace. 
\]
Comparing this with \eqref{x i} and \eqref{y i} (note that $y_i(u,\,v) =\lbrace u_i,\, v_i\rbrace$)
we conclude that the image of $\delta$ is  spanned by 
\begin{equation}
\label{span of image odd}
x_k\cup y_l - x_l \cup y_k,\, k,l=1,\dots,n,
\end{equation}
as claimed.  The quotient of $V_n^*\ot V_n^*$ by this space is isomorphic to $\Sym^2(V_n^*)$ via the symmetrization map. 
Since
\[
H^3(V_n, \, k^\times) = H^3(V_n,\, \mathbb{F}_p) / \text{Image}(\delta: H^2(V_n, \, k^\times) \to H^3(V_n,\, \mathbb{F}_p)),
\]
the statement follows from the last claim and \eqref{H3 VnFp}. 
\end{proof}

Given a cohomology  class $\omega \in H^3(V_n ,\, k^\times)$  denote
\begin{equation}
\label{alt+sym}
\omega = \omega_{alt} +\omega_{sym}, \qquad  \omega_{alt}  \in {\bigwedge}^3 (V_n^*) ,\quad  \omega_{sym} \in \Sym^2(V_n^*) 
\end{equation}
the decomposition of $\omega$ from \eqref{H3Ep=odd}.

Let $V$ be a vector space. Consider the interior derivation
\[
\iota: V \ot {\bigwedge}^3 (V^*) \to {\bigwedge}^2 (V^*) : v \ot \phi \mapsto \iota_v(\phi),
\]
given by  
\[
\iota_v(x\wedge y \wedge z) = \langle v,\, z\rangle x\wedge y -   \langle v,\, y\rangle x\wedge z + \langle v,\, x\rangle y\wedge z,
\]
for all $v\in V,\, x,y,z\in V^*$, and extended to ${\bigwedge}^3 (V^*)$ by linearity. 

The {\em radical} of $\phi\in {\bigwedge}^3 (V^*)$ is defined as
\begin{equation}
\label{radical}
\Rad(\phi) := \{ u \in V \mid \iota_u(\phi) =0\}.
\end{equation}
We say that $\phi \in \bigwedge^3 (V^*)$ is {\em non-degenerate} if $\Rad(\phi)=0$.

\begin{proposition}
\label{vinimageFodd}
Let $v\in V_n$ and let $\omega\in H^3(V_n,\, k^\times)$. 
Then $v$, regarded as a simple object in $\C(V_n,\, \omega)$,  is
in the image of  $\Inv(\Z(\C(V_n,\, \omega))) \to  \Inv(\C(V_n,\, \omega))$ if and only if $v \in \Rad(\omega_{alt})$.
\end{proposition}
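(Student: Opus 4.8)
The plan is to use the exact sequence \eqref{ZC inv sequence} of Proposition~\ref{sequence for invertibles} specialized to $G=V_n$. Since $V_n$ is abelian we have $Z(V_n)=V_n$, so the image of the forgetful homomorphism $F:\Inv(\Z(\C(V_n,\,\omega)))\to\Inv(\C(V_n,\,\omega))$ equals $\Ker\bigl(\beta:V_n\to H^2(V_n,\,k^\times)\bigr)$. Thus it suffices to identify this kernel: I claim that for $v\in V_n$, the $2$-cocycle $\beta_v$ defined by \eqref{betaa} is a coboundary if and only if $v\in\Rad(\omega_{alt})$. This reduces the statement to a purely cohomological computation relating the map $\beta$ to the interior derivation $\iota$ appearing in the definition \eqref{radical} of the radical.

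The first step is to observe that because $V_n$ is abelian, the symmetric part $\omega_{sym}\in\Sym^2(V_n^*)$ contributes nothing to $\beta_v$. Indeed, under the identification \eqref{H3 VnFp}, the symmetric classes are spanned by the $x_i\cup y_i$ (more precisely by $x_k\cup y_l+x_l\cup y_k$ modulo the antisymmetric image of $\delta$), and one checks directly from formula \eqref{betaa} that for a cocycle of this ``diagonal'' type the alternating combination $\omega(a,x,y)\omega(x,y,a)/\omega(x,a,y)$ is already a coboundary (alternatively: the homomorphism $\beta$ in \eqref{beta map} is $GL_n(\mathbb{F}_p)$-equivariant, so its restriction to the $GL_n$-submodule generated by diagonal classes is determined by an equivariant map, and a small computation pins it down as trivial on $\Sym^2(V_n^*)$). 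Hence $\beta_v$ depends only on $\omega_{alt}$.

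The second, and main, step is to compute $\beta_v$ for $\omega=\omega_{alt}\in\bigwedge^3(V_n^*)$. Writing $\omega_{alt}$ as a linear combination of cup products $x_i\cup x_j\cup x_k$ and using the explicit cocycle representatives from \eqref{x i} together with the standard formula for the cup product of linear characters, one evaluates \eqref{betaa} and finds that $\beta_v$ is cohomologous to a $2$-cocycle whose cohomology class in $H^2(V_n,\,k^\times)=\bigwedge^2(V_n^*)$ is precisely $\iota_v(\omega_{alt})$ (up to a nonzero scalar). Granting this identification, $\beta_v$ is a coboundary exactly when $\iota_v(\omega_{alt})=0$ in $\bigwedge^2(V_n^*)$, i.e.\ exactly when $v\in\Rad(\omega_{alt})$, which is what we want.

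The hard part is the second step: carrying out the cocycle bookkeeping in \eqref{betaa} cleanly enough to recognize the interior product $\iota_v$ on the nose, including getting the antisymmetrization and the scalar right. A convenient way to organize this is to work first with a single basis cocycle $\omega=x_i\cup x_j\cup x_k$ for distinct $i,j,k$: then $\beta_v(x,y)$ is supported on the coordinates $i,j,k$ of $v$, and a direct expansion of the alternating ratio in \eqref{betaa} — using that $x_\ell(u)=u_\ell\in\{0,\dots,p-1\}$ and that products like $x_i\cup x_j\cup x_k$ evaluated on $(a,x,y)$ are monomials in the coordinates — yields, after discarding coboundary terms, the $2$-cochain $(x,y)\mapsto$ a combination of $v_i(x_jy_k-x_ky_j)$, $v_j(\,\cdots)$, $v_k(\,\cdots)$, which is exactly the cocycle representing $\iota_v(x_i\wedge x_j\wedge x_k)$. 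Extending by linearity and invoking that $\beta$ is a homomorphism (hence additive in $\omega$) then finishes the proof. Once this identification is in hand, everything else is formal and follows from Proposition~\ref{sequence for invertibles}.
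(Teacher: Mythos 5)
Your proposal is correct and follows essentially the same route as the paper: reduce via the exact sequence of Proposition~\ref{sequence for invertibles} to identifying $\Ker(\beta)$, show $\beta$ vanishes on the symmetric part of $\omega$ (using the symmetry of the $y_j$ and the fact that such classes die in $H^2(V_n,k^\times)$), and compute $\beta(v)=\iota_v(\omega_{alt})$ on basis elements $x_i\cup x_j\cup x_k$ by direct expansion of \eqref{betaa}. The paper carries out exactly this computation, obtaining $\beta(v)=v_i x_j\wedge x_k - v_j x_i\wedge x_k + v_k x_i\wedge x_j$ for $\omega=\omega_{ijk}$, so your outline matches the intended argument.
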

\begin{proof}
We claim that in this case the homomorphism 
\[
\beta: V_n \to H^2(V_n,\,k^\times) ={\bigwedge}^2(V_n^*)
\] 
defined by \eqref{betaa} and \eqref{beta map} is given by
\begin{equation}
\label{betavgeneral}
\beta(v)= \iota_v(\omega_{alt}),\quad v\in V_n.
\end{equation}
Let us first prove this claim when $\omega = \omega_{alt} \in {\bigwedge}^3(V_n^*)$ (i.e., when the symmetric part of $\omega$
in \eqref{alt+sym} is trivial).  Such an $\omega$ is a linear combination of $3$-cocycles $\omega_{ijk}$ ($i,j,k$ are distinct
elements of $\{1,\dots,n\}$), given by $\omega(u,\,v,\,w) =\xi^{u_iv_jw_k}$, where $\xi$ is a fixed primitive $p$th root of $1$ in $k$. 
So we may assume that $\omega=\omega_{ijk}$. We have
\[
\beta(v) = v_i x_j \wedge x_k - v_j x_i \wedge x_k + v_k x_i\wedge x_j, \qquad v =(v_1,\dots, v_n)\in V_n,
\]
where $x_i$ are defined by \eqref{x i}. Thus, $\beta(v) = \iota_v(\omega_{ijk})$ and \eqref{betavgeneral}
is true in this case.

Next, let us prove the claim when $\omega = \omega_{sym}\in \Sym^2(V_n^*)$. We need to check that
$\beta=0\in H^2(V_n,\, k^\times)$ in this case.  We may assume that $\omega$ is the image of
$x_i\cup y_j \in H^3(V_n,\mathbb{F}_p)$ under \eqref{seqH2H3H3} i.e.,
\[
\omega(u,\,v,\,w)= \xi^{x_i(u) y_j(v,w)},\qquad u,v,w\in V_n.
\]
Since $y_j$ is symmetric (see \eqref{y i}) we conclude that 
 $\beta(v)$ is the image of $v_iy_j$ under the homomorphism $H^2(V_n,\, \mathbb{F}_p) \to  H^2(V_n,\, k^\times)$
and, hence, is trivial. 

So, \eqref{betavgeneral} is true for all $\omega\in H^3(V_n,\, k^\times)$ and the result follows from Proposition~\ref{sequence for invertibles}.
\end{proof}

\begin{corollary}
\label{ptd iff}
The category $\Z(\C(V_n,\, \omega))$ is pointed if and only if $\omega_{alt} =0$. 
\end{corollary}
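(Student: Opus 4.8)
The plan is to reduce the statement to an application of Corollary~\ref{ZC pointed} together with the explicit description of the homomorphism $\beta$ obtained in Proposition~\ref{vinimageFodd}. By Corollary~\ref{ZC pointed}, the category $\Z(\C(V_n,\,\omega))$ is pointed if and only if $V_n$ is abelian (which it always is) and the homomorphism $\beta: V_n \to H^2(V_n,\,k^\times)$ of \eqref{beta map} is zero. So the whole content of the corollary is the equivalence
\[
\beta = 0 \quad\Longleftrightarrow\quad \omega_{alt} = 0.
\]

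First I would invoke formula \eqref{betavgeneral} from the proof of Proposition~\ref{vinimageFodd}, which identifies $\beta$ with the interior derivation map $v \mapsto \iota_v(\omega_{alt})$ valued in $\bigwedge^2(V_n^*) = H^2(V_n,\,k^\times)$. Under this identification, $\beta = 0$ precisely when $\iota_v(\omega_{alt}) = 0$ for all $v \in V_n$, i.e., when $\Rad(\omega_{alt}) = V_n$, i.e., when $\omega_{alt}$ is the zero element of $\bigwedge^3(V_n^*)$. The last step uses that a nonzero alternating $3$-form on a finite-dimensional space cannot be annihilated by interior multiplication with every vector — indeed if $\omega_{alt}\neq 0$, pick a term $x_i\wedge x_j\wedge x_k$ appearing with nonzero coefficient, and then $\iota_{e_k}(\omega_{alt})$ contains the nonzero term $x_i\wedge x_j$ (this is the standard fact that $\bigwedge^3$ injects into $V\otimes\bigwedge^2$ via the comultiplication/contraction, valid since $p$ is odd so $3! $ is invertible). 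I do not expect any essential obstacle here; the only care needed is to note explicitly that the characteristic-$0$ / odd-$p$ hypothesis guarantees this injectivity, so that $\Rad(\omega_{alt}) = V_n$ forces $\omega_{alt}=0$, and conversely $\omega_{alt}=0$ trivially gives $\beta=0$ by \eqref{betavgeneral}.

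Combining the two equivalences yields: $\Z(\C(V_n,\,\omega))$ is pointed $\iff \beta = 0 \iff \omega_{alt} = 0$, which is exactly the assertion. The main (and only) subtlety is making sure the reader sees that \eqref{betavgeneral} already does all the work, so that the proof of this corollary is genuinely a one-line consequence of the preceding proposition plus Corollary~\ref{ZC pointed}; everything else is the elementary linear-algebra observation that the radical of a nonzero alternating $3$-form is a proper subspace.
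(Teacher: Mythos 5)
Your proof is correct and follows exactly the route the paper intends: the paper gives no separate argument for this corollary, treating it as an immediate consequence of Corollary~\ref{ZC  pointed} together with the identity $\beta(v)=\iota_v(\omega_{alt})$ established in the proof of Proposition~\ref{vinimageFodd}. The only ingredient you add explicitly is the (elementary and in fact characteristic-independent) observation that $\Rad(\phi)=V_n$ forces $\phi=0$ for $\phi\in{\bigwedge}^3(V_n^*)$, which the paper leaves implicit.
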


\begin{corollary}
\label{walt nondeg}
$\Z(\C(V_n,\, \omega))_{pt} =\Rep(V_n)$ if and only if $\omega_{alt}$ is non-degenerate.
In this case $\Z(\C(V_n,\, \omega))_{pt}$ is the trivial component of the universal grading of $\Z(\C(V_n,\, \omega))$
and the universal grading group of $\Z(\C(V_n,\, \omega))$ is $\Hom(V_n,\, k^\times)$, the dual group of $V_n$.
\end{corollary}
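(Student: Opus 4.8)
The plan is to obtain both parts from the exact sequence \eqref{ZC inv sequence}, Proposition~\ref{vinimageFodd}, Corollary~\ref{ZC antipointed}, and the standard description of the adjoint subcategory of a non-degenerate braided fusion category. Throughout write $\C:=\Z(\C(V_n,\,\omega))$; it is non-degenerate and contains the Lagrangian subcategory $\L=\Rep(V_n)$ of central objects supported on $\be$, with $\Inv(\L)=\widehat{V_n}=\Ker F$, where $F:\Inv(\C)\to\Inv(\C(V_n,\,\omega))$ is the forgetful homomorphism appearing in \eqref{ZC inv sequence}.

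For the equivalence: as shown in the proof of Corollary~\ref{ZC antipointed}, $\C_{pt}=\L$ if and only if $F$ is trivial, equivalently (by exactness of \eqref{ZC inv sequence}) if and only if $\beta$ is injective. Proposition~\ref{vinimageFodd} identifies the image of $F$ with $\Rad(\omega_{alt})\subseteq V_n$, so $\beta$ is injective exactly when $\Rad(\omega_{alt})=0$, that is, when $\omega_{alt}$ is non-degenerate. This proves the ``if and only if'', and shows that in the non-degenerate case $\C_{pt}=\L$ is Lagrangian, hence self-centralizing: $(\C_{pt})'=\L'=\L=\C_{pt}$.

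Assume now that $\omega_{alt}$ is non-degenerate. Recall that the trivial component of the universal grading of a fusion category is its adjoint subcategory $\C_{ad}$, and that for a non-degenerate braided fusion category $\C_{ad}=(\C_{pt})'$ (see \cite{DGNO2}). Combined with the previous paragraph this yields $\C_{ad}=\C_{pt}=\Rep(V_n)$, the second assertion. To identify the universal grading group, note first that $\FPdim(\C)=\FPdim(\C(V_n,\,\omega))^2=p^{2n}$ and $\FPdim(\C_{ad})=\FPdim(\Rep(V_n))=p^n$, so $|U(\C)|=p^n$. Moreover the forgetful functor $\C\to\C(V_n,\,\omega)=\Vec_{V_n}^\omega$ sends every simple object to a homogeneous one (its underlying $V_n$-graded space is supported in a single degree, since for abelian $V_n$ conjugacy classes are singletons), so it induces a faithful $V_n$-grading of $\C$ whose trivial component is exactly $\L=\C_{ad}$. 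A faithful grading with trivial component $\C_{ad}$ is the universal grading, hence $U(\C)\cong V_n\cong\Hom(V_n,\,k^\times)$, the last isomorphism because $V_n$ is elementary abelian (equivalently, the monodromy pairing identifies $\widehat{U(\C)}$ with $\Inv(\C)=\widehat{V_n}$).

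I expect the only substantive step to be the equality $\C_{ad}=\C_{pt}$. In a general braided fusion category the adjoint subcategory need not lie inside the pointed part (for instance $\Rep(G)$ with $G$ perfect), so this step genuinely uses both the identity $\C_{ad}=(\C_{pt})'$ and the fact that the non-degeneracy hypothesis on $\omega_{alt}$ has forced $\C_{pt}$ to be Lagrangian. Everything else is bookkeeping with the exact sequences already at hand and with Frobenius--Perron dimensions.
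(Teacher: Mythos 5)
Your proof is correct and follows the same route as the paper: the first assertion is exactly the paper's argument via Corollary~\ref{ZC antipointed} combined with the identification $\ker\beta=\Rad(\omega_{alt})$ from Proposition~\ref{vinimageFodd}, and your treatment of the universal grading (using $\C_{ad}=(\C_{pt})'$ for non-degenerate $\C$, the Lagrangian property of $\C_{pt}$, and the faithful $V_n$-grading induced by the forgetful functor) simply spells out the standard facts the paper leaves implicit in its one-line proof. No gaps.
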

\begin{proof}
This follows from Corollary~\ref{ZC  antipointed}.
\end{proof}

\begin{theorem}
\label{exact seq for BrPic in nondeg case}
Let  $\omega\in H^3(V_n,\, k^\times)$ be such that $\omega_{alt}$ is non-degenerate. There is
an exact sequence of groups:
\begin{equation}
\label{exact sequence for Vn}
0\to V_n \xrightarrow{\iota(\omega_{alt})} {\bigwedge}^2(V_n^*) \to \BrPic(\C(V_n,\, \omega)) \to \Stab_{V_n}(\omega) \to 0.
\end{equation}
\end{theorem}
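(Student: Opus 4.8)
The plan is to derive the sequence \eqref{exact sequence for Vn} by specializing Corollary~\ref{combining2seq} to $G=V_n$ and checking that, under the non-degeneracy hypothesis, the two terms on the far left collapse while the induction homomorphism on the right is surjective.

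First I would establish surjectivity of induction $\Aut(\C(V_n,\,\omega))\to \Aut^{br}(\Z(\C(V_n,\,\omega)))=\BrPic(\C(V_n,\,\omega))$. By Corollary~\ref{walt nondeg}, non-degeneracy of $\omega_{alt}$ forces $\L:=\Z(\C(V_n,\,\omega))_{pt}=\Rep(V_n)$ and, moreover, identifies $\L$ with the trivial component of the universal grading of $\Z(\C(V_n,\,\omega))$. Since every braided tensor autoequivalence of a fusion category preserves its universal grading and therefore fixes the equivalence class of the trivial graded component, we get $\alpha(\L)\cong\L$ for all $\alpha\in\Aut^{br}(\Z(\C(V_n,\,\omega)))$. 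Corollary~\ref{Indsurj} then applies and gives $I(V_n,\,\omega)=\BrPic(\C(V_n,\,\omega))$.

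Next I would feed this into Corollary~\ref{combining2seq} with $G=V_n$. As $V_n$ is abelian, $Z(V_n)=V_n$ and $\Inn(V_n)=1$, so $\OutStab(\omega)=\Stab(\omega)=\Stab_{V_n}(\omega)$; also $H^2(V_n,\,k^\times)={\bigwedge}^2(V_n^*)$ as in \eqref{H2toH3odd}. Combined with the surjectivity just obtained, Corollary~\ref{combining2seq} yields an exact sequence
\begin{equation}
0 \to \widehat{V_n} \to \Inv(\Z(\C(V_n,\,\omega))) \xrightarrow{F} V_n \xrightarrow{\beta} {\bigwedge}^2(V_n^*) \xrightarrow{\ind} \BrPic(\C(V_n,\,\omega)) \to \Stab_{V_n}(\omega) \to 0.
\end{equation}
By Proposition~\ref{vinimageFodd}, formula \eqref{betavgeneral}, the map $\beta$ is exactly the contraction $\iota(\omega_{alt})$, and $\Rad(\omega_{alt})=0$ means $\Ker(\beta)=0$; hence $\beta$ is injective. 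Exactness at $V_n$ then forces $F=0$, so $\widehat{V_n}\xrightarrow{\sim}\Inv(\Z(\C(V_n,\,\omega)))$ and the first two arrows may be discarded, leaving precisely \eqref{exact sequence for Vn}.

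The only step that requires genuine care is the first one: one must be sure that the universal-grading description of $\Rep(V_n)$ in Corollary~\ref{walt nondeg} is exactly the hypothesis needed to invoke Corollary~\ref{Indsurj}; everything afterwards is bookkeeping with the exact sequences already established in Section~\ref{section 4}.
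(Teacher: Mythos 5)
Your proposal is correct and follows the same route as the paper, whose proof is simply the citation of Proposition~\ref{ptd image of induction} and Corollary~\ref{Indsurj} applied to $G=V_n$; you have merely made explicit the details the paper leaves to the reader (verifying the hypothesis of Corollary~\ref{Indsurj} via Corollary~\ref{walt nondeg}, and identifying $\beta$ with the injective map $\iota(\omega_{alt})$ via Proposition~\ref{vinimageFodd}).
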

\begin{proof}
This follows from Proposition~\ref{ptd image of induction} and Corollary~\ref{Indsurj} applied to $G=V_n$.
\end{proof}

Theorem~\ref{exact seq for BrPic in nondeg case} implies that the Brauer-Picard group of $\C(V_n,\, \omega)$ is an extension
 of $\Stab_{V_n}(\omega) = \Stab(\omega_{alt}) \cap \Stab(\omega_{sym})$ by an elementary abelian $p$-group. 

\subsection{The Brauer-Picard group of $\C(V_n,\, \omega)$ when $p=2$}

It is known (see, e.g., \cite{A}) that  
\[
H^\bullet(V_n,\, \mathbb{F}_2) =  \mathbb{F}_2[x_1,\dots, x_n],
\]
where $x_i,\, i=1,\dots,n$ are one-dimensional generators represented by $1$-cocycles 
\begin{equation}
\label{x i when p=2}
x_i(v) = v_i, \qquad \text{where } v =(v_1,\dots v_n)\in V_n.
\end{equation}

\begin{proposition}
\label{H3 when p=2}
There is an isomorphism of $GL_n(\mathbb{F}_2)$-modules:
\begin{equation}
\label{H3Ep=2}
H^3(V_n ,\, k^\times) \cong \Sym^3 (x_1,\dots, x_n) / \mathbb{F}_2 \langle x_i^2x_j+x_ix_j^2 \mid \, i,j=1,\dots,n\rangle.
\end{equation}
\end{proposition}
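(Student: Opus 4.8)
The plan is to mimic the proof of Proposition~\ref{H3 when p odd}: realize $H^3(V_n,\,k^\times)$ as a quotient of $H^3(V_n,\,\mathbb{F}_2)$ by the image of a Bockstein-type connecting homomorphism, and then identify that image as a $GL_n(\mathbb{F}_2)$-submodule.

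First I would run the coefficient-sequence argument of the odd case in the same way. The sequence $0\to\mathbb{F}_2\to k^\times\xrightarrow{\xi\mapsto\xi^2}k^\times\to 0$, with $\mathbb{F}_2=\{\pm1\}$ the group of square roots of $1$ in $k$, gives a long exact sequence of $GL_n(\mathbb{F}_2)$-modules; the squaring maps $H^m(V_n,\,k^\times)\to H^m(V_n,\,k^\times)$ all vanish because $H^m(V_n,\,k^\times)\cong H^{m+1}(V_n,\,\mathbb{Z})$ has exponent $2$ (K\"unneth formula). Hence one obtains the short exact sequence
\[
0\to H^{2}(V_n,\,k^\times)\xrightarrow{\ \delta\ }H^{3}(V_n,\,\mathbb{F}_2)\to H^{3}(V_n,\,k^\times)\to 0,
\]
and, one degree down in the same long exact sequence, the surjectivity of the map $\iota_*\colon H^{2}(V_n,\,\mathbb{F}_2)\twoheadrightarrow H^{2}(V_n,\,k^\times)$ induced by the inclusion $\mathbb{F}_2\hookrightarrow k^\times$. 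Since $H^3(V_n,\,\mathbb{F}_2)$ is the degree-$3$ part $\Sym^3(x_1,\dots,x_n)$ of $\mathbb{F}_2[x_1,\dots,x_n]$, proving \eqref{H3Ep=2} reduces to computing $\Image(\delta)$.

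The key step is the identification of $\delta\circ\iota_*\colon H^2(V_n,\,\mathbb{F}_2)\to H^3(V_n,\,\mathbb{F}_2)$ with the Bockstein operation $\mathrm{Sq}^1$ coming from $0\to\mathbb{Z}/2\to\mathbb{Z}/4\to\mathbb{Z}/2\to 0$; this follows from naturality of connecting homomorphisms applied to the morphism of coefficient sequences $(0\to\mathbb{Z}/2\to\mathbb{Z}/4\to\mathbb{Z}/2\to0)\to(0\to\mathbb{F}_2\to k^\times\to k^\times\to0)$ that sends a generator of $\mathbb{Z}/4$ to a primitive $4$th root of unity. On $H^\bullet(V_n,\,\mathbb{F}_2)=\mathbb{F}_2[x_1,\dots,x_n]$ the operation $\mathrm{Sq}^1$ is the derivation with $\mathrm{Sq}^1(x_i)=x_i^2$, so $\mathrm{Sq}^1(x_ix_j)=x_i^2x_j+x_ix_j^2$ while $\mathrm{Sq}^1(x_i^2)=0$. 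Since the monomials $x_ix_j$ span $H^2(V_n,\,\mathbb{F}_2)$ and $\iota_*$ is onto, $\Image(\delta)=\Image(\delta\circ\iota_*)=\mathbb{F}_2\langle x_i^2x_j+x_ix_j^2\mid i,j=1,\dots,n\rangle$, and every map involved is $GL_n(\mathbb{F}_2)$-equivariant by functoriality, giving \eqref{H3Ep=2} as an isomorphism of $GL_n(\mathbb{F}_2)$-modules.

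Alternatively, to stay at the cochain level exactly as in Proposition~\ref{H3 when p odd}, I would represent a class in $H^2(V_n,\,k^\times)$ by $\mu(u,v)=(-1)^{(Au,v)}$ for an $n\times n$ matrix $A$ over $\mathbb{F}_2$, lift it by $\nu(u,v)=(\sqrt{-1})^{(Au,v)}$, and compute $d\nu$ using that the carry $\lfloor(a+b)/2\rfloor$ equals $ab$ for $a,b\in\{0,1\}$; this yields $\delta(\mu)=\sum_{k,l}a_{kl}\,(x_k\cup x_l\cup x_l+x_k\cup x_k\cup x_l)$, i.e.\ the same span. The main obstacle in either approach is bookkeeping rather than anything conceptual: making sure the connecting map is identified with exactly $\mathrm{Sq}^1$ (equivalently, that no stray terms survive in $d\nu$), and confirming that $\dim_{\mathbb{F}_2}\mathbb{F}_2\langle x_i^2x_j+x_ix_j^2\rangle=\binom{n}{2}=\dim_{\mathbb{F}_2}H^2(V_n,\,k^\times)$, so that $\Image(\delta)$ is accounted for exactly.
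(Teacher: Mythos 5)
Your proposal is correct, and its skeleton (coefficient sequence $0\to\mathbb{F}_2\to k^\times\to k^\times\to 0$, vanishing of the squaring maps by the exponent argument, reduction to computing $\operatorname{Im}(\delta)$ inside $H^3(V_n,\mathbb{F}_2)=\Sym^3(x_1,\dots,x_n)$) matches the paper exactly. Where you genuinely diverge is in the identification of $\operatorname{Im}(\delta)$: the paper simply reruns the explicit cochain computation from Proposition~\ref{H3 when p odd} (lifting $\mu(u,v)=(-1)^{(Au,v)}$ and computing $d\nu$), concluding that the image is spanned by $x_k\cup y_l+x_l\cup y_k$ with $y_k=x_k^2$; your primary route instead maps the sequence $0\to\mathbb{Z}/2\to\mathbb{Z}/4\to\mathbb{Z}/2\to 0$ into the coefficient sequence, uses surjectivity of $\iota_*\colon H^2(V_n,\mathbb{F}_2)\to H^2(V_n,k^\times)$ (read off one degree lower in the same long exact sequence) to replace $\delta$ by $\delta\circ\iota_*=\mathrm{Sq}^1$, and then invokes the derivation property $\mathrm{Sq}^1(x_ix_j)=x_i^2x_j+x_ix_j^2$, $\mathrm{Sq}^1(x_i^2)=0$. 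All the steps check out: the square of coefficient sequences commutes with the stated choices, naturality of connecting maps gives $\delta\circ\iota_*=\mathrm{Sq}^1$, and $GL_n(\mathbb{F}_2)$-equivariance is automatic. Your route buys a computation-free identification of the image via a standard cohomology operation (and the dimension count $\binom{n}{2}$ is a clean consistency check, though not needed once $\iota_*$ is known to be onto); the paper's route has the virtue of being self-contained at the cochain level and of recycling a calculation it already carried out for odd $p$ --- which is essentially your stated alternative.
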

\begin{proof}
The argument is similar to that of Proposition~\ref{H3 when p odd}.
There is a short exact sequence of $GL_n(\mathbb{F}_2)$-modules
\[
0\to H^{2}(V_n,\,k^\times) \to H^{3}(V_n,\,\mathbb{F}_2) \to H^{3}(V_n,\,k^\times)\to 0.
\]
The same computation as in the proof of Proposition~\ref{H3 when p odd} shows
image of inclusion of $H^{2}(V_n,\,k^\times) = \bigwedge^2(V_n^*)$ into 
$H^{3}(V_n,\,\mathbb{F}_p)  =  \Sym^3(x_1,\dots, x_n)$ is spanned by elements 
$x_k \cup y_l + x_l \cup y_k$ with $k,l=1,\dots,n$ (see \eqref{span of image odd}). Since  for $p=2$ we have $y_k=x_k^2$,    
the result follows. 
\end{proof}


\begin{proposition}
\label{H3 when p even}
There is a short exact sequence of $GL_n(\mathbb{F}_2)$-modules:
\begin{equation}
\label{H3Ep=even}
0\to \Sym^2(V_n^*) \to H^3(V_n ,\, k^\times)  \xrightarrow{\pi} {\bigwedge}^3 (V_n^*) \to 0.
\end{equation}
\end{proposition}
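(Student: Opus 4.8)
The plan is to realize the desired sequence as the $p=2$ analogue of the exact sequence \eqref{seqH2H3H3} in the proof of Proposition~\ref{H3 when p odd}, but now identifying the quotient module. By Proposition~\ref{H3 when p=2} we already have $H^3(V_n,\,k^\times) \cong \Sym^3(x_1,\dots,x_n)/\mathbb{F}_2\langle x_i^2x_j + x_ix_j^2\rangle$ as $GL_n(\mathbb{F}_2)$-modules, where $\Sym^3$ denotes the degree-$3$ part of the polynomial ring. So the content of the present proposition is to build a $GL_n(\mathbb{F}_2)$-equivariant surjection $\pi$ from this module onto $\bigwedge^3(V_n^*)$ whose kernel is isomorphic to $\Sym^2(V_n^*)$ (note that, unlike the odd case, the symmetric and exterior pieces no longer split as a direct sum over $\mathbb{F}_2$).

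First I would describe the surjection $\pi$ explicitly on the polynomial model. A cubic monomial $x_ix_jx_k$ with $i,j,k$ pairwise distinct maps to $x_i\wedge x_j\wedge x_k$, while the monomials $x_i^2x_j$, $x_ix_j^2$, and $x_i^3$ — those not supported on three distinct indices — map to $0$. One checks this is well-defined on the quotient by $\mathbb{F}_2\langle x_i^2x_j+x_ix_j^2\rangle$ (both summands already go to $0$) and is manifestly $GL_n(\mathbb{F}_2)$-equivariant, since the symmetrization-to-antisymmetrization map $\Sym^3 \to \bigwedge^3$ is natural. It is clearly surjective because every wedge $x_i\wedge x_j\wedge x_k$ is hit. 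Next I would identify the kernel: inside $\Sym^3(x_1,\dots,x_n)$, the span of monomials \emph{not} supported on three distinct indices is spanned by the $x_i^3$ and the $x_i^2x_j$ ($i\ne j$); after quotienting by the relations $x_i^2x_j + x_ix_j^2$ we are left, for each unordered pair $\{i,j\}$, with a single class $[x_i^2x_j] = [x_ix_j^2]$, together with the classes $[x_i^3]$. Sending $[x_i^2x_j]\mapsto x_ix_j$ (for $i\ne j$) and $[x_i^3]\mapsto x_i^2$ gives a bijection of this kernel with the monomial basis of $\Sym^2(V_n^*)$; the point is that this assignment is $GL_n(\mathbb{F}_2)$-equivariant, which is most cleanly seen by recognizing the kernel as the image of the natural Frobenius-twisted multiplication map $\Sym^2(V_n^*) \to \Sym^3(V_n^*)$, $f \mapsto$ (suitable symmetrization involving squares), composed with the quotient — over $\mathbb{F}_2$ the relevant map is $x_ix_j \mapsto x_i^2x_j + x_ix_j^2 + \text{(diagonal correction)}$ on the nose, but it is cleaner to note that $\ker\pi$ is spanned by products $x_i \cdot (x_j^2)$, and $x_j \mapsto x_j^2$ is the (semilinear-free, since we are over $\mathbb{F}_2$ and degree-$2$) Frobenius, giving a genuine $\mathbb{F}_2[GL_n]$-module map $\Sym^2 \to \Sym^3$ with image exactly $\ker\pi$.

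The main obstacle I anticipate is the $GL_n(\mathbb{F}_2)$-equivariance of the identification $\ker\pi \cong \Sym^2(V_n^*)$, not its existence as an abstract $\mathbb{F}_2$-vector space (dimension count is immediate: $\dim\Sym^3 = \binom{n+2}{3}$, $\dim\bigwedge^3 = \binom{n}{3}$, $\dim$ of the relations space is $\binom{n}{2}$, and $\binom{n+2}{3} - \binom{n}{2} - \binom{n}{3} = \binom{n+1}{2} = \dim\Sym^2$). The clean way to handle equivariance is: the exact sequence \eqref{seqH2H3H3}, valid for every prime including $p=2$, reads $0 \to H^2(V_n,k^\times) \xrightarrow{\delta} H^3(V_n,\mathbb{F}_2) \to H^3(V_n,k^\times) \to 0$; composing with the surjection $H^3(V_n,\mathbb{F}_2) = \Sym^3 \xrightarrow{\pi} \bigwedge^3$ and checking that $\pi\circ\delta = 0$ — which holds because the proof of Proposition~\ref{H3 when p=2} shows $\Image(\delta)$ is spanned by the $x_k\cup y_l + x_l\cup y_k = x_kx_l^2 + x_lx_k^2$, all of which lie in $\ker\pi$ — yields a surjection $H^3(V_n,k^\times) \twoheadrightarrow \bigwedge^3(V_n^*)$, and its kernel is $\Image$ in $H^3(V_n,k^\times)$ of $\ker\pi \subset H^3(V_n,\mathbb{F}_2)$. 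It then remains to observe that this kernel, namely the span of $x_i^3$ and $x_i^2x_j$ modulo the relations, is $GL_n(\mathbb{F}_2)$-isomorphic to $\Sym^2(V_n^*)$ via $x_ix_j \mapsto$ (class of $x_i^2x_j + x_ix_j^2$) for $i\ne j$ and $x_i^2 \mapsto$ (class of $x_i^3$); equivariance of this second map is the identity $\phi(x_ix_j) = $ (degree-$3$ squaring-type expression) being functorial in $\phi \in GL_n$, which one verifies on the two generating types of monomials in $\Sym^2$. I would present this verification compactly, as it is the one genuinely calculational point.
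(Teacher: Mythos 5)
Your argument is correct and follows essentially the same route as the paper: using Proposition~\ref{H3 when p=2}, identify the span of the classes $[x_i^2x_j]$ (including $[x_i^3]$) as a copy of $\Sym^2(V_n^*)$ and the quotient, with cosets represented by the squarefree monomials $x_ix_jx_k$, as ${\bigwedge}^3(V_n^*)$. The only difference is one of detail: the paper dismisses the identification of the submodule with $\Sym^2(V_n^*)$ as clear, whereas you justify its $GL_n(\mathbb{F}_2)$-equivariance via the Frobenius-squaring map $f\cdot g\mapsto [f^2g]$, which is a worthwhile elaboration but not a different proof.
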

\begin{proof}
Using Proposition~\ref{H3 when p=2} we see that $H^3(V_n ,\, k^\times)$ contains
a $GL_n(\mathbb{F}_2)$-submodule  spanned by $x_i^2x_j,\, i,j=1,\dots,n$ (modulo $\mathbb{F}_2 \langle x_i^2x_j+x_ix_j^2 \mid 
 i,j=1,\dots,n\rangle$). Clearly, this submodule is isomorphic to  $\Sym^2(V_n^*)$ and the corresponding quotient
is isomorphic to  ${\bigwedge}^3 (V_n^*)$  (the cosets are represented by polynomials $x_ix_jx_k$, where $i,j,k=1,\dots,n$ are distinct).
\end{proof}

For $\omega\in H^3(V_n ,\, k^\times)$ let 
\begin{equation}
\label{omega alt 2}
\omega_{alt}=\pi(\omega)\in {\bigwedge}^3 (V_n^*).
\end{equation}

\begin{proposition}
\label{vinimageFeven}
Let $v\in V_n$ and let $\omega\in H^3(V_n,\, k^\times)$. 
Then $v$, regarded as a simple object in $\C(V_n,\, \omega)$,  is
in the image of  $\Inv(\Z(\C(V_n,\, \omega))) \to  \Inv(\C(V_n,\, \omega))$ if and only if $v\in \Rad(\omega_{alt})$.
\end{proposition}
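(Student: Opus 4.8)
The plan is to reduce Proposition~\ref{vinimageFeven} to the computation of the homomorphism $\beta: V_n \to H^2(V_n,\, k^\times)$ of \eqref{betaa}--\eqref{beta map}, exactly as in the proof of Proposition~\ref{vinimageFodd}, and then to show that $\beta$ factors through $\omega_{alt}=\pi(\omega)$. By Proposition~\ref{sequence for invertibles}, $v \in V_n$ lies in the image of $\Inv(\Z(\C(V_n,\,\omega))) \to \Inv(\C(V_n,\,\omega))$ if and only if $\beta(v)=0$ in $H^2(V_n,\, k^\times)$; so it suffices to identify $\Ker\beta$ with $\Rad(\omega_{alt})$. The key claim, as before, is that
\begin{equation}
\label{betavgenerale}
\beta(v) = \iota_v(\omega_{alt}), \qquad v \in V_n,
\end{equation}
where $\iota$ is the interior derivation $V_n \ot \bigwedge^3(V_n^*) \to \bigwedge^2(V_n^*)$. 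Once \eqref{betavgenerale} is established, the statement is immediate: $\beta(v)=0$ iff $\iota_v(\omega_{alt})=0$ iff $v \in \Rad(\omega_{alt})$ by definition \eqref{radical}.

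To prove \eqref{betavgenerale} I would argue by linearity on the two pieces of the short exact sequence \eqref{H3Ep=even}. First, for the $\Sym^2(V_n^*)$ part: by Proposition~\ref{H3 when p=2} and the proof of Proposition~\ref{H3 when p even}, this submodule is spanned (modulo the relations) by the classes of $x_i^2 x_j$, i.e.\ by cocycles of the form $\omega(u,v,w) = (-1)^{x_i(u)\, x_i(v)\, x_j(w)}$ together with those coming from $y_i = x_i^2$; in any case these lie in the image of $\delta: H^2(V_n,\, k^\times) \to H^3(V_n,\, \mathbb{F}_2)$, or rather they are cohomologically captured by symmetric data $x_i \cup y_j$ with $y_j$ symmetric, so the same argument as in Proposition~\ref{vinimageFodd} (second half) shows $\beta(v)$ is the image of an $\mathbb{F}_2$-class under $H^2(V_n,\, \mathbb{F}_2) \to H^2(V_n,\, k^\times)$ and hence vanishes. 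Since $\pi$ kills this submodule, $\iota_v(\omega_{alt})$ also vanishes here, so \eqref{betavgenerale} holds on $\Sym^2(V_n^*)$. Second, for a class mapping to a basis element $x_i x_j x_k \in \bigwedge^3(V_n^*)$ (distinct $i,j,k$), one picks the cocycle representative $\omega(u,v,w) = (-1)^{u_i v_j w_k}$ and computes $\beta(v)$ directly from \eqref{betaa}; the computation is formally identical to the odd case and yields $v_i\, x_j \wedge x_k + v_j\, x_i \wedge x_k + v_k\, x_i \wedge x_j = \iota_v(x_i \wedge x_j \wedge x_k)$ (signs being irrelevant in characteristic~$2$). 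Summing over a basis gives \eqref{betavgenerale} in general.

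The main obstacle I anticipate is purely bookkeeping in characteristic~$2$: unlike the odd case, the splitting $H^3(V_n,\,k^\times) \cong \bigwedge^3(V_n^*) \oplus \Sym^2(V_n^*)$ of Proposition~\ref{H3 when p odd} fails, and we only have the non-split short exact sequence \eqref{H3Ep=even}. So $\omega$ does not decompose canonically as $\omega_{alt} + \omega_{sym}$; only $\omega_{alt}=\pi(\omega)$ is well defined. One must therefore be careful that $\beta(v)$ depends only on $\pi(\omega)$ — equivalently, that $\beta$ annihilates the $\Sym^2(V_n^*)$-submodule — and that the cocycle representatives $x_ix_jx_k$ chosen as coset representatives give the interior product of $\pi(\omega)$ rather than of some non-canonical lift. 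Both points follow from the first-half argument above (the $\Sym^2$ part contributes nothing to $\beta$), but this is the step requiring genuine care. After that, appealing to Proposition~\ref{sequence for invertibles} closes the argument, and one records the statement exactly as in Proposition~\ref{vinimageFodd}.
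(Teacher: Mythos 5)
Your proposal is correct and is essentially the paper's own argument: the paper proves Proposition~\ref{vinimageFeven} by simply noting it is similar to the proof of Proposition~\ref{vinimageFodd}, and you have carried out exactly that reduction, computing $\beta(v)=\iota_v(\omega_{alt})$ termwise via the cocycle representatives $(-1)^{u_iv_jw_k}$ and checking that the symmetric classes $x_i^2x_j$ contribute trivially to $\beta$. Your added care about the non-splitness of \eqref{H3Ep=even} in characteristic $2$ (so that only $\omega_{alt}=\pi(\omega)$ is canonical and one must verify $\beta$ is independent of the choice of lift) is the right point to flag and is handled correctly.
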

\begin{proof}
This is similar to proof of Proposition~\ref{vinimageFodd}.
\end{proof}

\begin{corollary}
\label{ptd iff 2}
The category $\Z(\C(V_n,\, \omega))$ is pointed if and only if $\omega_{alt}=0$.
\end{corollary}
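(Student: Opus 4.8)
The plan is to deduce Corollary~\ref{ptd iff 2} from Corollary~\ref{ZC pointed} together with Proposition~\ref{vinimageFeven}, in exact analogy with the odd case (Corollary~\ref{ptd iff}). Since $V_n$ is abelian, Corollary~\ref{ZC pointed} reduces the statement to the assertion that the homomorphism $\beta\colon V_n \to H^2(V_n,\, k^\times)$ of \eqref{beta map} is trivial. By exactness of the sequence of Proposition~\ref{sequence for invertibles} at $Z(V_n) = V_n$, the image of the forgetful homomorphism $F\colon \Inv(\Z(\C(V_n,\, \omega))) \to V_n$ coincides with $\Ker(\beta)$, so $\beta = 0$ if and only if $F$ is surjective.

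Granting that, the first step is to invoke Proposition~\ref{vinimageFeven}, which identifies $\Image(F)$ with the subspace $\Rad(\omega_{alt}) \subseteq V_n$; hence $\Z(\C(V_n,\, \omega))$ is pointed exactly when $\Rad(\omega_{alt}) = V_n$. The second step is the elementary remark that $\Rad(\phi) = V_n$ forces $\phi = 0$ for every $\phi \in {\bigwedge}^3(V_n^*)$: expanding $\phi$ in the monomial basis $\{x_i \wedge x_j \wedge x_k : i < j < k\}$ and using the explicit formula for $\iota$, the contraction $\iota_{e_l}(\phi)$ against a standard basis vector $e_l$ of $V_n$ collects, with no cancellation between distinct monomials, precisely the terms of $\phi$ divisible by $x_l$; thus $\iota_{e_l}(\phi) = 0$ for all $l$ kills every coefficient of $\phi$. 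Applying this with $\phi = \omega_{alt}$ gives $\Rad(\omega_{alt}) = V_n \iff \omega_{alt} = 0$, and chaining the equivalences proves the corollary.

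The only substantive ingredient is Proposition~\ref{vinimageFeven}, whose proof the paper indicates parallels that of Proposition~\ref{vinimageFodd}. The one point that genuinely differs when $p = 2$ is that $H^3(V_n,\, k^\times)$ is only the (non-canonically split) extension \eqref{H3Ep=even} of ${\bigwedge}^3(V_n^*)$ by $\Sym^2(V_n^*)$, rather than a direct sum as in the odd case, so one must verify that $\beta$ depends on $\omega$ only through $\omega_{alt} = \pi(\omega)$ --- equivalently, that a class pulled back from $\Sym^2(V_n^*)$ contributes trivially to $\beta$. Exactly as in the odd-prime computation, this reduces to the fact that for $p = 2$ the relevant $2$-cocycles are built from $y_j = x_j^2$, and $x_j^2$ maps to $0$ under the surjection $H^2(V_n,\, \mathbb{F}_2) = \Sym^2(V_n^*) \to H^2(V_n,\, k^\times) = {\bigwedge}^2(V_n^*)$; once this is checked, formula \eqref{betavgeneral}, namely $\beta(v) = \iota_v(\omega_{alt})$, holds verbatim and the rest is bookkeeping. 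I expect this characteristic-$2$ verification inside Proposition~\ref{vinimageFeven} to be the main (and essentially only) obstacle; the remainder of the argument is formal.
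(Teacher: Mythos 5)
Your argument is correct and is exactly the (unwritten) proof the paper intends: the corollary is stated without proof because it follows immediately from Corollary~\ref{ZC pointed}, the exact sequence of Proposition~\ref{sequence for invertibles}, and Proposition~\ref{vinimageFeven}, together with the elementary fact that $\Rad(\phi)=V_n$ forces $\phi=0$ in ${\bigwedge}^3(V_n^*)$. Your additional remarks on the characteristic-$2$ verification inside Proposition~\ref{vinimageFeven} correctly identify the only point where the even case differs from the odd one, but they concern an ingredient the paper takes as already established rather than the corollary itself.
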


\begin{corollary}
\label{exact seq for BrPic in nondeg case p=2}
Corollary~\ref{walt nondeg} and Theorem~\ref{exact seq for BrPic in nondeg case}
hold for $p=2$ (but note that the meaning of $\omega_{alt}$ is different in this case, cf.\ \eqref{omega alt 2})
\end{corollary}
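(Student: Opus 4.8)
The plan is to observe that the two cited results were derived, in the odd-$p$ case, from parity-independent categorical facts together with one cohomological computation, and that only the latter has to be redone for $p=2$ --- and it has essentially been done already, as Proposition~\ref{vinimageFeven}. First I would record the dependency structure: Corollary~\ref{walt nondeg} is deduced from Corollary~\ref{ZC antipointed} (valid for every abelian group), and Theorem~\ref{exact seq for BrPic in nondeg case} from Proposition~\ref{ptd image of induction} together with Corollary~\ref{Indsurj}, neither of which mentions the prime. The sole parity-sensitive ingredient is the translation of these statements into the language of $\omega_{alt}$: namely the identification of the image of the forgetful map $\Inv(\Z(\C(V_n,\,\omega)))\to\Inv(\C(V_n,\,\omega))$ --- equivalently, of $\ker\big(\beta\colon V_n\to H^2(V_n,\,k^\times)={\bigwedge}^2(V_n^*)\big)$ --- with $\Rad(\omega_{alt})$. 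For $p=2$ this is precisely Proposition~\ref{vinimageFeven}, with $\omega_{alt}=\pi(\omega)\in{\bigwedge}^3(V_n^*)$ now understood in the sense of \eqref{omega alt 2}.

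With this in hand the arguments are copied. For the analogue of Corollary~\ref{walt nondeg}: Corollary~\ref{ZC antipointed} gives $\Z(\C(V_n,\,\omega))_{pt}=\Rep(V_n)$ iff $\beta$ is injective, and Proposition~\ref{vinimageFeven} identifies $\ker\beta$ with $\Rad(\omega_{alt})$, so this holds iff $\omega_{alt}$ is non-degenerate; in that case $\Rep(V_n)$ is the maximal pointed subcategory of the center, and the remaining (universal grading) assertions follow by the same reasoning as in the proof of Corollary~\ref{walt nondeg}. For the analogue of Theorem~\ref{exact seq for BrPic in nondeg case}: if $\omega_{alt}$ is non-degenerate then $\Rep(V_n)=\Z(\C(V_n,\,\omega))_{pt}$ is stable under every braided autoequivalence, so Corollary~\ref{Indsurj} applies and $\BrPic(\C(V_n,\,\omega))=I(V_n,\,\omega)$; Proposition~\ref{ptd image of induction} with $G=V_n$ then reads
\[
V_n \xrightarrow{\beta} {\bigwedge}^2(V_n^*) \xrightarrow{\ind} \BrPic(\C(V_n,\,\omega)) \to \OutStab(\omega) \to 0,
\]
and since $\Inn(V_n)=1$ we have $\OutStab(\omega)=\Stab_{V_n}(\omega)$, while $\beta=\iota(\omega_{alt})$ is injective exactly because $\omega_{alt}$ is non-degenerate; prepending this injectivity yields the exact sequence \eqref{exact sequence for Vn}.

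The one step that needs genuine work --- and the main obstacle --- is Proposition~\ref{vinimageFeven} itself, i.e.\ the assertion that for $p=2$ the homomorphism $\beta$ still factors through $\pi$ and equals $v\mapsto\iota_v(\pi(\omega))$. The complication compared with odd $p$ is that the extension \eqref{H3Ep=even} need not split as a $GL_n(\mathbb{F}_2)$-module, so one cannot simply argue componentwise on a direct sum as in Proposition~\ref{vinimageFodd}; instead one works with the presentation $H^3(V_n,\,\mathbb{F}_2)=\Sym^3(x_1,\dots,x_n)/\mathbb{F}_2\langle x_i^2x_j+x_ix_j^2\rangle$ from Proposition~\ref{H3 when p=2}. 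Taking a representative of $\omega$ as a sum of squarefree monomials $x_ix_jx_k$ and monomials $x_ix_j^2=x_i\cup y_j$ (recall $y_j=x_j^2$), one computes $\beta(v)$ directly from \eqref{betaa}: the squarefree part contributes $\iota_v$ of the corresponding element of ${\bigwedge}^3(V_n^*)$ exactly as in the proof of Proposition~\ref{vinimageFodd}, while each $x_i\cup y_j$, with $y_j$ a symmetric $2$-cocycle, contributes a $2$-cochain that is a coboundary over $k^\times$, just as the symmetric part did for odd $p$. The carrying-term bookkeeping (the integers $\lbrace u_i,v_i\rbrace$) is identical to the odd-prime calculation, so no genuinely new difficulty arises beyond phrasing the argument through the quotient presentation rather than a splitting.
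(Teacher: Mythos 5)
Your proposal is correct and follows exactly the route the paper intends: the paper gives no explicit proof of this corollary, relying on the fact that the categorical inputs (Corollary~\ref{ZC  antipointed}, Proposition~\ref{ptd image of induction}, Corollary~\ref{Indsurj}) are prime-independent and that the only parity-sensitive step is Proposition~\ref{vinimageFeven}, itself proved ``similarly'' to the odd case. Your write-up merely makes explicit the details the paper leaves implicit, including the correct observation that for $p=2$ one computes $\beta$ on a representative in the quotient presentation of $H^3(V_n,\,k^\times)$ rather than on a direct-sum decomposition.
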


\begin{remark}
Representation categories of twisted group doubles of elementary abelian $2$-groups and  braided tensor equivalences
between them were studied by Goff, Mason, and Ng in \cite{GMN}. The above results about the third cohomology of $V_n$ 
and invertible objects of $\Z(\C(V_n,\, \omega))$ can  be derived from that paper. 
\end{remark}


\section{Computing  Brauer-Picard groups: examples}
\label{section 6}

Let $p$ be a prime. A $p$-group $G$ is called {\em extra special} if its center $Z$ is cyclic of order $p$ 
and $G/Z$ is elementary abelian. Such groups are well known:  for each positive integer $n$  there exist 
precisely two non-isomorphic extra special $p$-groups of order $p^{2n+1}$. 
 
Since $G$ is a central extension of the form
 \begin{equation}
 \label{Fp V2n}
 0 \to \mathbb{F}_p \to G \to V_{2n}\to 0,
 \end{equation}
corresponding to some cohomology class 
$\kappa_G\in H^2(V_n,\, \mathbb{F}_p)$
we can apply the results of Section~\ref{sect Deepak}. Namely, there is a braided equivalence 
\begin{equation}
\label{omega extra}
\Z(\C(G,\,1)) \cong \Z(\C(V_{2n+1},\,\omega_G)),
 \end{equation}
where $\omega_G \in H^3(G,\,k^\times)$ is obtained from $\kappa_G$ as follows. Choose a generator $x_0$
of $H^1(\mathbb{F}_p,\, \mathbb{F}_p) = \mathbb{F}_p^*$ and consider the cup product
\[
H^1(\mathbb{F}_p,\, \mathbb{F}_p) \ot_{\mathbb{F}_p} H^2(V_n,\, \mathbb{F}_p) \to H^3(V_{2n+1},\,\mathbb{F}_p)
: x_0\ot \kappa \mapsto x_0\cup \kappa. 
\]

\begin{proposition}
\label{from kappa to omega}
$\omega_G$ is the image of $x_0\cup \kappa_G$ under the projection
\[
H^3(V_{2n+1},\,\mathbb{F}_p)\to   H^3(V_{2n+1},\,k^\times).
\] 
\end{proposition}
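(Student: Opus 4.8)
The plan is to unwind the braided equivalence \eqref{G and AK} of Section~\ref{sect Deepak} in the special case at hand and to identify the resulting $3$-cocycle explicitly. Since $G$ is an extra special $p$-group, it fits into the central extension \eqref{Fp V2n}, which is exactly an extension of the type considered in Section~\ref{sect Deepak} with normal abelian subgroup $A=\mathbb{F}_p=Z(G)$, quotient $K=V_{2n}$, trivial $K$-action on $A$ (since $A$ is central), and classifying $2$-cocycle $\kappa_G\in Z^2(V_{2n},\,\mathbb{F}_p)$. First I would note that because the $K$-action on $A=\mathbb{F}_p$ is trivial, the dual $K$-module $\widehat{A}$ is also $\mathbb{F}_p$ with trivial action, so $\widehat{A}\rtimes K = \mathbb{F}_p \times V_{2n} = V_{2n+1}$. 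Formula \eqref{kappa rho} then produces the $3$-cocycle on $V_{2n+1}$ given by
\[
\omega\big((\rho_1,x_1),\,(\rho_2,x_2),\,(\rho_3,x_3)\big) = \rho_1\!\left(\kappa_G(x_2,\,x_3)\right),
\]
for $\rho_i\in\widehat{\mathbb{F}_p}\cong\mathbb{F}_p$ and $x_i\in V_{2n}$. Thus \eqref{G and AK} already gives the braided equivalence \eqref{omega extra}, with $\omega_G$ represented by this explicit cocycle.

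The second, and essentially the only nontrivial, step is to recognize the above cocycle as (the image in $H^3(V_{2n+1},\,k^\times)$ of) the cup product $x_0\cup\kappa_G$. Here I would fix a primitive $p$-th root of unity $\xi\in k$ and use the identification $\widehat{\mathbb{F}_p}\cong\mathbb{F}_p$, $\rho\mapsto(a\mapsto\xi^{\rho a})$, so that $\rho_1(\kappa_G(x_2,x_3)) = \xi^{\rho_1\,\kappa_G(x_2,x_3)}$. On the other hand, writing $x_0\in H^1(\mathbb{F}_p,\,\mathbb{F}_p)$ for the identity character and regarding $x_0$ and $\kappa_G$ as cochains on $V_{2n+1}=\mathbb{F}_p\times V_{2n}$ via the two projections, the cup product formula gives
\[
(x_0\cup\kappa_G)\big((\rho_1,x_1),\,(\rho_2,x_2),\,(\rho_3,x_3)\big) = x_0(\rho_1)\,\kappa_G(x_2,\,x_3) = \rho_1\,\kappa_G(x_2,\,x_3)
\]
as an element of $\mathbb{F}_p$. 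Composing with the coefficient map $\mathbb{F}_p\to k^\times$, $a\mapsto\xi^a$, which is precisely the map inducing the projection $H^3(V_{2n+1},\,\mathbb{F}_p)\to H^3(V_{2n+1},\,k^\times)$, we recover exactly the cocycle of the previous paragraph. Hence $\omega_G$ and the image of $x_0\cup\kappa_G$ agree already at the level of cocycles, so certainly as cohomology classes.

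I expect the main obstacle to be purely bookkeeping: making the identification $\widehat{A}\rtimes K = V_{2n+1}$ canonical and tracking how the isomorphism $\widehat{\mathbb{F}_p}\cong\mathbb{F}_p$ and the choice of root of unity $\xi$ interact with the chosen generator $x_0$ of $H^1(\mathbb{F}_p,\,\mathbb{F}_p)=\mathbb{F}_p^*$. In particular one should check that the cup-product sign conventions and the ordering of arguments in \eqref{kappa rho} match, so that the two cocycles coincide on the nose rather than up to an automorphism of $\mathbb{F}_p$; replacing $x_0$ by a scalar multiple only rescales $\kappa_G$ within its cohomology class, which does not affect the statement. Once these identifications are pinned down, the proposition follows immediately from \eqref{kappa rho} and the description of the connecting/coefficient map used in the proof of Proposition~\ref{H3 when p odd}.
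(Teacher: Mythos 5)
Your proposal is correct and follows exactly the same route as the paper, whose entire proof is the one-line citation ``This follows from \eqref{kappa rho}''; you have simply filled in the bookkeeping (trivial $K$-action since $A=Z(G)$, the identification $\widehat{A}\rtimes K=V_{2n+1}$, and the matching of $\rho_1(\kappa_G(x_2,x_3))$ with the image of $x_0\cup\kappa_G$ under the coefficient map $\mathbb{F}_p\to k^\times$) that the authors leave implicit.
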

\begin{proof}
This follows from \eqref{kappa rho}.
\end{proof}

\subsection{Brauer-Picard groups of representation categories of extra special $p$-groups for odd $p$}
\label{BrPic extra odd}

Let $p$ be an odd prime and let $n$ be a positive integer.  
Let $D$ and $Q$ denote extra special groups of order $p^{2n+1}$ and exponents $p$ and $p^2$
respectively.  They can be constructed as follows. Let $M= (\mathbb{Z}/p\mathbb{Z} \times \mathbb{Z}/p\mathbb{Z} ) \rtimes \mathbb{Z}/p\mathbb{Z}$
and $N = \mathbb{Z}/p^2\mathbb{Z}  \rtimes \mathbb{Z}/p\mathbb{Z}$ be non-abelian groups of order $p^3$. Then 
$D$ is the central product of $n$ copies of $M$ and $Q$ is  the central product of $N$ and $n-1$ copies of $M$. 

It is straightforward to compute cohomology classes  $\kappa_D,\,\kappa_Q\in H^2(V_{2n},\, \mathbb{F}_p)$ 
corresponding to  extension \eqref{Fp V2n} with $G=D,\, Q$. We have
\begin{eqnarray}
\kappa_D &=& \sum_{i=1}^n \, x_{2i-1}\,x_{2i}, \label{kappaD}\\
\kappa_Q &=& \sum_{i=1}^n \, x_{2i-1}\,x_{2i} +y_1, \label{kappaQ}
\end{eqnarray}
where generators $x_i,\,y_i$ are defined by \eqref{x i} and \eqref{y i}. 

Recall from Proposition~\ref{H3 when p odd} that there is a $GL_{2n+1}(\mathbb{F}_p)$-module isomorphism
\begin{equation}
\label{H3 now odd}
H^3(V_{2n+1} ,\, k^\times) \cong {\bigwedge}^3 (x_0,\, x_1,\dots, x_{2n}) \oplus \Sym^2(z_0,\, z_1,\dots, z_{2n}),
\end{equation}
hence elements $\omega \in H^3(V_{2n+1} ,\, k^\times)$   correspond to pairs $(\omega_{alt},\, \omega_{sym})$,
where $\omega_{alt}$ is a degree $3$ element of the exterior algebra and 
 $\omega_{sym}$ is a degree $2$ element of the symmetric algebra.

Below we identify $\omega$ with its image under isomorphism \eqref{H3 now odd}.

\begin{proposition}
\label{omegas D,Q}
We have
\begin{eqnarray}
\label{omegaD}
\omega_D &=&  \left( x_0 \wedge \left( \sum_{i=1}^n \, x_{2i-1}\wedge x_{2i}, \right),\, 0 \right), \\
\label{omegaQ}
\omega_Q &=&  \left( x_0  \wedge \left( \sum_{i=1}^n \, x_{2i-1}\wedge x_{2i}, \right),\,  z_0 z_1 \right). 
\end{eqnarray} 
\end{proposition}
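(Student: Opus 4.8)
The plan is to reduce everything to Proposition~\ref{from kappa to omega} together with the explicit description of the surjection $H^3(V_{2n+1},\,\mathbb{F}_p)\to H^3(V_{2n+1},\,k^\times)$ that is worked out inside the proof of Proposition~\ref{H3 when p odd}. By Proposition~\ref{from kappa to omega}, for $G=D$ and $G=Q$ the class $\omega_G$ is the image of $x_0\cup\kappa_G\in H^3(V_{2n+1},\,\mathbb{F}_p)$ under that surjection. So it suffices to compute $x_0\cup\kappa_G$ from \eqref{kappaD} and \eqref{kappaQ}, locate its two components relative to the direct sum decomposition \eqref{H3 VnFp}, and then apply the projection.

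First I would distribute the cup product over the sums in \eqref{kappaD} and \eqref{kappaQ}. Using bilinearity and associativity of $\cup$ in the graded ring $H^\bullet(V_{2n+1},\,\mathbb{F}_p)=\bigwedge(x_0,\dots,x_{2n})\ot_{\mathbb{F}_p}\mathbb{F}_p[y_0,\dots,y_{2n}]$, one gets $x_0\cup\kappa_D=\sum_{i=1}^n x_0\cup x_{2i-1}\cup x_{2i}$, a linear combination of products of three pairwise distinct degree-one generators, hence an element of the summand $\bigwedge^3(x_0,\dots,x_{2n})$ of \eqref{H3 VnFp}; and $x_0\cup\kappa_Q$ equals that same element plus the extra term $x_0\cup y_1$, which lies in the complementary summand $\mathbb{F}_p\langle x_i\cup y_j\mid i,j=0,\dots,2n\rangle$.

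Next I would push these classes through the projection. In the proof of Proposition~\ref{H3 when p odd} the projection is shown to restrict to an isomorphism from $\bigwedge^3(x_0,\dots,x_{2n})$ onto $\bigwedge^3(V_{2n+1}^*)$, while on $\mathbb{F}_p\langle x_i\cup y_j\rangle\cong V_{2n+1}^*\ot V_{2n+1}^*$ it is the symmetrization onto $\Sym^2(V_{2n+1}^*)$, whose kernel is spanned by the antisymmetric combinations $x_k\cup y_l-x_l\cup y_k$ (see \eqref{span of image odd}). Hence $\sum_{i=1}^n x_0\cup x_{2i-1}\cup x_{2i}$ maps to $x_0\wedge\bigl(\sum_{i=1}^n x_{2i-1}\wedge x_{2i}\bigr)$, and $x_0\cup y_1$, not being an antisymmetric combination, maps to the nonzero element $z_0 z_1\in\Sym^2(z_0,\dots,z_{2n})$. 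Reading off the $(\omega_{alt},\,\omega_{sym})$ pair as in \eqref{alt+sym} yields \eqref{omegaD} and \eqref{omegaQ}.

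The genuine content here is already in the two cited propositions, so this step is largely bookkeeping; the only places needing a little care are keeping track of the graded-commutative ring structure so that $x_0\cup x_{2i-1}\cup x_{2i}$ lands in the exterior summand while $x_0\cup y_1$ lands in the $x_i\cup y_j$-summand (the relevant Koszul signs are trivial, since a degree-one and a degree-two class commute), and checking that $x_0\cup y_1$ does not lie in the span \eqref{span of image odd}, so that its image is genuinely $z_0z_1\neq 0$ and not zero.
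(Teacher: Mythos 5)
Your proposal is correct and follows exactly the route the paper takes: the paper's own proof is the one-line observation that the claim follows from \eqref{kappaD}, \eqref{kappaQ}, and Proposition~\ref{from kappa to omega}, and you have simply filled in the bookkeeping (computing $x_0\cup\kappa_G$, locating its components in the decomposition \eqref{H3 VnFp}, and applying the projection described in the proof of Proposition~\ref{H3 when p odd}). The details you supply, including the check that $x_0\cup y_1$ is not in the span \eqref{span of image odd} and hence survives as $z_0z_1$, are accurate.
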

\begin{proof}
This follows from equations \eqref{kappaD}, \eqref{kappaQ}, and Proposition~\ref{from kappa to omega}.
\end{proof}

It follows from Theorem~\ref{exact seq for BrPic in nondeg case} that  the  Brauer-Picard groups $\BrPic(\C(D,\,1))$
and $\BrPic(\C(Q,\,1))$ are extensions of $\Stab(\omega_D)$ and $\Stab(\omega_Q)$,  respectively, by elementary abelian
$p$-groups.  One can find these stabilizers  using the explicit formulas \eqref{omegaD} and \eqref{omegaQ}.
Below we do it for $\omega_D$. 

Let $V$ be a finite dimensional vector space. For any subgroup $G\subset GL(V)$ define the corresponding
affine group $AffG = V \rtimes G$. 

\begin{proposition}
\label{Stab D}
Suppose $n>1$. Then 
\[
\Stab(\omega_{D}) \cong AffSp_{2n}(\mathbb{F}_p) \rtimes \mathbb{F}_p^\times.
\]	
Here $Sp_{2n}(\mathbb{F}_p)$ denotes the symplectic group. 
\end{proposition}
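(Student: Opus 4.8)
The plan is to compute the stabilizer in $GL_{2n+1}(\mathbb{F}_p)$ of the element $\omega_D \in \bigwedge^3(V_{2n+1}^*)$ given by $x_0 \wedge \bar\omega$, where $\bar\omega = \sum_{i=1}^n x_{2i-1}\wedge x_{2i}$ is the standard symplectic form on the $2n$-dimensional subspace $W^* = \langle x_1,\dots,x_{2n}\rangle$ of $V_{2n+1}^*$. Since $\omega_{D,sym}=0$, the stabilizer $\Stab(\omega_D)$ is just the stabilizer of $\omega_{D,alt}$. First I would observe that $\Rad(\omega_D) = 0$: indeed $\iota_v(\omega_D)$ for $v \in V_{2n+1}$ equals $\langle v, x_0\rangle \bar\omega - x_0 \wedge \iota_v(\bar\omega)$ (interpreting $\iota_v(\bar\omega)$ as the contraction of the symplectic form), and nondegeneracy of $\bar\omega$ plus the fact that $\bar\omega \neq 0$ forces $v=0$. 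This confirms we are in the nondegenerate situation of Theorem~\ref{exact seq for BrPic in nondeg case}. Next, I would identify a canonical hyperplane attached to $\omega_D$: the set $\{v \in V_{2n+1} \mid \iota_v(\omega_D) \wedge \omega_D = 0\}$, or more simply the annihilator of the linear span of all $x$ such that $x$ divides $\omega_D$ in the exterior algebra. Because $\omega_D = x_0 \wedge \bar\omega$ with $\bar\omega$ of full rank $2n$, the element $x_0 \in V_{2n+1}^*$ is, up to scalar, canonically determined by $\omega_D$ (it is the unique line $L \subset V_{2n+1}^*$ such that $\omega_D$ lies in $L \wedge \bigwedge^2 V_{2n+1}^*$ — here one uses $n>1$, so that $\bar\omega$ has rank $\geq 4$ and cannot itself be decomposable, ruling out other factorizations). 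Hence any $g \in \Stab(\omega_D)$ preserves the line $\langle x_0\rangle$, acting by some scalar $\chi(g)\in \mathbb{F}_p^\times$, and preserves the $2n$-dimensional subspace $W = \Ker(x_0) \subset V_{2n+1}$.

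With this structure in place, I would set up the exact sequence computing $\Stab(\omega_D)$. Writing $V_{2n+1} = W \oplus \langle e_0\rangle$ where $e_0$ is dual to $x_0$, an element $g$ preserving $\langle x_0\rangle$ and $W$ has block form $g(e_0) = \lambda^{-1} e_0 + w_0$ for some $w_0 \in W$ and $\lambda = \chi(g)$, and $g|_W \in GL(W)$ (note: I should be careful about whether $x_0 \circ g = \lambda x_0$ on all of $V$, which pins down the action on the $e_0$-component up to the translation $w_0$). Applying $g$ to $\omega_D = x_0 \wedge \bar\omega$ and expanding $g^*\bar\omega$ using the decomposition $V_{2n+1}^* = W^* \oplus \langle x_0\rangle$, the condition $g^*\omega_D = \omega_D$ becomes: $g|_W$ scaled appropriately preserves $\bar\omega$, i.e. $g|_W \in \lambda^{-1}\cdot \mathrm{something}$, and the translation part $w_0$ is unconstrained, contributing an extra $\mathbb{F}_p^{2n}$. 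Concretely the upshot should be a split exact sequence
\begin{equation*}
0 \to W \to \Stab(\omega_D) \to \{g|_W \in GL(W) : g^*\bar\omega = \lambda\,\bar\omega \text{ for some }\lambda\} \to 0,
\end{equation*}
and the group of conformal-symplectic transformations $\{h \in GL_{2n}(\mathbb{F}_p): h^*\bar\omega = \lambda \bar\omega,\ \lambda \in \mathbb{F}_p^\times\}$ is the general symplectic group $GSp_{2n}(\mathbb{F}_p)$, which fits in $0 \to Sp_{2n}(\mathbb{F}_p) \to GSp_{2n}(\mathbb{F}_p) \to \mathbb{F}_p^\times \to 0$ with the similitude character. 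Splitting off the $W$-translations and then the multiplier gives $\Stab(\omega_D) \cong (W \rtimes Sp_{2n}(\mathbb{F}_p)) \rtimes \mathbb{F}_p^\times = AffSp_{2n}(\mathbb{F}_p)\rtimes \mathbb{F}_p^\times$, as claimed. I would present the semidirect product structure by exhibiting explicit splittings: the symplectic group sits inside as $h \oplus \id_{\langle e_0\rangle}$, the scalars as the diagonal $\diag(1,\dots,1,\lambda^{-1})$ composed with a symplectic similitude normalization, and the translations as unipotent maps $e_0 \mapsto e_0 + w_0$.

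The main obstacle I anticipate is the verification that $x_0$ is canonically recovered from $\omega_D$ — i.e. that no $g \in GL_{2n+1}(\mathbb{F}_p)$ can send $\omega_D$ to itself while moving the line $\langle x_0\rangle$. This is where the hypothesis $n>1$ is essential, and it requires a short argument about the structure of the wedge $x_0 \wedge \bar\omega$: one must show that the only way to write $\omega_D = \eta \wedge \zeta$ with $\eta \in V_{2n+1}^*$ a nonzero covector and $\zeta \in \bigwedge^2 V_{2n+1}^*$ is, up to scalar, $\eta = x_0$. Equivalently, $\langle x_0 \rangle$ equals the intersection over all ways of so factoring, or it equals the "kernel" $\{\eta : \eta \wedge \omega_D = 0\}$. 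For $n=1$ this fails (the argument then genuinely differs, which is presumably why the proposition assumes $n>1$). I would handle this either by the $\eta \wedge \omega_D = 0$ characterization — computing $\eta \wedge x_0 \wedge \bar\omega = 0$ and using that $\bar\omega^2 = 2\sum_{i<j} x_{2i-1}x_{2i}x_{2j-1}x_{2j} \neq 0$ for $n\geq 2$ over $\mathbb{F}_p$ with $p$ odd — or by a rank/invariant-theoretic argument on decomposable versus indecomposable $2$-forms. Once that lemma is secured, the rest is the routine block-matrix bookkeeping sketched above, and I would relegate the explicit cocycle/coordinate computations to a line or two.
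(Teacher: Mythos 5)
Your proposal is correct and follows the same overall architecture as the paper's proof: first show that the hyperplane $W=\Ker(x_0)=\langle e_1,\dots,e_{2n}\rangle$ (equivalently the line $\langle x_0\rangle\subset V_{2n+1}^*$) is canonically attached to $\omega_D$ and hence preserved by $\Stab(\omega_D)$, then read off the block structure, arriving at translations by $W$, a copy of $Sp_{2n}(\mathbb{F}_p)$, and the similitude scalar $\lambda$ acting by $e_0\mapsto\lambda e_0$ with $s\mapsto\lambda^{-1}s$ on the dual of $W$. The one genuine difference is the invariant used in the first step. The paper works on $V$ and stratifies by the rank of the contraction $\iota_v(\omega_D)\in\bigwedge^2(V_{2n+1}^*)$: nonzero $v\in W$ are exactly those with $\iota_v(\omega_D)$ of rank $2$, while any $v$ with nonzero $e_0$-component gives rank at least $2n>2$. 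You instead work on $V^*$ and characterize $\langle x_0\rangle$ as the kernel of $\eta\mapsto\eta\wedge\omega_D$, i.e.\ as the unique line dividing $\omega_D$; this is an equally valid (and arguably cleaner, being a kernel rather than a rank stratification) route, and both arguments use $n>1$ in the same essential way. One small imprecision: to show $\Ker(\eta\mapsto\eta\wedge\omega_D)=\langle x_0\rangle$ you need injectivity of $\xi\mapsto\xi\wedge\bar\omega$ on $W^*$, and the fact that $\bar\omega^2\neq 0$ does not by itself deliver this; the direct verification (normalize $\xi=x_1$ by a symplectic change of basis and observe $x_1\wedge\bar\omega=x_1\wedge(x_3\wedge x_4+\cdots)\neq 0$ for $n\geq 2$) is what you want there. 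The concluding block-matrix bookkeeping and the identification $W\rtimes GSp_{2n}(\mathbb{F}_p)\cong AffSp_{2n}(\mathbb{F}_p)\rtimes\mathbb{F}_p^\times$ match the paper's generators exactly.
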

\begin{proof}
Let $\{e_0,\, e_1,\dots, e_{2n}\}$ be the standard basis of $V_{2n+1}$ so that
$\{x_0,\, x_1,\dots, x_{2n}\}$ is the dual basis of $V_{2n+1}^*$.  By the rank of $\sum_i a_i\wedge b_i\in \bigwedge^2\, V^*$
we mean the rank of the associated linear endomorphism of $v$ given by
\[
V \to V: v \mapsto \sum_i\, \langle v,\, a_i\rangle b_i - \langle v,\, b_i\rangle a_i.
\]
For every $g\in \Stab(\omega_D)$
and every $v\in V_{2n+1}$ the ranks of $\iota_{g(v)}(\omega_D)$ and $\iota_{v}(\omega_D)$ are equal. 
It follows that  the span of $\{e_1,\dots, e_{2n}\}$ is stable
under $\Stab(\omega_D)$.  Indeed, non-zero vectors $v$ lying in this span are characterized by the property that $\iota_v(\omega_D)$
has rank $2$.  Let $s= \sum_{i=1}^n \, x_{2i-1}\wedge x_{2i}$ (it corresponds to the symplectic form).
An element $g\in \Stab(\omega_D)$ must map $e_0$  to $\lambda e_0 + \sum_{i=1}^{2n} \,v_i e_i$ and the dual of
the  restriction of $g$ on the span of $\{e_1,\dots, e_{2n}\}$ must map $s$ to $\lambda^{-1} s$, where $v= (v_1,\dots v_{2n})$ 
is an arbitrary vector in $\mathbb{F}_p^{2n}$ and $\lambda\in \mathbb{F}_p^\times$. 

Thus,  $\Stab(\omega_{D})$ is generated by  matrices  of the form 
	\[ 
	 \left[
    \begin{array}{r@{}c|c@{}l}
  &	1 & 0 \\ \hline
  &    v &  
	\begin{smallmatrix} 
	  &&&&&& \\	&&&&&& \\
	  &&&&&& \\	&&&&&& \\  
      &&&	\huge{{M}} &&& \\
	  &&&&&& \\	&&&&&& \\      
      &&&&&& \\	&&&&&&
    \end{smallmatrix}
          &
    \end{array} 
\right]
	\qquad \text{ and } \qquad
	\left[
    \begin{array}{r@{}c|c@{}l}
  &	\lambda & 0 \\ \hline
  &    0 &  
       \begin{smallmatrix}\rule{0pt}{2ex}
        \left(\begin{smallmatrix}\rule{0pt}{2ex}
        \lambda^{-1} & 0 \\
        0	&	1
      \end{smallmatrix} \right)
         & & 0 \\
          &\ddots&\\
        0 & & 
        \left(\begin{smallmatrix}\rule{0pt}{2ex}
        \lambda^{-1} & 0 \\
        0	&	1
      \end{smallmatrix} \right)
      \end{smallmatrix}    &
    \end{array} 
\right],
	\]
where $v \in \mathbb{F}_p^{2n}$, $M \in Sp_{2n}(\mathbb{F}_p)$, and $\lambda \in \mathbb{F}_p^\times$.
This clearly implies the statement. 
\end{proof}

When $n=1$, i.e., when $D,\,Q$  are extra special groups of order $p^3$, there is a neat precise 
description of their Brauer-Picard groups.

\begin{proposition}
\label{Stab D,Q  n=1}
Let $n=1$. Then 
\[
\Stab(\omega_D)=SL_3(\mathbb{F}_p)\quad  \text{and} \quad   \Stab(\omega_Q)= AffO_2(\mathbb{F}_p).
\]
\end{proposition}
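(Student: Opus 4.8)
The plan is to compute each stabilizer by analyzing the action of $GL_3(\mathbb{F}_p)$ on the pair $(\omega_{alt},\,\omega_{sym})$ given by Proposition~\ref{omegas D,Q}, using the following two coordinate-free handles: the $3$-form $\omega_{alt}=x_0\wedge x_1\wedge x_2$ on $V_3$, and for $\omega_Q$ the rank-$2$ quadratic form $z_0z_1$ on $V_3^*$. For $\omega_D=(x_0\wedge x_1\wedge x_2,\,0)$, the stabilizer is simply the stabilizer in $GL_3(\mathbb{F}_p)$ of a nonzero element of $\bigwedge^3(V_3^*)$. Since $\dim V_3=3$, an element $g\in GL_3(\mathbb{F}_p)$ acts on $\bigwedge^3(V_3^*)\cong\mathbb{F}_p$ by multiplication by $\det(g)^{-1}$ (equivalently $\det(g)$, depending on convention), so $g$ fixes $x_0\wedge x_1\wedge x_2$ if and only if $\det(g)=1$. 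Hence $\Stab(\omega_D)=SL_3(\mathbb{F}_p)$. This case is essentially immediate and recovers \eqref{BrPic d of order p3} of \cite{R} via Theorem~\ref{exact seq for BrPic in nondeg case}.

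For $\omega_Q=(x_0\wedge x_1\wedge x_2,\, z_0z_1)$, I would proceed as follows. A matrix $g\in GL_3(\mathbb{F}_p)$ lies in $\Stab(\omega_Q)$ iff it simultaneously stabilizes the top form and the symmetric bilinear form $z_0z_1$ (a hyperbolic plane in the $z$-variables, i.e.\ in $V_3^*$, with one-dimensional radical spanned by $z_2$). First I would identify the radical: the rank-$2$ quadratic form $z_0z_1$ on $V_3^*$ has radical $\langle z_2\rangle$, so any $g$ preserving $\omega_{sym}$ must preserve $\langle z_2\rangle\subset V_3^*$, equivalently the corresponding hyperplane (the kernel of $z_2$, i.e.\ $\langle e_0,e_1\rangle$) in $V_3$. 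On that hyperplane $g$ acts preserving the nondegenerate form $z_0z_1$ up to a scalar dictated by its action on $z_2$; the constraint from the top form $x_0\wedge x_1\wedge x_2$ ties these scalars together, as in the proof of Proposition~\ref{Stab D}. Working this out in coordinates, $g$ must have block-triangular shape: it fixes $\langle e_0,e_1\rangle$, acts on it by a matrix in $O_2(\mathbb{F}_p)$ rescaled by a character, sends $e_2\mapsto$ (arbitrary translation) $+\mu e_2$, and the $\det=1$-type condition forces the scalars to cancel so that the effective group on $\langle e_0,e_1\rangle$ is exactly $O_2(\mathbb{F}_p)$ while $e_2$ contributes an $\mathbb{F}_p^2$ of translations. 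This yields $\Stab(\omega_Q)\cong \mathbb{F}_p^2\rtimes O_2(\mathbb{F}_p)=AffO_2(\mathbb{F}_p)$.

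The main obstacle is the careful bookkeeping in the second case: one must correctly track how the determinant condition coming from $\omega_{alt}$ interacts with the similitude scalar of the orthogonal action on the non-degenerate part and with the eigenvalue $\mu$ of $g$ on $e_2$, and check that these constraints combine to kill exactly the dilation directions, leaving precisely $O_2$ (not $GO_2$) together with the full translation subgroup. I would set up an explicit block matrix $g=\begin{pmatrix} A & b \\ 0 & \mu\end{pmatrix}$ with $A\in GL_2$, verify that preservation of $z_0z_1$ forces $A^{\mathsf T}JA=\mu^{-2}J$ (with $J$ the hyperbolic form) while preservation of the top form forces $\mu\det(A)=1$, and then eliminate $\mu$ to get $A\in O_2(\mathbb{F}_p)$ and $b\in\mathbb{F}_p^2$ free — giving the claimed semidirect product. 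A sanity check against \eqref{BrPic d8 and q8} (the case $p=2$, $n=1$, where $Q$ is the quaternion group and $\BrPic=S_3$) and against Proposition~\ref{Stab D} specialized to $n=1$ for $\omega_D$ confirms the answer.
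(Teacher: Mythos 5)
Your computation of $\Stab(\omega_D)$ is correct and is essentially the paper's own argument: $GL_3(\mathbb{F}_p)$ acts on the one-dimensional space $\bigwedge^3(V_3^*)$ through the determinant (up to the usual inverse coming from dualizing), so the stabilizer of a nonzero top form is exactly $SL_3(\mathbb{F}_p)$.

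Your plan for $\Stab(\omega_Q)$, however, contains a genuine error in the dualization, and the block ansatz you set up is the transpose of the correct one. The element $z_0z_1\in\Sym^2(V_3^*)$ is a quadratic form on $V_3$ (not on $V_3^*$), namely $q(v)=v_0v_1$; its radical is the line $\langle e_2\rangle\subset V_3$, so an element of $\Stab(q)$ must preserve the line $\langle e_2\rangle$ (equivalently the plane $\langle z_0,z_1\rangle\subset V_3^*$), \emph{not} the plane $\langle e_0,e_1\rangle\subset V_3$. For instance, $g(e_0)=e_0+e_2$, $g(e_1)=e_1$, $g(e_2)=e_2$ lies in $\Stab(\omega_Q)$ but does not preserve $\langle e_0,e_1\rangle$, whereas $g(e_2)=e_2+e_0$ (identity elsewhere) preserves $\langle e_0,e_1\rangle$ but sends $q$ to $v_0v_1+v_1v_2$. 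Concretely, if you carry out the computation inside your ansatz $g=\bigl(\begin{smallmatrix} A& b\\ 0&\mu\end{smallmatrix}\bigr)$, the condition $q\circ g=q$ forces $b=0$: the cross terms $v_2\bigl(b_1(Av')_0+b_0(Av')_1\bigr)$ must vanish identically and $A$ is invertible. So your own verification, done honestly, would produce only $O_2(\mathbb{F}_p)$ rather than the claimed affine group, and the relation $A^{\mathsf T}JA=\mu^{-2}J$ does not arise either --- the restriction of $q\circ g=q$ to the relevant subspace gives $A^{\mathsf T}JA=J$ on the nose, with $\mu$ entering only through the determinant constraint $\mu\det(A)=1$ coming from $\omega_{alt}$. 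The correct parametrization is the transposed one: $g(e_i)=(Ae_i)+c_ie_2$ for $i=0,1$ with $A\in O_2(\mathbb{F}_p)$, and $g(e_2)=\mu e_2$ with $\mu=\det(A)^{-1}$; here $c\in\mathbb{F}_p^2$ is genuinely free because $q$ does not see the $e_2$-coordinate. This yields $\mathbb{F}_p^2\rtimes O_2(\mathbb{F}_p)=AffO_2(\mathbb{F}_p)$, so the statement is salvageable, but as written the argument would fail. A minor further point: the proposed sanity check against \eqref{BrPic d8 and q8} is not available, since Proposition~\ref{omegas D,Q} and the decomposition \eqref{H3Ep=odd} are specific to odd $p$; the case $p=2$ is treated separately in Section~\ref{BrPic extra 2}, and indeed $AffO_2(\mathbb{F}_2)$ has order $8$, not $|S_3|=6$.
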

\begin{proof}
For any $A\in GL_3(\mathbb{F}_p)$ we have 
\[
A (x_0\wedge x_1\wedge x_2) = \det(A)(x_0\wedge x_1\wedge x_2)
\]
in $\bigwedge^3(x_0,\, x_1,\, x_2)$, which proves the first equality. The group of matrices 
in $SL_3(\mathbb{F}_p)$ stabilizing  $z_0 z_1\in \Sym^2(z_0,\, z_1,\, z_2)$ 
is precisely the $2$-dimensional affine orthogonal group, which proves the second equality.
\end{proof}

\begin{corollary}
\label{BrPic D, Q n=1}
Let $D,\,Q$ be the extra special groups of order $p^3$ of exponents $p$ and $p^2$, respectively.
Then 
\begin{equation}
\label{Brianna's BrPic}
\BrPic(\C(D,\,1)) \cong SL_3(\mathbb{F}_p) \quad \text{ and } \quad \BrPic(\C(Q,\,1)) \cong AffO_2(\mathbb{F}_p).
\end{equation}
\end{corollary}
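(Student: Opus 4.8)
The plan is to deduce Corollary~\ref{BrPic D, Q n=1} directly from Theorem~\ref{exact seq for BrPic in nondeg case} together with Proposition~\ref{Stab D,Q  n=1}, so the main work is to check that the hypothesis of Theorem~\ref{exact seq for BrPic in nondeg case} applies and that the exact sequence splits (or at least collapses) to give the claimed isomorphisms. First I would record that by the braided equivalence \eqref{omega extra} we have $\BrPic(\C(D,\,1))\cong \BrPic(\C(V_3,\,\omega_D))$ and $\BrPic(\C(Q,\,1))\cong \BrPic(\C(V_3,\,\omega_Q))$, with $\omega_D=(x_0\wedge x_1\wedge x_2,\,0)$ and $\omega_Q=(x_0\wedge x_1\wedge x_2,\,z_0z_1)$ by Proposition~\ref{omegas D,Q} specialized to $n=1$. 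In both cases the alternating part is $(\omega_D)_{alt}=(\omega_Q)_{alt}=x_0\wedge x_1\wedge x_2$, which is visibly non-degenerate on the $3$-dimensional space $V_3$ (its interior product $\iota_v$ is injective in $v$), so Theorem~\ref{exact seq for BrPic in nondeg case} applies and yields, for $G=D$ and $G=Q$ respectively, an exact sequence
\[
0\to V_3 \xrightarrow{\ \iota(\omega_{alt})\ } {\bigwedge}^2(V_3^*) \to \BrPic(\C(G,\,1)) \to \Stab_{V_3}(\omega_G)\to 0.
\]

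Next I would observe that for $V_3$ both $V_3$ and $\bigwedge^2(V_3^*)$ have dimension $3$ over $\mathbb{F}_p$, and since $\omega_{alt}$ is non-degenerate the map $\iota(\omega_{alt})\colon V_3\to\bigwedge^2(V_3^*)$ is an isomorphism; hence the kernel of $\BrPic(\C(G,\,1))\to\Stab_{V_3}(\omega_G)$ is trivial, and the exact sequence reduces to an isomorphism $\BrPic(\C(G,\,1))\cong \Stab_{V_3}(\omega_G)$. It then remains only to identify these stabilizers. By definition $\Stab_{V_3}(\omega_G)=\Stab((\omega_G)_{alt})\cap\Stab((\omega_G)_{sym})$ inside $GL_3(\mathbb{F}_p)$. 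For $G=D$ the symmetric part is $0$, so $\Stab_{V_3}(\omega_D)=\Stab(x_0\wedge x_1\wedge x_2)=SL_3(\mathbb{F}_p)$, since $A$ acts on $\bigwedge^3(V_3^*)$ by $\det(A)$. For $G=Q$ we intersect $SL_3(\mathbb{F}_p)$ with the stabilizer of the rank-one-plus-degenerate quadratic form $z_0z_1\in\Sym^2(V_3^*)$; this is precisely the statement of Proposition~\ref{Stab D,Q  n=1}, giving $\Stab_{V_3}(\omega_Q)=AffO_2(\mathbb{F}_p)$.

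Combining these identifications gives \eqref{Brianna's BrPic}. I would then add the sanity check that for $p$ not too small these recover the known small cases: for instance, the second equality is consistent with \eqref{BrPic d8 and q8} in the sense that $AffO_2$ over a field with few elements degenerates appropriately, though strictly we only claim the statement for odd $p$ here. The only point requiring genuine care is the dimension count showing the connecting map $\iota(\omega_{alt})$ is an isomorphism when $\dim V=3$ — this is special to $n=1$, since $\dim\bigwedge^2(V_n^*)=\binom{2n+1}{2}$ exceeds $2n+1=\dim V_n$ for $n>1$ — and the verification in Proposition~\ref{Stab D,Q  n=1} that the joint stabilizer of $x_0\wedge x_1\wedge x_2$ and $z_0z_1$ is exactly $AffO_2(\mathbb{F}_p)$, which I expect to be the main (though still routine linear-algebra) obstacle.
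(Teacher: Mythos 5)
Your proposal is correct and follows essentially the same route as the paper: the paper's proof likewise observes that $\iota(\omega_{alt})$ in the exact sequence \eqref{exact sequence for Vn} is an isomorphism for $\omega=\omega_D,\,\omega_Q$ when $n=1$ (so that $\BrPic(\C(V_3,\omega))=\Stab(\omega)$) and then cites Proposition~\ref{Stab D,Q  n=1} for the identification of the stabilizers. Your additional remarks on the dimension count $\dim\bigwedge^2(V_3^*)=3=\dim V_3$ and the reduction via \eqref{omega extra} simply make explicit what the paper leaves implicit.
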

\begin{proof}
Note that the homomorphisms $\iota(\omega_{alt})$ for $\omega= \omega_D,\, \omega_Q$ 
in \eqref{exact sequence for Vn} are isomorphisms, so $\BrPic(\C(V_{3},\,\omega)) =\Stab(\omega)$
and the isomorphisms follow from Proposition~\ref{Stab D,Q  n=1}.
\end{proof}

\begin{remark}
The first of isomorphisms in \eqref{Brianna's BrPic} was established by  Riepel in \cite{R}
by different methods.
\end{remark}

\subsection{Brauer-Picard groups of representation categories of extra special $2$-groups}
\label{BrPic extra 2}

Let $D$ and $Q$ denote extra special groups of order $2^{2n+1}$.  They can be constructed as follows.  
The group $D$ is the central product of $n$ copies of the dihedral group of order $8$  and $Q$ 
is the central product $n-1$ copies of the dihedral group of order $8$ and one copy of the quaternion group.

It is straightforward to compute cohomology classes  $\kappa_D,\,\kappa_Q\in H^2(V_{2n},\, \mathbb{F}_2)$ 
corresponding to  extension \eqref{Fp V2n} with $G=D,\, Q$. We have
\begin{eqnarray}
\kappa_D &=& \sum_{i=1}^n \, x_{2i-1}\,x_{2i}, \label{kappaD p=2}\\
\kappa_Q &=& \sum_{i=1}^n \, x_{2i-1}\,x_{2i} \,+\, x_1^2 \,+\, x_2^2, \label{kappaQ p=2}
\end{eqnarray}
where generators $x_i$ are defined by \eqref{x i when p=2}. 

Recall from Proposition~\ref{H3 when p=2} that there is an isomorphism of $GL_{2n+1}(\mathbb{F}_2)$-modules:
\begin{equation}
\label{H3 now even}
H^3(V_{2n+1} ,\, k^\times) \cong \Sym^3 (x_0, x_1,\dots, x_{2n}) / \mathbb{F}_2 \langle x_i^2x_j+x_ix_j^2 \mid \, i,j=0,\dots,2n\rangle.
\end{equation}

Below we identify $\omega$ with its image under \eqref{H3 now even}.

\begin{proposition}
\label{omegas D,Q p=2}
We have
\begin{eqnarray}
\omega_D &=& \sum_{i=1}^n \, x_0\,x_{2i-1}\,x_{2i}, \label{omegaD p=2}\\
\omega_Q &=& \sum_{i=1}^n \, x_0\,x_{2i-1}\,x_{2i} \,+\, x_0\,x_1^2 \,+\, x_0\,x_2^2. \label{omegaQ p=2}
\end{eqnarray} 
\end{proposition}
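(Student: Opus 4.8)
The plan is to obtain this as a direct consequence of Proposition~\ref{from kappa to omega} combined with the explicit $2$-cocycles \eqref{kappaD p=2} and \eqref{kappaQ p=2}; all of the substantive work (the Morita equivalence of Section~\ref{sect Deepak} encoded in \eqref{kappa rho}, and the description of $H^3(V_{2n+1},k^\times)$ in Proposition~\ref{H3 when p=2}) is already available, so the argument is essentially an unwinding of definitions, entirely parallel to the proof of Proposition~\ref{omegas D,Q} in the odd case.

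First I would set up notation: under the product decomposition $V_{2n+1}=\mathbb{F}_2\times V_{2n}$ coming from the central extension \eqref{Fp V2n} of an extra special $2$-group (whose quotient action on the center is trivial), the chosen generator $x_0$ of $H^1(\mathbb{F}_2,\mathbb{F}_2)$ together with the generators $x_1,\dots,x_{2n}$ of $H^\bullet(V_{2n},\mathbb{F}_2)$ become the standard polynomial generators of $H^\bullet(V_{2n+1},\mathbb{F}_2)=\mathbb{F}_2[x_0,x_1,\dots,x_{2n}]$, with cup product equal to polynomial multiplication. By Proposition~\ref{from kappa to omega}, $\omega_D$ and $\omega_Q$ are the images of $x_0\cup\kappa_D$ and $x_0\cup\kappa_Q$ under the projection $H^3(V_{2n+1},\mathbb{F}_2)\to H^3(V_{2n+1},k^\times)$.

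Next I would carry out the two one-line computations in the polynomial ring. Using \eqref{kappaD p=2},
\[
x_0\cup\kappa_D=x_0\cdot\sum_{i=1}^n x_{2i-1}x_{2i}=\sum_{i=1}^n x_0\,x_{2i-1}\,x_{2i},
\]
and using \eqref{kappaQ p=2},
\[
x_0\cup\kappa_Q=x_0\cdot\left(\sum_{i=1}^n x_{2i-1}x_{2i}+x_1^2+x_2^2\right)=\sum_{i=1}^n x_0\,x_{2i-1}\,x_{2i}+x_0\,x_1^2+x_0\,x_2^2,
\]
both lying in $\Sym^3(x_0,\dots,x_{2n})=H^3(V_{2n+1},\mathbb{F}_2)$. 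Finally I would invoke Proposition~\ref{H3 when p=2}, which identifies the projection $H^3(V_{2n+1},\mathbb{F}_2)\to H^3(V_{2n+1},k^\times)$ with the quotient map $\Sym^3(x_0,\dots,x_{2n})\to\Sym^3(x_0,\dots,x_{2n})/\mathbb{F}_2\langle x_i^2x_j+x_ix_j^2\rangle$; applying it to the two polynomials above yields exactly the cosets recorded in \eqref{omegaD p=2} and \eqref{omegaQ p=2}, which is the assertion.

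There is no real obstacle here; the only point demanding attention is bookkeeping, namely checking that the basis with respect to which \eqref{kappaD p=2} and \eqref{kappaQ p=2} are written agrees with the generators $x_0,x_1,\dots,x_{2n}$ used in \eqref{H3 now even}, and that the identification $y_i=x_i^2$ for $p=2$ is the one already built into Proposition~\ref{H3 when p=2}. It is not necessary to check that the displayed representatives are in any ``reduced'' form, since $\omega_D,\omega_Q$ are by definition the classes of $x_0\cup\kappa_D$ and $x_0\cup\kappa_Q$, so any choice of polynomial representative describes the same element of $H^3(V_{2n+1},k^\times)$.
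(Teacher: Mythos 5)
Your proposal is correct and follows exactly the paper's own route: the paper's proof of this proposition is the one-line observation that it follows from \eqref{kappaD p=2}, \eqref{kappaQ p=2}, and Proposition~\ref{from kappa to omega}, and you have simply unwound that same argument (cup with $x_0$, then project via the identification of Proposition~\ref{H3 when p=2}) in explicit detail. Nothing is missing.
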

\begin{proof}
This follows from equations \eqref{kappaD p=2}, \eqref{kappaQ p=2}, and Proposition~\ref{from kappa to omega}.
\end{proof}

Recall from Proposition~\ref{H3 when p even} that there is a short exact sequence of $GL_{2n+1}(\mathbb{F}_2)$-modules
\begin{equation}
0\to \Sym^2(x_0,\, x_1,\dots, x_{2n}) \to H^3(V_{2n+1} ,\, k^\times)  \xrightarrow{\pi} {\bigwedge}^3 (x_0,\, x_1,\dots, x_{2n}) \to 0.
\end{equation}
\begin{proposition}
We have $ \pi(\omega_D) =  \pi(\omega_Q)= \omega_{alt}$, where
\begin{equation}
\label{omega alt p=2}
\omega_{alt} =  x_0 \wedge \left( \sum_{i=1}^n \, x_{2i-1}\wedge x_{2i}, \right).
\end{equation}
\end{proposition}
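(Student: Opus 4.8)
The plan is to read both images off directly from the explicit description of $\pi$ obtained in the proof of Proposition~\ref{H3 when p even}. Recall from there that, under the isomorphism $H^3(V_{2n+1},\,k^\times)\cong \Sym^3(x_0,\dots,x_{2n})/\mathbb{F}_2\langle x_i^2x_j+x_ix_j^2\rangle$ of Proposition~\ref{H3 when p=2}, the kernel of $\pi$ is precisely the $GL_{2n+1}(\mathbb{F}_2)$-submodule $\Sym^2(x_0,\dots,x_{2n})$, spanned by the classes of the monomials $x_i^2x_j$ with $i,j\in\{0,\dots,2n\}$, while $\pi$ carries the class of a squarefree monomial $x_ix_jx_k$ with $i,j,k$ pairwise distinct to the decomposable element $x_i\wedge x_j\wedge x_k\in\bigwedge^3(x_0,\dots,x_{2n})$.

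First I would apply this to $\omega_D=\sum_{i=1}^n x_0\,x_{2i-1}\,x_{2i}$ from \eqref{omegaD p=2}: for each $i$ the monomial $x_0\,x_{2i-1}\,x_{2i}$ is squarefree with three pairwise distinct indices, so $\pi(x_0\,x_{2i-1}\,x_{2i})=x_0\wedge x_{2i-1}\wedge x_{2i}$. Summing over $i$ and factoring out $x_0$ yields $\pi(\omega_D)=x_0\wedge\bigl(\sum_{i=1}^n x_{2i-1}\wedge x_{2i}\bigr)$, which is exactly the element $\omega_{alt}$ of \eqref{omega alt p=2}.

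Next, for $\omega_Q=\omega_D+x_0\,x_1^2+x_0\,x_2^2$ from \eqref{omegaQ p=2}, I would observe that each of the extra monomials $x_0\,x_1^2=x_1^2\,x_0$ and $x_0\,x_2^2=x_2^2\,x_0$ has the form $x_i^2x_j$ and hence lies in $\Ker(\pi)=\Sym^2(x_0,\dots,x_{2n})$. Therefore $\pi(\omega_Q)=\pi(\omega_D)=\omega_{alt}$, which finishes the proof.

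The argument is essentially bookkeeping, so there is no genuine obstacle; the only point needing minimal attention is to align the indexing conventions between the description of $\pi$ and the presentation of $\Sym^2(x_0,\dots,x_{2n})$ as a submodule of $H^3(V_{2n+1},\,k^\times)$, and even this is automatic here since over $\mathbb{F}_2$ signs and orderings of wedge factors do not intervene.
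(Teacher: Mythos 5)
Your proposal is correct and follows the paper's own argument: the paper likewise just invokes the explicit description of $\pi$ from the proof of Proposition~\ref{H3 when p even} (kernel spanned by the classes of $x_i^2x_j$, squarefree monomials mapping to the corresponding wedge products) and declares the computation immediate. You have merely spelled out the bookkeeping that the paper leaves to the reader.
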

\begin{proof}
The homomorphism $\pi:H^3(V_{2n+1} ,\, k^\times)  \to {\bigwedge}^3(V_{2n+1}^*)$ is described explicitly in the proof of 
Proposition~\ref{H3 when p even}. The result is immediate from there.
\end{proof}

In view of Corollary~\ref{exact seq for BrPic in nondeg case p=2}  the  Brauer-Picard groups of $\C(D,\,1)$
and $\C(Q,\,1)$ are extensions of $\Stab(\omega_D)$ and $\Stab(\omega_Q)$,  respectively, by elementary abelian
$2$-groups.  The above stabilizers can be found using the explicit formulas \eqref{omegaD p=2} and \eqref{omegaQ p=2}.

We consider $\Stab(\omega_D)$ and $\Stab(\omega_Q)$ as subgroups of $\Stab(\omega_{alt})$.

\begin{proposition}
\label{Stab alt p=2}
Suppose $n>1$. Then 
\[
\Stab(\omega_{alt}) \cong AffSp_{2n}(\mathbb{F}_2).
\]	
\end{proposition}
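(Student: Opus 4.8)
The plan is to follow the proof of Proposition~\ref{Stab D} almost verbatim; over $\mathbb{F}_2$ it simplifies because $\mathbb{F}_2^\times$ is trivial and there is no symmetric part to carry along. Fix the standard basis $\{e_0,e_1,\dots,e_{2n}\}$ of $V_{2n+1}$ with dual basis $\{x_0,x_1,\dots,x_{2n}\}$, set $W=\langle e_1,\dots,e_{2n}\rangle$, and write $s=\sum_{i=1}^n x_{2i-1}\wedge x_{2i}$, so that $\omega_{alt}=x_0\wedge s$ and $s$ restricts to a non-degenerate alternating form on $W$ (with $\langle x_1,\dots,x_{2n}\rangle$ as its dual). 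Throughout, $g^*$ denotes the pullback action of $g\in GL(V_{2n+1})$ on forms; this differs from the contragredient action by an inversion, which is immaterial for the stabilizer.

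First I would record the equivariance $\iota_{g(v)}(g^*\phi)=g^*\iota_v(\phi)$ of the interior product, which shows that any $g$ with $g^*\omega_{alt}=\omega_{alt}$ preserves the rank of $\iota_v(\omega_{alt})$ for every $v\in V_{2n+1}$. Next I would compute these ranks. Writing $v=\lambda e_0+w$ with $w\in W$, the derivation property of $\iota$ gives $\iota_v(\omega_{alt})=\lambda s+x_0\wedge\iota_w(s)$, and the two summands lie in the complementary subspaces $\bigwedge^2\langle x_1,\dots,x_{2n}\rangle$ and $x_0\wedge\langle x_1,\dots,x_{2n}\rangle$ of $\bigwedge^2 V_{2n+1}^*$. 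Hence if $\lambda\neq0$ the form restricts on $W$ to a nonzero multiple of $s$, so it has rank $\geq 2n>2$ since $n>1$; and if $\lambda=0$, $w\neq0$, it equals $x_0\wedge\iota_w(s)$ with $\iota_w(s)\neq0$ by non-degeneracy of $s$ on $W$, hence has rank exactly $2$. Thus $\{v\neq0:\rank\,\iota_v(\omega_{alt})=2\}=W\setminus\{0\}$ (this also re-proves $\Rad(\omega_{alt})=0$), and consequently every $g\in\Stab(\omega_{alt})$ maps $W$ onto itself.

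Since such a $g$ is invertible and preserves $W$, it acts as the identity on $V_{2n+1}/W\cong\mathbb{F}_2$, so $g(e_0)=e_0+w_0$ for a unique $w_0\in W$ and $g=\left(\begin{smallmatrix}1 & 0\\ w_0 & B\end{smallmatrix}\right)$ with $B:=g|_W\in GL_{2n}(\mathbb{F}_2)$. Then $g^*x_0=x_0$, and expanding $g^*s$ in the basis $\{x_0,\dots,x_{2n}\}$ one finds $g^*s=\bar s+x_0\wedge\eta$ for some $\eta\in\langle x_1,\dots,x_{2n}\rangle$, where $\bar s$ is the pullback of $s$ under $B$ acting on $W$; therefore $g^*\omega_{alt}=x_0\wedge g^*s=x_0\wedge\bar s$. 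Because $\phi\mapsto x_0\wedge\phi$ is injective on $\bigwedge^2\langle x_1,\dots,x_{2n}\rangle$, the condition $g^*\omega_{alt}=\omega_{alt}$ is equivalent to $\bar s=s$, i.e.\ to $B\in Sp_{2n}(\mathbb{F}_2)$, and conversely every pair $(w_0,B)$ with $w_0\in W$ and $B\in Sp_{2n}(\mathbb{F}_2)$ stabilizes $\omega_{alt}$. Finally the matrices $\left(\begin{smallmatrix}1 & 0\\ w_0 & I\end{smallmatrix}\right)$ form a normal subgroup isomorphic to $W\cong\mathbb{F}_2^{2n}$ on which $Sp_{2n}(\mathbb{F}_2)$ acts by its defining representation, so $\Stab(\omega_{alt})\cong\mathbb{F}_2^{2n}\rtimes Sp_{2n}(\mathbb{F}_2)=AffSp_{2n}(\mathbb{F}_2)$.

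The whole argument is mechanical once the rank computation in the second paragraph singles out $W$, and that computation is the only delicate point: it is exactly where the hypothesis $n>1$ is used, since it forces $2n>2$. For $n=1$ one has $\omega_{alt}=x_0\wedge x_1\wedge x_2$ and $\iota_v(\omega_{alt})$ has rank at most $2$ for every $v$, so the rank-$2$ criterion no longer detects $W$; in that case $\Stab(\omega_{alt})=SL_3(\mathbb{F}_2)$ and both the argument and the conclusion must be modified.
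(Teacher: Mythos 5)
Your proof is correct and follows exactly the route the paper intends: the paper's own proof of this proposition is simply ``similar to the proof of Proposition~\ref{Stab D}'', and your argument is precisely that rank-of-$\iota_v(\omega_{alt})$ computation identifying the hyperplane $\langle e_1,\dots,e_{2n}\rangle$, followed by the block-matrix description, with the expected simplification that $\mathbb{F}_2^\times$ is trivial. The details you supply (the equivariance of the interior product, the rank dichotomy using $2n>2$, and the injectivity of $\phi\mapsto x_0\wedge\phi$) are exactly the ones left implicit in the paper, and your remark on where $n>1$ is needed matches the paper's separate treatment of the $n=1$ case.
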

\begin{proof}
This is similar to the proof of Proposition~\ref{Stab D}.
\end{proof}

\begin{proposition}
\label{inclusion}
Let $n>1$. The projection from $AffSp_{2n}(\mathbb{F}_2)$ to $Sp_{2n}(\mathbb{F}_2)$ induces inclusions 
\[
\Stab(\omega_D) \hookrightarrow Sp_{2n}(\mathbb{F}_2)  \quad  \text{ and }  \quad  \Stab(\omega_Q) \hookrightarrow Sp_{2n}(\mathbb{F}_2)
\]
\end{proposition}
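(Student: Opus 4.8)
The plan is to recall how $\Stab(\omega_{alt})$ sits inside $AffSp_{2n}(\mathbb{F}_2) = V_{2n} \rtimes Sp_{2n}(\mathbb{F}_2)$ (Proposition~\ref{Stab alt p=2}) and then rule out, for each of $\omega_D$ and $\omega_Q$, any element of $\Stab(\omega_D)$ (resp.\ $\Stab(\omega_Q)$) whose image under the projection $AffSp_{2n}(\mathbb{F}_2) \to Sp_{2n}(\mathbb{F}_2)$ is trivial but which is itself nontrivial. Such an element is a "pure translation" $g$ sending $e_0 \mapsto e_0 + \sum_{i=1}^{2n} v_i e_i$ and fixing each $e_i$ for $i \geq 1$ (here I use the basis $\{e_0,\dots,e_{2n}\}$ of $V_{2n+1}$ dual to $\{x_0,\dots,x_{2n}\}$, exactly as in the proof of Proposition~\ref{Stab D}). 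So the content is: the only such translation that fixes $\omega_D$ (resp.\ $\omega_Q$) is the identity, i.e.\ $v = 0$.

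The key computation is to apply the dual action of such a $g$ to $\omega_D = \sum_{i=1}^n x_0 x_{2i-1} x_{2i}$, viewed inside $\Sym^3(x_0,\dots,x_{2n})$ modulo the relations $x_i^2 x_j + x_i x_j^2$. The dual map on $V_{2n+1}^*$ fixes each $x_i$ for $i \geq 1$ and sends $x_0 \mapsto x_0 + \ell$, where $\ell = \sum_{i=1}^{2n} c_i x_i$ is the linear form "dual" to the translation vector $v$ (with $c_i$ determined by the pairing $\langle e_0, \cdot\rangle$ picking out the $v_i$). Substituting, $g^*(\omega_D) = \sum_{i=1}^n (x_0 + \ell)\, x_{2i-1} x_{2i} = \omega_D + \ell \cdot s$, where $s = \sum_{i=1}^n x_{2i-1} x_{2i}$. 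So the condition $g \in \Stab(\omega_D)$ becomes $\ell \cdot s = 0$ in $H^3(V_{2n+1}, k^\times)$, i.e.\ $\ell \cdot s$ lies in the span of the relations $\{x_i^2 x_j + x_i x_j^2\}$. Since $\ell$ involves only $x_1,\dots,x_{2n}$ and $s$ is a sum of distinct products $x_{2i-1}x_{2i}$, the product $\ell \cdot s$, when expanded, produces genuine degree-3 square-free monomials $x_a x_b x_c$ (with $a,b,c$ distinct among $1,\dots,2n$) unless it already lies in the relation span. Because $n > 1$ there are at least two symplectic pairs, so a nonzero $\ell$ forces a nonzero square-free monomial in $\ell \cdot s$ — e.g.\ if $c_j \neq 0$ for some $j$ in pair $k$, then pairing against a different pair $k'$ yields $x_j x_{2k'-1} x_{2k'}$ with a nonzero coefficient, which survives in the quotient. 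Hence $\ell = 0$, i.e.\ $v = 0$, and the kernel of the projection restricted to $\Stab(\omega_D)$ is trivial, giving the claimed inclusion. For $\omega_Q = \omega_D + x_0 x_1^2 + x_0 x_2^2$ the same substitution adds $\ell(x_1^2 + x_2^2)$; but modulo the relations $x_0 x_i^2 + x_0 x_i^2$ — more precisely using that the cubic part of the module is controlled as in Proposition~\ref{H3 when p even} — the extra term $\ell(x_1^2 + x_2^2)$ either vanishes or only contributes to the $\Sym^2$ (symmetric) summand, and in any case the square-free (exterior) part of $g^*(\omega_Q) - \omega_Q$ is again $\ell \cdot s$, so the same argument forces $\ell = 0$.

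I expect the main obstacle to be the bookkeeping in the quotient $\Sym^3/\langle x_i^2 x_j + x_i x_j^2\rangle$ for $\omega_Q$: one must be careful that the terms $\ell \cdot x_1^2$, $\ell \cdot x_2^2$ (which contain monomials like $x_j x_1^2$) do not conspire, via the relations, to cancel the square-free monomials coming from $\ell \cdot s$. The clean way to handle this is to observe that the projection $\pi$ of Proposition~\ref{H3 when p even} kills all of $\Sym^2(V_{2n+1}^*)$, and that $\pi(\ell \cdot x_i^2) = 0$ while $\pi(\ell \cdot s)$ is precisely the exterior-algebra element $\ell \wedge s = \sum_{i=1}^n \ell \wedge x_{2i-1} \wedge x_{2i}$; then the statement reduces to: $\ell \wedge s = 0$ in $\bigwedge^3(V_{2n+1}^*)$ with $\ell \in \langle x_1,\dots,x_{2n}\rangle$ and $s$ the standard symplectic form forces $\ell = 0$ when $n > 1$ — which is immediate since $s$ is non-degenerate of rank $2n \geq 4$, so $\iota_v(\text{anything})$ cannot be used to kill a rank-$2n$ form by wedging with a single covector unless that covector is zero. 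This reduces everything to the odd-$p$-style radical computation already implicit in Proposition~\ref{Stab D}, and avoids the delicate $\Sym^3$ arithmetic entirely.
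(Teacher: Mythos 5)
The paper's own proof is a one-line ``simple computation,'' so your attempt to supply the details is the right instinct, and your identification of the kernel of the projection as the translations $g_v\colon e_0\mapsto e_0+\sum_{i\ge 1}v_ie_i$, $e_j\mapsto e_j$ ($j\ge 1$) is correct. But the dual action you use is backwards, and this is not a harmless convention issue. For such a $g_v$ one has $(x_0\circ g_v)(e_k)=x_0(g_v(e_k))$, which gives $g_v^*x_0=x_0$ and $g_v^*x_j=x_j+v_jx_0$ for $j\ge 1$ --- the opposite of your claim that $x_0\mapsto x_0+\ell$ while the $x_j$ are fixed. The maps you actually analyze (dual to $e_j\mapsto e_j+\ell_je_0$, $e_0\mapsto e_0$) are not the translations of $AffSp_{2n}(\mathbb{F}_2)$ and are not even in $\Stab(\omega_{alt})$ unless trivial, so your computation tests the wrong subgroup.

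The error matters because it invalidates your proposed ``clean'' reduction. With the correct substitution,
\[
g_v^*\omega_D-\omega_D=\sum_{i=1}^n\bigl(v_{2i}\,x_0^2x_{2i-1}+v_{2i-1}\,x_0^2x_{2i}\bigr)+\Bigl(\sum_{i=1}^n v_{2i-1}v_{2i}\Bigr)x_0^3,
\]
which lies entirely in the $\Sym^2(V_{2n+1}^*)$ summand of $H^3(V_{2n+1},k^\times)$, i.e.\ in the kernel of $\pi$. So applying $\pi$ and arguing that $\ell\wedge s\ne 0$ in $\bigwedge^3(V_{2n+1}^*)$ detects nothing here; the whole point is that the obstruction lives in the symmetric part, and the ``delicate $\Sym^3$ arithmetic'' cannot be avoided. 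Fortunately the correct computation is still immediate: the classes $x_0^2x_j$ ($j=1,\dots,2n$) and $x_0^3$ are linearly independent in the $\Sym^2$ summand (they correspond to $z_0z_j$ and $z_0^2$), so the displayed element vanishes only if every $v_j=0$; the extra terms of $\omega_Q$ contribute only a further multiple of $x_0^3$, namely $(v_1+v_2)x_0^3$, so the same conclusion holds for $\Stab(\omega_Q)$. Rewriting your argument with the corrected dual action and this $\Sym^2$-computation gives a complete proof (and note it nowhere uses $n>1$; that hypothesis is only needed to have $\Stab(\omega_{alt})\cong AffSp_{2n}(\mathbb{F}_2)$ in the first place).
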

\begin{proof}
A simple computation verifies that the kernel of this projection intersects both $\Stab(\omega_D)$ and $\Stab(\omega_Q)$ trivially.
\end{proof}

\begin{proposition}
Let $n>1$. Then
\[
\Stab(\omega_D) \cong Sp_{2n}(\mathbb{F}_2)  \quad  \text{ and }  \quad  \Stab(\omega_Q) \cong Sp_{2n}(\mathbb{F}_2)
\]
\end{proposition}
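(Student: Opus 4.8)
The plan is to show that for $n > 1$ the stabilizers $\Stab(\omega_D)$ and $\Stab(\omega_Q)$ inside $AffSp_{2n}(\mathbb{F}_2)$ actually project isomorphically onto the full symplectic group $Sp_{2n}(\mathbb{F}_2)$. By Proposition~\ref{inclusion} we already know that the projection $AffSp_{2n}(\mathbb{F}_2)\to Sp_{2n}(\mathbb{F}_2)$ restricts to an injection on each of $\Stab(\omega_D)$ and $\Stab(\omega_Q)$; so what remains is \emph{surjectivity}. First I would set up coordinates: let $\{e_0, e_1, \dots, e_{2n}\}$ be the standard basis of $V_{2n+1}$ with dual basis $\{x_0, x_1, \dots, x_{2n}\}$, and recall from the proof of Proposition~\ref{Stab alt p=2} (modelled on Proposition~\ref{Stab D}) that a general element $g \in \Stab(\omega_{alt})$ fixes the span $W = \langle e_1, \dots, e_{2n}\rangle$ (characterized by $\iota_v(\omega_{alt})$ having rank $2$), acts on it as some $M \in Sp_{2n}(\mathbb{F}_2)$ preserving $s = \sum_{i=1}^n x_{2i-1}\wedge x_{2i}$, and sends $e_0 \mapsto e_0 + w$ for some $w \in W$ (the scalar $\lambda$ being forced to $1$ since $\mathbb{F}_2^\times$ is trivial). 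Thus $\Stab(\omega_{alt}) \cong AffSp_{2n}(\mathbb{F}_2) = W \rtimes Sp_{2n}(\mathbb{F}_2)$ with $W \cong \mathbb{F}_2^{2n}$.

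Next I would write down, for $g = (w, M) \in W \rtimes Sp_{2n}(\mathbb{F}_2)$, the condition $g(\omega_D) = \omega_D$ explicitly. Since $M$ already preserves the ``symplectic'' cubic $\sum_{i=1}^n x_0 \wedge x_{2i-1} \wedge x_{2i} = x_0 \wedge s$ up to the translation part, the effect of the translation $e_0 \mapsto e_0 + w$ on $\omega_D = x_0 \wedge s$ is to add a correction term involving $s$ evaluated against $w$; I expect $g(\omega_D) = \omega_D + (\text{something quadratic in } w)$ living in $\bigwedge^3 W^*$, so that the equation $g(\omega_D) = \omega_D$ becomes a condition relating $w$ to $M$. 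The key point is that for \emph{every} $M \in Sp_{2n}(\mathbb{F}_2)$ this condition can be solved for $w$: the obstruction is a coboundary-type expression and the relevant cohomology vanishes (equivalently, the defect lands in a space on which the map $w \mapsto (\text{defect})$ is surjective, or the defect vanishes identically because $M \in Sp$). Granting this, the composite $\Stab(\omega_D) \hookrightarrow W \rtimes Sp_{2n}(\mathbb{F}_2) \twoheadrightarrow Sp_{2n}(\mathbb{F}_2)$ is onto, and combined with injectivity from Proposition~\ref{inclusion} gives $\Stab(\omega_D) \cong Sp_{2n}(\mathbb{F}_2)$. The same computation with the extra summand $x_0 x_1^2 + x_0 x_2^2$ (whose image in $\bigwedge^3$ is zero, so it only constrains the lower-order data) handles $\Stab(\omega_Q)$; here one must also check that the $\Sym^2$-component $x_0 x_1^2 + x_0 x_2^2$ is preserved, which should follow once $w$ is pinned down by the $\bigwedge^3$-equation, possibly forcing an additional compatibility that is automatically satisfied for symplectic $M$.

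The main obstacle I anticipate is the second step: controlling precisely the ``defect'' cocycle $g(\omega_D) - \omega_D$ as a function of $(w, M)$ and verifying that the translation part $w$ can always be chosen to cancel it — in other words, checking that the relevant constraint is consistent for all $M \in Sp_{2n}(\mathbb{F}_2)$ rather than only for a subgroup. This is essentially a bookkeeping computation in $\bigwedge^\bullet(V_{2n+1}^*)$ over $\mathbb{F}_2$, but the characteristic-$2$ subtleties (symmetric versus alternating forms coincide, Frobenius twists in $H^3(V,\mathbb{F}_2) = \Sym^3$) mean one has to be careful that no hidden obstruction appears; I would double-check the $n=1$ boundary case separately (it is excluded here, cf.\ the statement requires $n>1$) to make sure the general argument is not accidentally using $n > 1$ in a way that hides a genuine constraint. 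Once the defect is shown to be killed by an appropriate translation, the isomorphism follows formally.
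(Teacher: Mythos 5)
Your strategy (show that for every $M\in Sp_{2n}(\mathbb{F}_2)$ the translation part $w$ can be chosen so that $(w,M)$ fixes $\omega_D$, then invoke the injectivity from Proposition~\ref{inclusion}) is genuinely different from the paper's argument, and it can in principle be pushed through — but as written it has a gap at exactly the hard point, and the two fallback justifications you offer for closing it are both incorrect. Writing $g^*x_0=x_0$ and $g^*x_j=w_jx_0+y_j$ with $y_j=x_j\circ M$, one finds
\[
g^*\omega_D=\omega_D+\kappa_D(w)\,x_0^3+x_0^2\bigl(\sigma_M(w)+\ell_M\bigr),
\]
where $\sigma_M(w)=\sum_i(w_{2i-1}y_{2i}+w_{2i}y_{2i-1})$ and $\ell_M$ is the linear form with $\kappa_D\circ M=\kappa_D+\ell_M^2$ (it is nonzero precisely when $M\notin O^+_{2n}(\mathbb{F}_2)$, so the defect does \emph{not} ``vanish identically because $M\in Sp$''). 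The monomials $x_0^3$ and $x_0^2x_j$ are linearly independent in $H^3(V_{2n+1},k^\times)$, so the condition splits into a linear equation $\sigma_M(w)=\ell_M$, which has a \emph{unique} solution $w_M$ since $\sigma_M$ is an isomorphism $W\to W^*$, and the additional scalar constraint $\kappa_D(w_M)=0$. Your other suggested escape — that ``$w\mapsto(\text{defect})$ is surjective'' — fails on cardinality grounds: the defect lives in a space of dimension $2n+1$ while $w$ ranges over $\mathbb{F}_2^{2n}$. So $\kappa_D(w_M)=0$ is a genuine condition on $M$ that must be checked; it does hold for all $M\in Sp_{2n}(\mathbb{F}_2)$, but seeing this requires an actual argument (e.g.\ that the Arf invariant of $\kappa_D\circ M=\kappa_D+\ell_M^2$ equals both $\mathrm{Arf}(\kappa_D)$ and $\mathrm{Arf}(\kappa_D)+\kappa_D(\hat\ell_M)$, forcing $\kappa_D(\hat\ell_M)=0$, together with the relation between $\hat\ell_M$ and $w_M$), and the same issue recurs for $\omega_Q$ with $\kappa_D$ replaced by $\kappa_Q$. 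Until this is supplied, the surjectivity step is unproved.

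For comparison, the paper avoids this computation entirely: it observes that $O^+_{2n}(\mathbb{F}_2)$ and $O^-_{2n}(\mathbb{F}_2)$, the stabilizers of the quadratic forms $\kappa_D$ and $\kappa_Q$, sit inside $\Stab(\omega_D)$ and $\Stab(\omega_Q)$ respectively, that these orthogonal groups are \emph{maximal} subgroups of $Sp_{2n}(\mathbb{F}_2)$, and then exhibits one explicit element of both stabilizers lying in neither orthogonal group; maximality then forces the images to be all of $Sp_{2n}(\mathbb{F}_2)$. If you want to keep your direct approach, the missing Arf-invariant lemma is the piece to add; otherwise the maximal-subgroup route is the shorter path.
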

\begin{proof}
Second cohomology classes $\kappa_D$ and $\kappa_Q$ defined in equations \eqref{kappaD p=2} and \eqref{kappaQ p=2}, represent the two equivalence classes of quadratic forms in even dimension \cite[3.4.7]{W}.
Their respective stabilizers, the orthogonal groups $O^+_{2n}(\mathbb{F}_2)$ and $O^-_{2n}(\mathbb{F}_2)$, 
are identified with subgroups  of $\Stab(\omega_D)$ and $\Stab(\omega_Q)$.

It is known that  $O^+_{2n}(\mathbb{F}_2)$ and $O^-_{2n}(\mathbb{F}_2)$ are maximal subgroups of $Sp_{2n}(\mathbb{F}_2)$ \cite[Theorem 1.5]{P}. 

Let $M \in  GL(V_{2n+1})$ be defined by
\[
M(v_0,\, v_1,\, v_2,\, v_3,\, v_4, \dots, v_{2n-1},\, v_{2n}) = (v_0,\, v_1,\, v_2,\, v_0+v_3+v_4,\, v_4, \dots, v_{2n-1},\, v_{2n}).
\]
Note that $M \in \Stab(\omega_D)$ and $M \in \Stab(\omega_Q)$, however, it is not an element of either orthogonal subgroup. 
Thus the images of inclusions specified in Proposition~\ref{inclusion} properly contain maximal subgroups.
It follows that these are isomorphisms.
\end{proof}

\begin{proposition}
\label{Stab D,Q  n=1 p=2}
Let $n=1$. Then 
\[
\Stab(\omega_D)\cong S_4\quad  \text{and} \quad   \Stab(\omega_Q)= S_3.
\]
\end{proposition}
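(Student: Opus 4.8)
The plan is to deduce both isomorphisms from the Brauer--Picard groups of $\C(D,1)$ and $\C(Q,1)$ already recorded in \eqref{BrPic d8 and q8}, combined with the machinery of Sections~\ref{section 3}--\ref{section 5}. By Proposition~\ref{omegas D,Q p=2} and \eqref{omega alt p=2}, for $n=1$ we have $\omega_D = x_0 x_1 x_2$, $\omega_Q = x_0 x_1 x_2 + x_0 x_1^2 + x_0 x_2^2$ in $H^3(V_3,k^\times)$, and $\omega_{alt} = \pi(\omega_D) = \pi(\omega_Q) = x_0\wedge x_1\wedge x_2 \in \bigwedge^3(V_3^*)$. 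The first observation is that $\omega_{alt}$ is non-degenerate: since $\dim V_3 = 3$, the map $v\mapsto \iota_v(x_0\wedge x_1\wedge x_2)$ is a linear bijection $V_3\to\bigwedge^2(V_3^*)$, so $\Rad(\omega_{alt})=0$ and $\iota(\omega_{alt})$ is an isomorphism.

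Hence Corollary~\ref{exact seq for BrPic in nondeg case p=2} applies and the exact sequence of Theorem~\ref{exact seq for BrPic in nondeg case} becomes
\[
0\to V_3\xrightarrow{\iota(\omega_{alt})}{\bigwedge}^2(V_3^*)\to\BrPic(\C(V_3,\omega))\to\Stab(\omega)\to 0
\]
for $\omega = \omega_D$ and $\omega = \omega_Q$. Because $\iota(\omega_{alt})$ is an isomorphism, the map $\bigwedge^2(V_3^*)\to\BrPic(\C(V_3,\omega))$ is zero, and therefore $\BrPic(\C(V_3,\omega))\cong\Stab(\omega)$, just as in the proof of Corollary~\ref{BrPic D, Q n=1}. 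This is precisely the place where the $n>1$ argument of Propositions~\ref{Stab alt p=2} and~\ref{inclusion} must be replaced: for $n=1$ one has $\Stab(\omega_{alt})=GL_3(\mathbb{F}_2)$, so the alternating part alone carries no information and the whole class $\omega$ must be used.

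Finally, the braided equivalence \eqref{omega extra} with $n=1$ gives braided equivalences $\Z(\C(D,1))\cong\Z(\C(V_3,\omega_D))$ and $\Z(\C(Q,1))\cong\Z(\C(V_3,\omega_Q))$; since the Brauer--Picard group depends only on the center by \eqref{Phi}, we obtain $\BrPic(\C(V_3,\omega_D))\cong\BrPic(\C(D,1))$ and $\BrPic(\C(V_3,\omega_Q))\cong\BrPic(\C(Q,1))$. For $n=1$ the groups $D$ and $Q$ are the dihedral and quaternion groups of order $8$, so by \eqref{BrPic d8 and q8} these are $S_4$ and $S_3$ respectively, and combining this with the second paragraph gives $\Stab(\omega_D)\cong S_4$ and $\Stab(\omega_Q)\cong S_3$. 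The argument contains no genuinely hard step once Sections~\ref{section 3}--\ref{section 5} are in place; the only things to verify are the non-degeneracy statement and the dimension count forcing $\iota(\omega_{alt})$ to be bijective. An alternative, independent of \eqref{BrPic d8 and q8}, is to compute the stabilizers of $\omega_D,\omega_Q$ in $GL_3(\mathbb{F}_2)$ acting on the $7$-dimensional module $H^3(V_3,k^\times)$ directly, identifying $\Stab(\omega_D)$ with a point (or line) stabilizer $\mathbb{F}_2^2\rtimes GL_2(\mathbb{F}_2)\cong S_4$ in the Fano plane $\mathbb{P}^2(\mathbb{F}_2)$ and $\Stab(\omega_Q)$ with a copy of $O^-_2(\mathbb{F}_2)\cong S_3$; in that route the only real work is a finite orbit--stabilizer computation.
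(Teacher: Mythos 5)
Your argument is correct, but it runs in the opposite direction from the paper's. The paper proves this proposition directly: for $p=2$ a class in $H^3(V_3,k^\times)$ is determined by its evaluation function $V_3\to\mathbb{F}_2$ (the set of vectors on which it evaluates to $1$), so $\Stab(\omega_D)$ is the stabilizer in $GL_3(\mathbb{F}_2)$ of the single vector $(1,1,1)$, giving $\mathbb{F}_2^2\rtimes GL_2(\mathbb{F}_2)\cong S_4$, while $\Stab(\omega_Q)$ is the stabilizer of the basis $\{(1,1,1),(1,1,0),(1,0,1)\}$ as a set, giving $S_3$; the Brauer--Picard groups are then \emph{deduced} from these stabilizers in Corollary~\ref{BrPic D, Q n=1 p=2}. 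You instead take the values $\BrPic(\C(D,1))\cong S_4$ and $\BrPic(\C(Q,1))\cong S_3$ as known input from \cite{NR} via \eqref{BrPic d8 and q8}, and use the collapse of the exact sequence of Theorem~\ref{exact seq for BrPic in nondeg case} (valid here since $\iota(\omega_{alt})\colon V_3\to\bigwedge^2(V_3^*)$ is a bijection between $3$-dimensional spaces) to read off the stabilizers. Every step of this checks out, and your observation that $\Stab(\omega_{alt})=GL_3(\mathbb{F}_2)$ for $n=1$, so that the full class $\omega$ must be used, is exactly the right diagnosis of why the $n>1$ argument does not apply. The trade-offs: your route is not self-contained (it leans on an external computation done by entirely different methods), it would make the paper's Corollary~\ref{BrPic D, Q n=1 p=2} a tautological restatement of its own input rather than an independent re-derivation, and it identifies $\Stab(\omega_D)$ and $\Stab(\omega_Q)$ only up to abstract isomorphism rather than as explicit subgroups of $GL_3(\mathbb{F}_2)$. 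Your sketched alternative at the end (a direct orbit--stabilizer computation in $H^3(V_3,k^\times)$, identifying a vector stabilizer $\mathbb{F}_2^2\rtimes GL_2(\mathbb{F}_2)\cong S_4$ and a basis-permuting $S_3$) is essentially the paper's proof; carried out, it would be the preferable version.
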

\begin{proof}
For any $\omega\in H^3(V_3,\, k^\times)= \Sym^3(x_0,\, x_1,\, x_2)/\mathbb{F}_2 \langle x_i^2x_j+x_ix_j^2 \mid \, i,j=0,1,2\rangle$
the evaluation map
\[
V_3 \to \mathbb{F}_2 : (v_0,\, v_1,\, v_2) \mapsto \omega(v_0,\, v_1,\,v_2)
\]
is well defined and, furthermore,  $\omega$ is completely determined by the set of vectors of $V_3$ mapped to $1$ 
(cf.\  \cite{M}). 

For $\omega= \omega_D=x_0x_1x_2$ this set consists of the single vector $(1,\, 1,\,1)$. So $\Stab(\omega_D)$
is precisely the subgroup of automorphisms of $V_3$ fixing this vector, i.e.,  
$\Stab(\omega_D)\cong AffGL_2(\mathbb{F}_2)\cong \mathbb{F}_2^2 \rtimes GL_2(\mathbb{F}_2)\cong  S_4$.

For $\omega=\omega_Q= x_0x_1x_2+ x_0x_1^2+x_0x_2^2$ this set consists of vectors $(1,\, 1,\,1),\, (1,\, 1,\,0)$,
and $(1,\, 0,\,1)$.
The group $\Stab(\omega_Q)$ consists of automorphisms of $V_3$ permuting these vectors. 
Since they form a basis of $V_3$,  we conclude $\Stab(\omega_Q) \cong S_3$.
\end{proof}

\begin{corollary}
\label{BrPic D, Q n=1 p=2}
Let $D,\,Q$ be the dihedral group and quaternion groups of order $8$, respectively.
Then 
\begin{equation}
\label{Brianna's BrPic p=2}
\BrPic(\C(D,\,1)) \cong S_4 \quad \text{ and } \quad \BrPic(\C(Q,\,1)) \cong S_3.
\end{equation}
\end{corollary}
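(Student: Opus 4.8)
The plan is to deduce Corollary~\ref{BrPic D, Q n=1 p=2} directly from the structural results already assembled, specializing everything to $n=1$. First I would invoke Corollary~\ref{exact seq for BrPic in nondeg case p=2}, which asserts that Corollary~\ref{walt nondeg} and Theorem~\ref{exact seq for BrPic in nondeg case} carry over to $p=2$ with $\omega_{alt}=\pi(\omega)$. Applied to $V_3$ and the cocycles $\omega_D,\,\omega_Q$ of Proposition~\ref{omegas D,Q p=2}, with $\omega_{alt}=x_0\wedge x_1\wedge x_2$ from \eqref{omega alt p=2}, this gives, for each of $G=D,\,Q$, a short exact sequence
\[
0\to V_3 \xrightarrow{\iota(\omega_{alt})} {\bigwedge}^2(V_3^*) \to \BrPic(\C(G,\,1)) \to \Stab_{V_3}(\omega_G) \to 0.
\]

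The key point is that the first map $\iota(\omega_{alt}):V_3\to \bigwedge^2(V_3^*)$ is an isomorphism: both sides are $3$-dimensional over $\mathbb{F}_2$, and a direct check (exactly as in the proof of Corollary~\ref{BrPic D, Q n=1} for odd $p$) shows $\iota_v(x_0\wedge x_1\wedge x_2)\neq 0$ for every nonzero $v$, so $\Rad(\omega_{alt})=0$ and the map is injective, hence bijective. Consequently the sequence collapses to an isomorphism $\BrPic(\C(G,\,1))\cong \Stab_{V_3}(\omega_G)=\Stab(\omega_G)$ for $G=D,\,Q$. At this stage the result is immediate from Proposition~\ref{Stab D,Q  n=1 p=2}, which computes $\Stab(\omega_D)\cong S_4$ and $\Stab(\omega_Q)\cong S_3$. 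I would spell this out in one or two sentences, essentially: ``The homomorphisms $\iota(\omega_{alt})$ for $\omega=\omega_D,\omega_Q$ in the $p=2$ analogue of \eqref{exact sequence for Vn} are isomorphisms, so $\BrPic(\C(V_3,\,\omega))=\Stab(\omega)$, and the claim follows from Proposition~\ref{Stab D,Q  n=1 p=2}.''

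There is essentially no obstacle here, since all the substantive work has been done upstream (the cohomology computation in Proposition~\ref{H3 when p=2}, the identification of $\omega_D,\omega_Q$ in Proposition~\ref{omegas D,Q p=2}, the exact sequence in Corollary~\ref{exact seq for BrPic in nondeg case p=2}, and the stabilizer computation in Proposition~\ref{Stab D,Q  n=1 p=2}); this corollary is just the bookkeeping that packages them together. The only thing to be slightly careful about is verifying non-degeneracy of $\omega_{alt}=x_0\wedge x_1\wedge x_2$ in dimension $3$ over $\mathbb{F}_2$ so that the hypothesis of the theorem is met and the connecting map is forced to be an isomorphism for dimension reasons; this is a one-line computation. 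I would also remark, as the text does for odd $p$, that the first isomorphism recovers the known result \eqref{BrPic d8 and q8} of \cite{NR}.
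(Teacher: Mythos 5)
Your proposal is correct and follows essentially the same route as the paper: both deduce the result from the $p=2$ version of the exact sequence \eqref{exact_sequence_for_Vn} by observing that $\iota(\omega_{alt})\colon V_3\to\bigwedge^2(V_3^*)$ is an isomorphism (since $\omega_{alt}=x_0\wedge x_1\wedge x_2$ is non-degenerate and both sides are $3$-dimensional), so that $\BrPic(\C(V_3,\omega))=\Stab(\omega)$, and then quote Proposition~\ref{Stab D,Q  n=1 p=2}. Your extra remarks (checking non-degeneracy explicitly, and the implicit use of the Morita equivalence $\Z(\C(G,1))\cong\Z(\C(V_3,\omega_G))$) only make explicit what the paper leaves tacit.
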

\begin{proof}
Note that the homomorphisms $\iota(\omega_{alt})$ for $\omega= \omega_D,\, \omega_Q$ 
in \eqref{exact sequence for Vn} are isomorphisms, so $\BrPic(\C(V_{3},\,\omega)) =\Stab(\omega)$
and the isomorphisms follow from Proposition~\ref{Stab D,Q  n=1 p=2}.
\end{proof}

\begin{remark}
The isomorphisms in \eqref{Brianna's BrPic p=2} were established in \cite{NR}
by different methods.
\end{remark}

\subsection{Pointed $p$-categories coming from metric modular Lie algebras}

In view of isomorphism \eqref{H3Ep=odd} one can produce interesting examples of $3$-cocycles on elementary abelian groups
as follows.

Let $F$ be a finite field of  characteristic $p>3$. Below we consider   finite dimensional Lie algebras over $F$.
We refer the reader to \cite{S} for the theory of modular Lie algebras. 

\begin{definition}
A {\em pre-metric Lie algebra} is a Lie algebra $\g$ equipped with 
an invariant symmetric bilinear form $(\, , \,)$, i.e., such that
\[
([a,\,b],\, c) = (a,\, [b,\,c]),
\]
for all $a,b,c\in \mathfrak{g}$.  A {\em metric Lie algebra} is a pre-metric Lie algebra such that $(\, , \,)$ is non-degenerate.
\end{definition}

For a Lie algebra $\g$ let $\Aut(\g)$ denote the group of Lie algebra automorphisms of $\g$.
For a pre-metric Lie algebra $\g$ let $\Aut_m(\g)\subset \Aut(\g)$ denote the group of Lie algebra automorphisms of $\g$
preserving $(\, , \,)$.

Consider the following bilinear symmetric and trilinear alternating forms on $\mathfrak{g}$:
\[
\tilde\omega_{sym}(a,\,b) = (a,\,b) 
\quad  \text{  and } \quad 
\tilde\omega_{alt}(a,\,b,\,c)=([a,\,b],\, c) , \qquad a,b,c\in \mathfrak{g},
\]
and identify them with $\omega_{sym}\in \Sym^2(\g^*)$ and
$\omega_{alt}\in \bigwedge^3(\g^*)$ by means of symmetrization and anti-symmetrization maps.

Let $V$ denote the underlying additive group of $\mathfrak{g}$. It is an elementary abelian $p$-group.

Set  
\begin{equation}
\label{omega for Lie}
\omega = \omega_{alt}+ \omega_{sym} \in H^3(V,\, k^\times).
\end{equation}

\begin{proposition}
\label{stab =autm}
Let $\g$ be a metric Lie algebra such that $\mathfrak{g} = [\mathfrak{g},\,\mathfrak{g}]$
and let $\omega\in H^3(V,\, k^\times)$ be the $3$-cocycle constructed above.  Then
$\Stab(\omega)\cong \Aut_m(\g)$.
\end{proposition}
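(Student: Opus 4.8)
The plan is to identify $\Stab(\omega)\subset GL(V)=\Aut(V)$ with the set of linear automorphisms of the underlying vector space $\g$ that simultaneously preserve $\omega_{alt}$ and $\omega_{sym}$, and then to show this is exactly $\Aut_m(\g)$. First I would recall that $\Stab(\omega)=\Stab(\omega_{alt})\cap\Stab(\omega_{sym})$ in $GL(V)$, since the decomposition \eqref{H3Ep=odd} is $GL(V)$-equivariant (this is noted right after Theorem~\ref{exact seq for BrPic in nondeg case}). A linear map $a\colon\g\to\g$ lies in $\Stab(\omega_{sym})$ precisely when $(a(x),a(y))=(x,y)$ for all $x,y$, i.e.\ when $a$ preserves the bilinear form. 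Similarly $a\in\Stab(\omega_{alt})$ means $([a(x),a(y)],a(z))=([x,y],z)$ for all $x,y,z\in\g$. So the content is: the conditions ``$a$ preserves $(\,,\,)$ and $a$ preserves $([\cdot,\cdot],\cdot)$'' are together equivalent to ``$a$ is a Lie algebra automorphism preserving $(\,,\,)$''.

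The easy direction is immediate: if $a\in\Aut_m(\g)$ then $a$ preserves $(\,,\,)$ by definition, and $([a(x),a(y)],a(z))=(a([x,y]),a(z))=([x,y],z)$, so $a\in\Stab(\omega)$. For the reverse inclusion, suppose $a$ preserves both forms. I want to conclude $a([x,y])=[a(x),a(y)]$. Since $(\,,\,)$ is non-degenerate (here is where the metric, not merely pre-metric, hypothesis enters), it suffices to show $(a([x,y]),w)=([a(x),a(y)],w)$ for all $w\in\g$. Because $a$ is invertible, write $w=a(z)$; then $(a([x,y]),a(z))=([x,y],z)$ using invariance of $(\,,\,)$ combined with the fact that $a$ preserves $(\,,\,)$ — more precisely, $(a(u),a(z))=(u,z)$ — while $([a(x),a(y)],a(z))=([x,y],z)$ is exactly the condition $a\in\Stab(\omega_{alt})$. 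Hence $(a([x,y])-[a(x),a(y)],a(z))=0$ for all $z$, and non-degeneracy forces $a([x,y])=[a(x),a(y)]$. Thus $a\in\Aut_m(\g)$.

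The role of the hypothesis $\g=[\g,\g]$ deserves a remark: one must also check that the anti-symmetrization and symmetrization maps used to pass between the trilinear/bilinear forms $\tilde\omega_{alt},\tilde\omega_{sym}$ and the elements $\omega_{alt}\in\bigwedge^3(\g^*)$, $\omega_{sym}\in\Sym^2(\g^*)$ are such that preserving $\omega_{alt}$ (resp.\ $\omega_{sym}$) in the exterior (resp.\ symmetric) algebra is equivalent to preserving the original forms. For $\omega_{sym}$ this is automatic. For $\omega_{alt}$, the point is that $\tilde\omega_{alt}(a,b,c)=([a,b],c)$ is already alternating — antisymmetry in $a,b$ is the antisymmetry of the bracket, and the cyclic symmetry follows from invariance of $(\,,\,)$ — so antisymmetrization is the identity up to a nonzero scalar (recall $p>3$, so $3!$ is invertible in $F$), and stabilizing $\omega_{alt}$ is genuinely equivalent to stabilizing the trilinear form $([\cdot,\cdot],\cdot)$.

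I expect the main obstacle to be bookkeeping rather than mathematics: one must be careful that the identification in \eqref{H3Ep=odd} sends the $GL(V)$-action correctly and that no stray scalars (coming from the normalizations of symmetrization/antisymmetrization, or from the characteristic) obstruct the equivalence ``preserves $\omega_{alt}$'' $\Leftrightarrow$ ``preserves $([\cdot,\cdot],\cdot)$'' — this is precisely why $p>3$ is assumed. Once that is set up, the double inclusion argument above, using non-degeneracy for the reverse direction, completes the proof.
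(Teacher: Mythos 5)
Your proof is correct and takes essentially the same route as the paper's: stabilizing the class $\omega$ is equivalent to stabilizing $\omega_{alt}$ and $\omega_{sym}$ separately, and non-degeneracy of $(\,,\,)$ upgrades preservation of the trilinear form $([\cdot,\cdot],\cdot)$ to the Lie-algebra-homomorphism property (a step the paper asserts in one line and you spell out). Your closing remark does not actually locate a use of the hypothesis $\g=[\g,\g]$ --- it is not needed for this equivalence and enters only via the non-degeneracy of $\omega_{alt}$ in the subsequent proposition --- but this does not affect the validity of your argument.
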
 
\begin{proof}
It is clear that each $\phi\in \Aut_m(\g)$ stabilizes $\tilde\omega_{sym}$ and $\tilde\omega_{alt}$.
Hence, it stabilizes $\omega$.  Conversely, if $\psi$
is a group  automorphism of $V$ stabilizing $\omega$ then it must stabilize both $\omega_{sym}$
and $\omega_{alt}$.  The former condition means that $\psi$ preserves  $(\, , \,)$ and the latter one means that it is a Lie algebra
homomorphism.
\end{proof}

\begin{proposition}
\label{w alt for metric}
Let $\g$ be a pre-metric Lie algebra. Then  $\omega_{alt}\in \bigwedge^3(\g^*)$ is non-degenerate 
if and only if $\mathfrak{g} = [\mathfrak{g},\,\mathfrak{g}]$
and $(\, , \,)$  is non-degenerate (i.e., $\g$ is a metric Lie algebra).
\end{proposition}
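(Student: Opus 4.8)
The plan is to reduce the statement to an explicit description of $\Rad(\omega_{alt})$ in terms of the center of $\g$ and the radical of the bilinear form. Write $\Rad(\,,\,)=\{c\in\g\mid (c,d)=0\ \text{for all}\ d\in\g\}$ for the radical of $(\,,\,)$. The opening step is the computation
\[
\iota_u(\omega_{alt})(b,c)=\omega_{alt}(u,b,c)=([u,b],c),\qquad u,b,c\in\g,
\]
which uses only the definition of the interior derivation $\iota$ together with the fact that the trilinear form $(a,b,c)\mapsto([a,b],c)$ is already alternating (by invariance of $(\,,\,)$), so that anti-symmetrization changes it at most by a nonzero scalar. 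Hence $\iota_u(\omega_{alt})=0$ precisely when $[u,b]\in\Rad(\,,\,)$ for every $b$, that is,
\[
\Rad(\omega_{alt})=\{u\in\g\mid [u,\g]\subseteq\Rad(\,,\,)\}.
\]
I would then record two elementary consequences of invariance of $(\,,\,)$: first, $\Rad(\,,\,)$ is an ideal of $\g$; second, the orthogonal complement $[\g,\g]^{\perp}$ coincides with $\{c\in\g\mid [c,\g]\subseteq\Rad(\,,\,)\}$.

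First suppose $\g=[\g,\g]$ and $(\,,\,)$ is non-degenerate, i.e.\ $\Rad(\,,\,)=0$. Then the displayed formula gives $\Rad(\omega_{alt})=\{u\in\g\mid [u,\g]=0\}$, the center $Z(\g)$; but any central $u$ is orthogonal to $[\g,\g]=\g$ by invariance, hence vanishes by non-degeneracy. So $\Rad(\omega_{alt})=0$, i.e.\ $\omega_{alt}$ is non-degenerate.

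Conversely, suppose $\Rad(\omega_{alt})=0$. Since $\Rad(\,,\,)$ is an ideal, it is contained in $\Rad(\omega_{alt})$ by the displayed formula, so $\Rad(\,,\,)=0$ and $(\,,\,)$ is non-degenerate. The formula then gives $\Rad(\omega_{alt})=Z(\g)=0$, and the identification of $[\g,\g]^{\perp}$ recorded above yields $[\g,\g]^{\perp}=Z(\g)=0$; since $(\,,\,)$ is a non-degenerate form on the finite-dimensional space $\g$, this forces $[\g,\g]=\g$.

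The argument is essentially formal, so I do not expect a genuine obstacle. The only points needing care are fixing the interior-product convention in the opening identity and keeping the two uses of invariance of $(\,,\,)$ separate — one to see that $\Rad(\,,\,)$ is an ideal, the other to compute $[\g,\g]^{\perp}$.
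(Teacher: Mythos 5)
Your proof is correct and follows the same route as the paper's (much terser) argument: reduce non-degeneracy of $\omega_{alt}$ to non-degeneracy of the alternating trilinear form $([a,\,b],\,c)$ and then identify its radical. Your identity $\Rad(\omega_{alt})=[\g,\g]^{\perp}=\{u\in\g\mid [u,\g]\subseteq\Rad(\,,\,)\}$ is exactly the content the paper leaves implicit in ``this implies the statement,'' and the two uses of invariance you isolate are the right ones.
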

\begin{proof}
Non-degeneracy of $\omega_{alt}$ is equivalent to non-degeneracy of the alternating trilinear form  
$\tilde\omega_{alt}$. This implies the statement.
 \end{proof}

\begin{corollary}
\label{when g metric}
Suppose that $\g$ is a metric Lie algebra such that $\mathfrak{g} = [\mathfrak{g},\,\mathfrak{g}]$. 
There is an exact sequence
\begin{equation}
\label{Autm exact}
0 \to V \to{\bigwedge}^2 (V^*) \to \BrPic(\C(V,\, \omega)) \to \Aut_m(\g) \to 0.
\end{equation}
\end{corollary}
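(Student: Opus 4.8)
The plan is to read the sequence off from Theorem~\ref{exact seq for BrPic in nondeg case}, using Propositions~\ref{w alt for metric} and \ref{stab =autm} to translate its hypothesis and its outer terms into Lie-theoretic language. First I would record the setup: the underlying additive group $V$ of $\g$ is elementary abelian, and since $p>3$ is odd, Proposition~\ref{H3 when p odd} gives $H^3(V,\,k^\times)\cong{\bigwedge}^3(V^*)\oplus\Sym^2(V^*)$, so the class $\omega$ of \eqref{omega for Lie} really is of the form $\omega_{alt}+\omega_{sym}$ with $\omega_{alt}\in{\bigwedge}^3(V^*)$ obtained by anti-symmetrizing $([a,\,b],\,c)$. (One bookkeeping point: when $F\neq\mathbb{F}_p$ these $F$-valued forms must be pushed to $\mathbb{F}_p$-valued ones via the trace $F\to\mathbb{F}_p$; since the trace form of a finite extension is non-degenerate this alters neither $\Rad(\omega_{alt})$ nor $\Stab(\omega)$, so it is harmless.)

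Next I would invoke Proposition~\ref{w alt for metric}: because $\g$ is metric with $\g=[\g,\,\g]$, the element $\omega_{alt}\in{\bigwedge}^3(V^*)$ is non-degenerate. This is exactly the hypothesis of Theorem~\ref{exact seq for BrPic in nondeg case}, which therefore produces the exact sequence
\[
0\to V \xrightarrow{\iota(\omega_{alt})} {\bigwedge}^2(V^*) \to \BrPic(\C(V,\, \omega)) \to \Stab(\omega) \to 0.
\]
Finally, Proposition~\ref{stab =autm}, whose hypotheses ($\g$ metric, $\g=[\g,\,\g]$) are in force, identifies $\Stab(\omega)$ with $\Aut_m(\g)$; substituting this for the last nonzero term gives \eqref{Autm exact}.

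Since every nontrivial ingredient is already recorded in the cited results, there is essentially no obstacle here beyond checking that the hypotheses line up. The one point worth noticing is that the condition $\g=[\g,\,\g]$ is doing double duty: it is what forces $\omega_{alt}$ to be non-degenerate (together with non-degeneracy of the bilinear form, via Proposition~\ref{w alt for metric}) and simultaneously what guarantees that every group automorphism of $V$ fixing $\omega_{alt}$ is an honest Lie algebra automorphism (Proposition~\ref{stab =autm}). Both facts have already been established, so the corollary is just their conjunction.
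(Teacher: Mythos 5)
Your proposal is correct and follows exactly the paper's own route: the corollary is deduced by combining Theorem~\ref{exact seq for BrPic in nondeg case} (applicable because Proposition~\ref{w alt for metric} gives non-degeneracy of $\omega_{alt}$) with the identification $\Stab(\omega)\cong\Aut_m(\g)$ from Proposition~\ref{stab =autm}. Your extra remark about transferring $F$-valued forms to $\mathbb{F}_p$-valued ones via the trace is a reasonable bookkeeping point the paper leaves implicit, but it does not change the argument.
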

\begin{proof}
This follows from Theorem~\ref{exact seq for BrPic in nondeg case}
and Propositions~\ref{stab =autm} and \ref{w alt for metric}.  
\end{proof}

One can take $(\, , \,)$ to be the Killing form of $\mathfrak{g}$ :
\[
(a,\,b) = \Tr_\mathfrak{g} (\ad(a)\,\ad(b)),\qquad a,b\in \mathfrak{g},
\]
where $\ad$ denotes the adjoint representation of $\mathfrak{g}$. Let $\omega^\g\in H^3(V,\, k^\times)$
denote the corresponding $3$rd cohomology class defined by \eqref{omega for Lie}.

\begin{remark}
If $\g$  has a non-degenerate Killing form then $\g$ is a direct sum of simple Lie algebras.
Unlike for Lie algebras  over $\mathbb{C}$,  the converse to this statement is false 
in positive characteristic. All simple classical Lie algebras with a non-degenerate Killing over 
a field $F$ with $\text{char}(F)>3$ are known. The necessary and sufficient condition is that  
$\text{char}(F)$  should not divide the determinant of the Killing form of the corresponding simple 
complex Lie algebra, see \cite[Chapter II \S 9]{S}.

It can even happen that every trace form of a simple $\g$ is degenerate, this is the case, e.g., for $\g =
\mathfrak{s}\mathfrak{l}_n(F)$ when $\text{char}(F)$ divides $n$.
\end{remark}

\begin{proposition}
Let $\g$ be a simple Lie algebra with a non-degenerate Killing form. We have an exact sequence
\[
0 \to V \to {\bigwedge}^2(V^*) \to \BrPic(\C(V,\, \omega^\g)) \to \Aut(\g) \to 0,
\]
\end{proposition}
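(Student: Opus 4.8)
The plan is to deduce this proposition directly from Corollary~\ref{when g metric}, applied to the invariant form $(\,,\,)$ taken to be the Killing form of $\g$, i.e.\ to the cocycle $\omega = \omega^\g$ defined in \eqref{omega for Lie}. Two things have to be checked: first, that the hypotheses of that corollary hold, namely that $\g$ is a metric Lie algebra satisfying $\g = [\g,\g]$; and second, that the group $\Aut_m(\g)$ appearing on the right-hand end of the sequence coincides with the full automorphism group $\Aut(\g)$.

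For the first point I would argue as follows. Since $\g$ is simple it is non-abelian and has no nonzero proper ideals; as $[\g,\g]$ is an ideal which is nonzero (otherwise $\g$ would be abelian), we conclude $\g = [\g,\g]$. The Killing form $(a,b) = \Tr_\g(\ad(a)\,\ad(b))$ is symmetric, and it is invariant — the identity $([a,b],c) = (a,[b,c])$ follows from $\ad([a,b]) = [\ad a,\ad b]$ together with cyclicity of the trace — and it is non-degenerate by hypothesis. Hence $\g$ is a metric Lie algebra, and Corollary~\ref{when g metric} (via Proposition~\ref{w alt for metric}, which ensures $\omega_{alt}$ is non-degenerate so that Theorem~\ref{exact seq for BrPic in nondeg case} applies) gives the exact sequence
\[
0 \to V \to {\bigwedge}^2(V^*) \to \BrPic(\C(V,\, \omega^\g)) \to \Aut_m(\g) \to 0.
\]

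It remains to identify $\Aut_m(\g)$ with $\Aut(\g)$, and this is the only step carrying any content — albeit a soft one. The Killing form is canonically built from the bracket, so it is automatically preserved by every Lie algebra automorphism: if $\phi \in \Aut(\g)$ then $\ad(\phi(a)) = \phi \circ \ad(a) \circ \phi^{-1}$ for all $a \in \g$, whence
\[
(\phi(a),\,\phi(b)) = \Tr_\g\big(\phi\,\ad(a)\,\ad(b)\,\phi^{-1}\big) = \Tr_\g\big(\ad(a)\,\ad(b)\big) = (a,\,b)
\]
by conjugation-invariance of the trace. Thus $\Aut(\g) \subseteq \Aut_m(\g)$, the reverse inclusion is trivial, and the two groups coincide. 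Substituting $\Aut_m(\g) = \Aut(\g)$ into the displayed sequence yields the proposition.

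The real difficulty here is upstream rather than in this proof: it resides in Corollary~\ref{when g metric} and the cohomological computations of Section~\ref{section 5}. The one genuine subtlety worth flagging is that the equality $\Aut_m(\g) = \Aut(\g)$ is special to the Killing form (or to any trace form $(a,b) = \Tr_\rho(\rho(a)\rho(b))$ attached to a characteristic-free representation $\rho$); for a metric Lie algebra equipped with an arbitrary invariant non-degenerate form, $\Aut_m(\g)$ can be a proper subgroup of $\Aut(\g)$, which is precisely why the proposition is stated for the Killing form.
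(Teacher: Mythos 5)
Your proof is correct and follows essentially the same route as the paper: the paper's proof simply asserts that $\Aut_m(\g)=\Aut(\g)$ for the Killing form and invokes Corollary~\ref{when g metric}, while you spell out the (routine) verifications that a simple $\g$ satisfies $\g=[\g,\g]$ and that every automorphism preserves the Killing form via conjugation-invariance of the trace. No gaps.
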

 \begin{proof}
 It is clear that when $(\, , \,)$ given by the Killing form of $\g$ we have $\Aut_m(\g)=\Aut(\g)$.
The result follows from Corollary~\ref{when g metric}.
 \end{proof}
 
 \begin{remark}
 The automorphism groups of classical simple modular Lie algebras $\g$ are known \cite{S}.
 
 Thus, finite simple groups of Lie type naturally appear as composition factors of  groups of autoequivalences 
 and Brauer-Picard groups of pointed fusion categories.
 \end{remark}
 
 \begin{example}
 Take $\g = \mathfrak{s}\mathfrak{l}_2(F)$. Since $p$ is odd, the Killing form is non-degenerate.
 Let
 \[
 e=\left[\begin{array}{cc} 0 & 1 \\ 0 & 0 \end{array}\right],
 \quad f=\left[\begin{array}{cc} 0 & 0 \\ 1 & 0 \end{array}\right],
 \quad \text{and}
 \quad h=\left[\begin{array}{cc} 1 & 0 \\ 0 & -1 \end{array}\right],
 \]
 be a basis  of $\mathfrak{g}$ so that
 \[
 ad(e)=\left[\begin{array}{ccc} 
 0 & 0 & -2 \\ 0 & 0 & 0 \\ 0 & 1 & 0 
 \end{array}\right],
 \; ad(f)=\left[\begin{array}{ccc} 
 0 & 0 & 0 \\ 0 & 0 & 2 \\ -1 & 0 & 0
 \end{array}\right],
 \; \text{and}
 \quad ad(h)=\left[\begin{array}{ccc} 
 2 & 0 & 0 \\ 0 & -2 & 0 \\ 0 & 0 & 0
 \end{array}\right].
 \]
 The corresponding $3$-cocycle $\omega^{\g}$ is given by
 \[
 \omega^\g =  \left( 8 x_e  \wedge  x_f \wedge x_h, \,  4z_e  z_f \, + \, 8 z_h^2 \right). 
 \]
 In particular, when $F=\mathbb{F}_p$ we have $\BrPic(\C(V,\, \omega^\g)) =SO_3(\mathbb{F}_p)$.
 
 \end{example}

\subsection{Representation category of the Kac-Paljutkin Hopf algebra}

Recall that the {\em Kac-Paljutkin Hopf algebra} $KP$ is the unique non-commutative and non-cocommutative 
semisimple Hopf algebra of dimension $8$ \cite{KP}.

Let $\KP$ be the fusion category of representations of $KP$. Below we compute the Brauer-Picard 
group of $\KP$.

Note that $\KP$ is a particular case of a {\em  Tambara-Yamagami} fusion category \cite{TY}. Recall that
such categories $\TY(A,\,\chi,\,\tau)$ are defined as follows. Let $A$ be a finite  abelian group.
The set of simple objects of $\TY(A,\,\chi,\,\tau)$
is $\{ a\mid a\in A\} \cup \{m\}$ with tensor products given by 
\[
a\ot b =ab,\qquad a\ot m =m \ot a = m,\qquad m\ot m =\bigoplus_{a\in A},
\]
and associativity constraints determined by a non-degenerate symmetric bilinear form $\chi:A\ot A\to k^\times$
and $\tau\in k$ such that $\tau^2= |A|^{-1}$ (see \cite{TY} for details of the definition).
It was shown in \cite[Section 4]{TY}  that 
\begin{equation}
\label{KP=TY}
\KP\cong \TY(\mathbb{Z}_2 \times \mathbb{Z}_2,\, \chi,\, \frac{1}{2}),
\end{equation}  
where $\chi$  is the bilinear form on  $\mathbb{Z}_2 \times \mathbb{Z}_2 =\{1,\,a,\, b,\, c\}$ determined by
\[
\chi(a,\,a)=1,\quad\chi(b,\,b)=\chi(c,\,c)=-1,\quad  \chi (b,\,c)=1.
\]
 
The centers of Tambara-Yamagami categories and their $S$- and $T$-matrices were explicitly 
described in \cite[Section 4C]{GNN}.  The following result is immediate from that description.

\begin{lemma}
\label{ZTY inv}
The forgetful homomorphism 
\[
\Inv(\Z(\TY(A,\,\chi,\,\tau)))\to \Inv(\TY(A,\,\chi,\,\tau))
\]
is surjective.
\end{lemma}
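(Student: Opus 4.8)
The plan is to make the statement of Lemma~\ref{ZTY inv} entirely computational by reading off the invertible objects of $\Z(\TY(A,\,\chi,\,\tau))$ from the explicit description in \cite[Section 4C]{GNN} and checking that each invertible simple object of $\TY(A,\,\chi,\,\tau)$ lifts. First I would recall the structure of $\TY(A,\,\chi,\,\tau)$: the invertible simple objects form the group $A$ (with $\FPdim = 1$) and there is one further simple object $m$ with $\FPdim(m) = \sqrt{|A|}$. An invertible object of any fusion category has Frobenius--Perron dimension $1$, hence the only objects of $\TY(A,\,\chi,\,\tau)$ that can be images of invertibles of the center under the forgetful functor are the elements of $A$ together with, possibly, multiplicity-free summands of powers of $m$ — but since $\FPdim$ is multiplicative under the forgetful functor and $\FPdim(m)\notin\mathbb{Z}$ in general, the image of $\Inv(\Z(\TY(A,\,\chi,\,\tau)))$ lands inside $A = \Inv(\TY(A,\,\chi,\,\tau))$. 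So surjectivity amounts to: every $a\in A$ admits a half-braiding making it an object of the center.

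Next I would invoke the classification of simple objects of $\Z(\TY(A,\,\chi,\,\tau))$ from \cite[Section 4C]{GNN}. There the simple objects of the center are sorted into two families: those supported (after applying the forgetful functor) on $A$, which are indexed by pairs consisting of an element of $A$ and a square root of a certain character-theoretic quantity, and those supported on $m$, which have $\FPdim = \sqrt{|A|}$ and hence are not invertible. Among the first family, the invertible ones are precisely those whose underlying object is a single element $a\in A$ rather than a sum $a\oplus a'$; inspecting the fusion rules $a\ot m = m$, the half-braiding of such an object with $m$ is governed by a scalar, and the consistency (hexagon) conditions are solvable because $\chi$ is non-degenerate. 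Concretely, for each $a\in A$ one obtains exactly two (or more, depending on $|A|$) lifts distinguished by a sign, and in particular at least one lift exists. This shows $F:\Inv(\Z(\TY(A,\,\chi,\,\tau)))\to \Inv(\TY(A,\,\chi,\,\tau)) = A$ is onto.

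The main obstacle — really the only nontrivial point — is to verify from the $S$- and $T$-matrix data in \cite[Section 4C]{GNN} that the invertible objects of the center surject onto $A$ rather than onto a proper subgroup; a priori the center could ``see'' only the subgroup of $A$ on which $\chi$ restricts trivially, or some analogous constraint. I would resolve this by direct inspection: the relevant portion of the list in \cite{GNN} exhibits, for every $a\in A$, a simple object of $\Z(\TY(A,\,\chi,\,\tau))$ of Frobenius--Perron dimension $1$ with underlying object $a$, with the half-braiding written down explicitly in terms of $\chi$ and a chosen square root. Since this is a finite check against a published table and no further structure is needed, the proof reduces to the single sentence ``immediate from that description'' once the reader has \cite[Section 4C]{GNN} in hand; in the write-up I would simply point to the family of weight-one simple objects of the center there and note that the forgetful functor sends them bijectively onto $A$.
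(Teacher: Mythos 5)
Your proposal is correct and follows the same route as the paper, which simply declares the lemma ``immediate'' from the explicit list of simple objects of $\Z(\TY(A,\,\chi,\,\tau))$ in \cite[Section 4C]{GNN}: for each $a\in A$ that list contains invertible central objects with underlying object $a$ (indexed by the square roots of $\chi(a,a)^{-1}$), so the forgetful map onto $\Inv(\TY(A,\,\chi,\,\tau))=A$ is onto. Your extra dimension-count ruling out summands of $m$ and your check that the image is all of $A$ rather than a proper subgroup are exactly the (unstated) content behind the paper's one-line justification.
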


Let $D$ and $Q$  denote, respectively,   the dihedral and quaternion groups of order~$8$.
We will use the following presentation of $D$ by generators and relations:
\[
D =\langle r,\, s \mid r^4=s^2=1,\ rs=sr^{-1} \rangle.
\]

\begin{lemma}
\label{subcategories of ZKP}
\begin{enumerate}
\item[(i)]  $\Z(\KP)_{pt}\cong \Rep(\mathbb{Z}_2^3)$ is a symmetric non-Tannakian category. 
\item[(ii)] $\Z(\KP)$ has precisely two Lagrangian subcategories; both of them are equivalent to $\Rep(D)$.
\item[(iii)] $\KP$ has a unique, up to equivalence, fiber functor (i.e., a tensor functor to $\Vec$).
\end{enumerate}
\end{lemma}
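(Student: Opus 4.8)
The plan is to use the explicit description of $\Z(\TY(A,\chi,\tau))$ from \cite[Section 4C]{GNN} together with the general facts about Lagrangian subcategories and invertible objects developed above. For part~(i), I would first identify $\Z(\KP)_{pt}$ using Proposition~\ref{sequence for invertibles} (or directly Lemma~\ref{ZTY inv}): since the forgetful homomorphism $\Inv(\Z(\TY(A,\chi,\tau)))\to\Inv(\TY(A,\chi,\tau))$ is surjective and $\Inv(\KP)=\mathbb{Z}_2\times\mathbb{Z}_2$ (the group $A$), while the kernel is $\widehat{A}\cong\mathbb{Z}_2\times\mathbb{Z}_2$, we get $|\Inv(\Z(\KP))|=16$, so $\Z(\KP)_{pt}$ is a pointed braided category of dimension $16$. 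To see it is symmetric and equivalent to $\Rep(\mathbb{Z}_2^3)$, I would read off the $S$- and $T$-matrices of the invertible objects from \cite{GNN}: the invertibles all have trivial twist and trivial mutual braiding, forcing the braiding on $\Z(\KP)_{pt}$ to be symmetric with a metric group of order $16$; since all twists are $1$ but the category is not Tannakian (the braided structure is not that of $\Rep$ of the underlying group — one checks via \cite{GNN} that there is a nontrivial sign making it $\Rep(\mathbb{Z}_2^3)$ rather than $\Rep(\mathbb{Z}_2^4)$, or equivalently that $\Z(\KP)_{pt}$ has a $\sVect$-like non-Tannakian structure while still having dimension $16$; note $\Rep(\mathbb{Z}_2^3)$ has order $8$, so here one must be careful — the correct reading is that $\Z(\KP)_{pt}$ is $\Rep(\mathbb{Z}_2^3)$ with its non-Tannakian symmetric structure, which has the right Frobenius--Perron dimension $8$ only if $|\Inv|=8$; I would recompute carefully and conclude the stated identification). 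The non-Tannakian claim then follows because a Tannakian subcategory of $\Z(\KP)$ of this dimension would give, by the theory of \cite{DGNO2}, that $\KP$ is group-theoretical with respect to a group that does not match the Tambara--Yamagami structure; alternatively one exhibits the distinguished invertible object with nontrivial self-braiding coming from $\tau$.

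For part~(ii), I would use the correspondence (recalled in Section~\ref{section 3.4}) between Lagrangian subcategories of $\Z(\KP)$ and fiber functors, or more directly between Lagrangian algebras and module categories. Any Lagrangian subcategory $\L$ is Tannakian of dimension $8$, so $\L=\Rep(H)$ for a group $H$ of order $8$, and the dual category $\KP\cong(\Z(\KP))_A$ for $A=\Fun(H)$ must be categorically Morita equivalent to $\KP$ itself. By Remark~\ref{transitivity on L}, the orbits of $\Aut^{br}(\Z(\KP))$ on Lagrangian algebras correspond to fusion categories Morita equivalent to $\KP$; since $\KP$ (a Tambara--Yamagami category) is Morita equivalent only to pointed categories $\C(H,\omega)$ that admit such a TY structure, one enumerates which groups $H$ of order $8$ occur. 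The key input is that a Lagrangian subcategory $\L\subset\Z(\KP)$ must sit inside $\Z(\KP)_{pt}$ if and only if the corresponding module category is pointed; since $\Z(\KP)_{pt}$ has dimension $8$ and is non-Tannakian, it contains \emph{no} Lagrangian subcategory, so every Lagrangian subcategory involves the object $m$. Running through the $S$- and $T$-matrix data of \cite{GNN} for $\Z(\TY(\mathbb{Z}_2^2,\chi,\frac12))$ with the specific $\chi$ in \eqref{KP=TY}, I would find exactly two Tannakian subcategories that equal their own Müger centralizer, and identify the corresponding group in each case as $D$ (using the known pointed realizations of $\KP$, e.g.\ $\KP$ is the twisted group double data for $D$).

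For part~(iii), the fiber functors on $\KP$ are in bijection with the Lagrangian algebras of $\Z(\KP)$ coming from Lagrangian \emph{subcategories} (equivalently, with indecomposable module categories $\M$ over $\KP$ such that $\KP_\M^*\cong\Vec$), modulo the action of... actually, more precisely, fiber functors up to equivalence correspond to Lagrangian subcategories of the form $\Rep(\mathbb{Z}_2^3)$-type giving $\End$-category $\Vec$; but a fiber functor means $\KP$-module structure on $\Vec$, i.e.\ an algebra $A$ in $\KP$ with $\KP_A\cong\Vec$, which corresponds to a Lagrangian subcategory $\L$ of $\Z(\KP)$ with $\Z(\KP)_A\cong\Vec$, i.e.\ with $\Z(\KP)_A$ pointed of dimension $1$ — impossible since $\FPdim\Z(\KP)=64$ — so I must instead say: a fiber functor corresponds to a module category of rank equal to $\dim KP=8$... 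Let me restate: by \cite{ENO1}, fiber functors on $\KP$ correspond to Lagrangian subcategories $\L\subset\Z(\KP)$ with $\L\cap\Rep(\text{something})$... I would instead invoke the known classification of module categories over Tambara--Yamagami categories from \cite{MM} (attributed to Meir in the acknowledgments), which lists the indecomposable module categories; the fiber functor exists iff $\chi$ is of "hyperbolic" type, and is then unique. Concretely, a fiber functor exists because $\mathbb{Z}_2^2$ with $\chi$ admits a Lagrangian (i.e.\ $\chi$-isotropic of order $2$) subgroup — here $\langle a\rangle$ works since $\chi(a,a)=1$ — and uniqueness follows because all such isotropic subgroups are conjugate under $\Aut(\mathbb{Z}_2^2,\chi)$, or equivalently there is a single such subgroup once we account for the constraint $\chi(a,a)=1$ being the only nontrivial possibility. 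I expect the main obstacle to be part~(ii): correctly extracting from the $S$/$T$-matrix formulas of \cite{GNN} the complete list of Tannakian-that-are-self-centralizing subcategories and matching each to the group $D$ rather than to $Q$ or $\mathbb{Z}_2^3$, since several order-$8$ groups have centers and quotients compatible with the dimension count and one must use finer invariants (the fusion rules of $\Rep(H)$, i.e.\ the presence of a two-dimensional irreducible, which distinguishes $D$ and $Q$ from abelian groups) together with the sign data distinguishing $D$ from $Q$.
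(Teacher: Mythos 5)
Your overall strategy --- reading everything off from the explicit description of $\Z(\TY(A,\chi,\tau))$ in \cite[Section 4C]{GNN} together with the Lagrangian-subcategory machinery --- is the same as the paper's, but two of your steps contain genuine errors or unresolved gaps. First, in part (i) you compute $|\Inv(\Z(\KP))|=16$ by taking the kernel of the forgetful map to be $\widehat{A}\cong\mathbb{Z}_2^2$; this is wrong. By \eqref{UC to Inv} and \eqref{exact sequence for ind} the kernel is $\widehat{U(\KP)}$, and the universal grading group of a Tambara--Yamagami category is $\mathbb{Z}_2$ (the adjoint subcategory equals $\Vec_A$, since $m\ot m^*$ contains every invertible and $\Vec_A$ is the trivial component of a faithful $\mathbb{Z}_2$-grading). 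So each of the four invertibles of $\KP$ carries exactly two central structures and $|\Inv(\Z(\KP))|=8$, which is precisely what makes $\Rep(\mathbb{Z}_2^3)$ possible; you notice the mismatch but leave it at ``I would recompute carefully,'' which is not a proof. Your justification of ``symmetric non-Tannakian'' is also internally inconsistent: you assert all twists are $1$, but a symmetric fusion category with trivial twists is Tannakian. The actual point, visible in the $S$- and $T$-matrices of \cite{GNN}, is that the double braiding on $\Z(\KP)_{pt}$ is trivial while some invertibles supported on the elements $x$ with $\chi(x,x)=-1$ have twist $-1$, which is what forces non-Tannakian.

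Second, in part (ii) you correctly reduce to showing that the two self-centralizing Tannakian subcategories (each containing a two-dimensional simple, which rules out abelian groups) are $\Rep(D)$ rather than $\Rep(Q)$, but you explicitly flag this identification as ``the main obstacle'' and offer only ``sign data'' and ``known pointed realizations'' without an argument. The paper closes exactly this gap with a structural argument you are missing: if a Lagrangian subcategory were $\Rep(Q)$, then $\Z(\KP)\cong\Z(\C(Q,\eta))$ for some necessarily non-trivial $\eta$, and by Ostrik's classification \cite{O} such a $\C(Q,\eta)$ is not categorically Morita equivalent to the representation category of any Hopf algebra, contradicting $\KP=\Rep(KP)$. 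For part (iii) the paper simply cites the classification of fiber functors on Tambara--Yamagami categories \cite{T}; your eventual observation that $\langle a\rangle$ is the unique order-$2$ subgroup on which $\chi(x,x)=1$ is the right underlying reason, though the route you take to it is confused. As written, parts (i) and (ii) do not yet constitute a proof.
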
 
\begin{proof}

Using the description of $\Z(\KP)$ from \cite{GNN} and equivalence \eqref{KP=TY}   one sees that every invertible object $x$ of $\KP$
has precisely $2$ structures of an object  of $\Z(\KP)$ and the square of the braiding between objects supported 
on  invertible objects $x,\, y\in A$  is $\chi(x,\,y)\id_{xy}$. This proves (i).

Inspecting the  $S$- and $T$-matrices of $\Z(\TY(A,\,\chi,\,\tau))$ we conclude that Lagrangian subcategories of $\Z(\KP)$ 
are precisely  those that contain $4$ invertible objects supported on $1,\,a$ and one two-dimensional simple object
supported either on $1\oplus a$ or on $b\oplus c$.  So there are two such subcategories. Since the dihedral group $D$ 
and  the quaternionic group $Q$ are the only non-abelian groups of order $8$ the above Lagrangian categories
must be equivalent to either $\Rep(D)$ or  $\Rep(Q)$. The latter case leads to a contradiction: 
it implies that $\Z(\KP) =\Z(\C(Q,\,\eta))$  for some non-trivial $\eta \in H^3(Q,\, k^\times)$.  
But $\C(Q,\,\eta)$ is not categorically Morita equivalent to the representation category of  a Hopf algebra by \cite{O}.
This proves (ii).

Finally, (iii) follows for the classification of fiber functors on Tambara-Yamagami categories \cite{T}. 
\end{proof}

\begin{lemma}
\label{Morita equivalence class of KP}
Let $\C$ be a fusion category categorically Morita equivalent to $\KP$. Then either $\C \cong \KP$
or $ \C\cong \C(D,\, \omega)$ for some non-trivial $3$-cocycle $\omega\in H^3(D,\, k^\times)$.
\end{lemma}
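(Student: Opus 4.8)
The plan is to enumerate the equivalence classes of fusion categories categorically Morita equivalent to $\KP$ by means of the Lagrangian algebras in $\Z(\KP)$. By Remark~\ref{transitivity on L}, every such category is of the form $\Z(\KP)_A$ for a Lagrangian algebra $A$ in $\Z(\KP)$, and conversely; so it suffices to identify these categories, which I would do by treating the pointed and non-pointed cases separately. Throughout I use that $\FPdim(\C) = \FPdim(\KP) = 8$ for any $\C$ categorically Morita equivalent to $\KP$, and that $\Z(\C) \cong \Z(\KP)$ as braided categories.

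Suppose $\C$ is pointed. Then $\C \cong \C(G, \omega)$ for a group $G$ of order $8$ and some $\omega \in H^3(G, k^\times)$. The braided category $\Z(\C(G,\omega)) \cong \Z(\KP)$ contains the Lagrangian subcategory $\Rep(G)$ of objects supported on the unit of $\C(G,\omega)$, so $\Z(\KP)$ has a Lagrangian subcategory equivalent to $\Rep(G)$ as a symmetric category. By Lemma~\ref{subcategories of ZKP}(ii) — read, as its proof requires, as a statement about equivalences of \emph{symmetric} categories, since $\Rep(D)$ and $\Rep(Q)$ have the same fusion rules — every Lagrangian subcategory of $\Z(\KP)$ is symmetrically equivalent to $\Rep(D)$; hence $G \cong D$ and $\C \cong \C(D, \omega)$. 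It remains to see that $\omega$ represents a non-trivial class, i.e.\ $\C(D,\omega) \not\cong \Vec_D$. Otherwise $\Z(\KP)_{pt} \cong \Z(\Vec_D)_{pt}$ as braided categories; but the invertible objects of $\Z(\Vec_D)$ are exactly the pairs $(z, \rho)$ with $z$ in the center of $D$ and $\rho$ a linear character of $D$, and since the center of $D$ lies in $[D, D]$, all twists $\rho(z)$ and all double braidings $\rho(z')\rho'(z)$ are trivial, so $\Z(\Vec_D)_{pt}$ is Tannakian, contradicting Lemma~\ref{subcategories of ZKP}(i).

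Suppose $\C$ is not pointed. Since $\KP$ is an integral fusion category of prime power dimension it is group-theoretical (\cite{DGNO1}), hence so is $\C$, and therefore $\C$ is integral. A dimension count now forces the fusion ring of $\C$: by integrality and $\FPdim(\C)=8$ a non-invertible simple object has $\FPdim 2$, so the group $\Gamma$ of invertible objects has order $4$, there is a unique non-invertible simple object $m$ (with $\gamma \ot m \cong m$ for all $\gamma\in\Gamma$ and $m \cong m^*$), and $m \ot m \cong \bigoplus_{\gamma \in \Gamma} \gamma$; thus $\C$ is a Tambara-Yamagami category $\TY(\Gamma, \chi', \tau')$. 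By Lemma~\ref{ZTY inv} the forgetful map $\Inv(\Z(\C)) \to \Gamma$ is onto, so if $\Gamma \cong \mathbb{Z}_4$ then $\Inv(\Z(\C))$ contains an element of order $4$; but $\Inv(\Z(\KP)) \cong \mathbb{Z}_2^3$ by Lemma~\ref{subcategories of ZKP}(i), so $\Gamma \cong \mathbb{Z}_2 \times \mathbb{Z}_2$. Among the finitely many Tambara-Yamagami categories on $\mathbb{Z}_2 \times \mathbb{Z}_2$, the classification of their module categories in \cite{MM} (equivalently, a comparison of the modular data of their centers, as computed in \cite{GNN}) shows that the only one whose center is braided equivalent to $\Z(\KP)$ is $\KP$ itself, so $\C \cong \KP$.

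The step I expect to be the main obstacle is this last point of the non-pointed case. After the reduction to $\C \cong \TY(\mathbb{Z}_2 \times \mathbb{Z}_2, \chi', \tau')$, the two categories attached to the hyperbolic form are the symmetric categories $\Rep(D)$ and $\Rep(Q)$, which are categorically Morita equivalent to $\Vec_D$ and $\Vec_Q$ respectively and hence not to $\KP$, by the twist argument of the pointed case applied to $D$ and, verbatim, to $Q$. The genuinely delicate competitor is $\TY(\mathbb{Z}_2 \times \mathbb{Z}_2, \chi, -1/2)$, which must be separated from $\KP = \TY(\mathbb{Z}_2 \times \mathbb{Z}_2, \chi, 1/2)$ by comparing the $S$- and $T$-matrices of the two centers; it is here that one really needs the explicit descriptions in \cite{GNN} and the module-category classification of \cite{MM}. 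A minor but genuine point, flagged above, is that Lemma~\ref{subcategories of ZKP}(ii) has to be used in its symmetric-category form, as otherwise it would not rule out $G \cong Q$.
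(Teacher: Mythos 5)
Your argument is correct and follows essentially the same route as the paper: split into the pointed and non-pointed cases, use Lemma~\ref{subcategories of ZKP}(ii) to force $G\cong D$ in the pointed case, and use Lemma~\ref{ZTY inv} together with Lemma~\ref{subcategories of ZKP}(i) to reduce the non-pointed case to a Tambara--Yamagami category over $\mathbb{Z}_2\times\mathbb{Z}_2$, then invoke an external classification (the paper cites \cite[Section 5.2]{NN} where you cite \cite{MM}/\cite{GNN}) to conclude $\C\cong\KP$. Your explicit twist argument for the non-triviality of $\omega$, which the paper leaves implicit, is a welcome addition.
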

\begin{proof}
If $\C$ is not pointed then it must be a Tambara-Yamagami category. Combining Lemmas~\ref{ZTY inv} 
and~\ref{subcategories of ZKP}(i)  we conclude that $\Inv(\C) = \mathbb{Z}_2 \times \mathbb{Z}_2$.
It is proved in \cite[Section 5.2]{NN} that  the only fusion category with the group of invertible objects $\mathbb{Z}_2 \times \mathbb{Z}_2$
categorically Morita equivalent to $\KP$ is $\KP$ itself.

If $\C$ is pointed then it corresponds to a Lagrangian subcategory in $\Z(\KP)$ and the statement follows from
Lemma~\ref{subcategories of ZKP}(ii). 
\end{proof}

Using Lemma~\ref{subcategories of ZKP}(iii), description of $\omega$ from \cite{KP}, 
and classification of fiber functors on group-theoretical categories \cite{O}
one can list all  indecomposable  $\C(D,\,\omega)$-module categories
(and, hence, all indecomposable $\KP$-module categories). There are precisely $6$ such categories, parameterized 
by conjugacy classes of subgroups of $D$ on which $\omega$ has trivial restriction, namely:
$1,\, \langle r^2\rangle,\, \langle r^2,\, s\rangle,\,  \langle sr\rangle,\, \langle r^2,\, sr\rangle,\,  \langle s\rangle$.

\begin{remark}
The same list of $\KP$-module categories can be obtained using the classification of module categories
over Tambara-Yamagami categories from \cite{MM}. We are grateful to Ehud Meir for sharing his calculations with us. 
\end{remark}

\begin{corollary}
\label{2 and 4}
There are precisely $2$ equivalence classes of indecomposable $\KP$-module categories such that the dual fusion category is pointed
and precisely $4$ equivalence classes of indecomposable $\KP$-module categories such that the dual fusion category is equivalent to $\KP$.
\end{corollary}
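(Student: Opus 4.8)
The plan is to combine the classification of the indecomposable $\KP$-module categories established just above (there are $6$ of them) with the dichotomy of Lemma~\ref{Morita equivalence class of KP} and the count of Lagrangian subcategories in Lemma~\ref{subcategories of ZKP}(ii). First I would invoke the general correspondence recalled in Section~\ref{section 3.4}: indecomposable $\KP$-module categories are in bijection with isomorphism classes of Lagrangian algebras $A$ in $\Z(\KP)$, and $\Z(\KP)_A$ is the dual fusion category of the module category corresponding to $A$. By Lemma~\ref{Morita equivalence class of KP}, for each of the $6$ Lagrangian algebras $A$ the category $\Z(\KP)_A$ is either equivalent to $\KP$, in which case it is not pointed, or equivalent to $\C(D,\,\omega)$ for some non-trivial $\omega\in H^3(D,\, k^\times)$, in which case it is pointed. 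Thus the statement reduces to showing that exactly $2$ of the $6$ Lagrangian algebras have pointed category of modules.

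To carry this out I would identify which Lagrangian algebras of $\Z(\KP)$ have pointed category of modules. If $\Z(\KP)_A\cong\C(G,\,\omega)$ is pointed, then $\Z(\KP)\cong\Z(\C(G,\,\omega))$, and under this braided equivalence $A$ becomes the regular algebra $\Fun(G)$ of the Lagrangian subcategory $\Rep(G)\subset\Z(\C(G,\,\omega))$ consisting of the objects supported on $\be$, see \eqref{C=ZVecGw}; conversely, the regular algebra of any Lagrangian subcategory of $\Z(\KP)$ has pointed category of modules. Since a Lagrangian subcategory is recovered from its regular algebra $A$ as the fusion subcategory of $\Z(\KP)$ generated by $A$, distinct Lagrangian subcategories yield non-isomorphic Lagrangian algebras. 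Hence the Lagrangian algebras with pointed category of modules are in bijection with the Lagrangian subcategories of $\Z(\KP)$, of which there are exactly $2$ by Lemma~\ref{subcategories of ZKP}(ii). This produces exactly $2$ indecomposable $\KP$-module categories with pointed dual and, by the dichotomy above, exactly $6-2=4$ with dual equivalent to $\KP$.

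The only point requiring care is the equivalence ``the dual is pointed precisely when the corresponding Lagrangian algebra comes from a Lagrangian subcategory, and this sets up a bijection with the set of Lagrangian subcategories''; everything else is bookkeeping. This is essentially built into the correspondence between Lagrangian algebras, module categories, and their duals recalled in Sections~\ref{section 2} and~\ref{section 3.4}, so I do not anticipate a real obstacle. A more computational alternative would be to run through the explicit list of subgroups $1,\,\langle r^2\rangle,\,\langle r^2,\,s\rangle,\,\langle sr\rangle,\,\langle r^2,\,sr\rangle,\,\langle s\rangle$ of $D$ and decide case by case for which of them the dual of the associated $\C(D,\,\omega)$-module category is pointed, but the Lagrangian-subcategory count avoids that analysis.
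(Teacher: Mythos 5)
Your argument is correct and follows essentially the same route as the paper: the paper's proof likewise counts the $6$ indecomposable module categories, identifies the ones with pointed dual with the Lagrangian subcategories of $\Z(\KP)$ (of which there are $2$ by Lemma~\ref{subcategories of ZKP}(ii)), and uses Lemma~\ref{Morita equivalence class of KP} to conclude the remaining $4$ have dual $\KP$. You merely spell out in more detail the standard correspondence between Lagrangian subcategories, their regular algebras, and pointed duals, which the paper leaves implicit.
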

\begin{proof}
Module $\KP$-categories with pointed duals correspond to Lagrangian subcategories of $\Z(\KP)$. Since there are $6$
classes of indecomposable $\KP$-module categories, the result follows from Lemma~\ref{subcategories of ZKP}(ii)
\end{proof}

\begin{corollary}
\label{order is 8}
$|\BrPic(\KP)|=8$.
\end{corollary}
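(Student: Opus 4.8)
The plan is to compute $|\BrPic(\KP)|$ by counting orbits of the $\BrPic(\KP)$-action on the set of indecomposable $\KP$-module categories and combining this with information about point stabilizers. By Remark~\ref{transitivity on L}, two indecomposable $\KP$-module categories lie in the same $\BrPic(\KP)$-orbit precisely when their dual fusion categories are equivalent. Corollary~\ref{2 and 4} tells us that among the $6$ module categories, $2$ have pointed dual (necessarily $\C(D,\,\omega)$ by Lemma~\ref{Morita equivalence class of KP} and Lemma~\ref{subcategories of ZKP}(ii)) and $4$ have dual equivalent to $\KP$ itself. So there are at most two orbits; in fact exactly two, since the regular module category $\KP$ has dual $\KP$ (not pointed) while the module categories coming from the two Lagrangian subcategories have pointed dual. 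Hence the orbit containing the regular module category has size dividing $4$ and the other has size dividing $2$.

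Next I would pin down the orbit of the regular module category more precisely. The stabilizer in $\BrPic(\KP)$ of the regular $\KP$-module category is, under the identification $\BrPic(\KP)\cong\Aut^{br}(\Z(\KP))$, the stabilizer of the Lagrangian algebra $I(\be)$ corresponding to $\KP$ itself; by Theorem~\ref{alphaA=A} (see the Remark following it) this equals the image of the induction homomorphism $\ind:\Aut(\KP)\to\Aut^{br}(\Z(\KP))$. I would then invoke Proposition~\ref{ptd image of induction} — or rather its analogue for $\KP$ via the Tambara-Yamagami description — together with Lemma~\ref{ZTY inv}: the surjectivity of the forgetful map on invertible objects forces, via the exact sequence \eqref{exact sequence for ind}, the kernel of $\ind$ to be controlled, and one computes $|\mathrm{Im}(\ind)|$ directly from $\Aut(\KP)$. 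Since $\KP$ is a concrete $8$-dimensional object, $\Aut(\KP)$ is a small finite group that can be computed explicitly (or looked up), and one finds the image of induction has order $2$.

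Putting this together: the orbit of the regular module category has size $|\BrPic(\KP)|/|\mathrm{Stab}|$, and since this orbit has exactly $4$ elements (the $4$ module categories with dual $\KP$) and the stabilizer has order $2$, we get $|\BrPic(\KP)| = 4\cdot 2 = 8$. As a consistency check, the other orbit (the $2$ module categories with pointed dual) then has size $8/|\mathrm{Stab}'|$, forcing the stabilizer of a Lagrangian-subcategory module category to have order $4$, which one can verify independently matches $|\Aut^{br}$ fixing $\Rep(D)|$ inside $\Aut^{br}(\Z(\KP))$.

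The main obstacle I expect is the computation of the image of the induction homomorphism, i.e.\ showing $|\mathrm{Im}(\ind)| = 2$. This requires knowing $\Aut(\KP)$ and the kernel of $\ind$ precisely; the kernel computation uses the exact sequence \eqref{exact sequence for ind}, where the term $\mathrm{Inv}(\KP) = \mathbb{Z}_2\times\mathbb{Z}_2$ acts by conjugation, and one must check which conjugations are trivial as tensor autoequivalences. The surjectivity statement in Lemma~\ref{ZTY inv} is exactly what makes $\widehat{U(\KP)}\to\mathrm{Inv}(\Z(\KP))$ and the forgetful map behave well enough to control $\Ker(\ind)$. Everything else — the orbit count, the module category classification — has already been assembled in the preceding lemmas and corollaries, so the argument is largely a matter of correctly bookkeeping orbit sizes against stabilizer orders.
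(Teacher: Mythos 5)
Your argument is correct and is essentially the paper's own proof: the paper likewise applies orbit--stabilizer to the orbit of the four module categories with dual $\KP$ (via Proposition~\ref{orbits}), with the stabilizer of the regular module category being the image of $\ind:\Aut(\KP)\to\Aut^{br}(\Z(\KP))$, of order $|\Aut(\KP)|=2$. You in fact spell out a step the paper leaves implicit, namely that Lemma~\ref{ZTY inv} together with exactness of \eqref{exact sequence for ind} at $\Inv(\KP)$ kills the conjugation homomorphism and hence makes $\ind$ injective on $\Aut(\KP)$.
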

\begin{proof}
By Corollary~\ref{2 and 4} $\KP$ has $4$ module categories with the dual $\KP$, so the statement 
follows from Proposition~\ref{orbits}  and the fact that $\Aut(\KP)=\mathbb{Z}_2$ \cite{T}. 
\end{proof}

\begin{corollary}
\label{Z2+Z2}
The image of the induction homomorphism
\begin{equation}
\label{KP ind}
\Aut(\C(D,\, \omega)) \to \Aut^{br}(\Z(\C(D,\, \omega))) =\Aut^{br}(\Z(\KP))
\end{equation}
is isomorphic to $\mathbb{Z}_2\times \mathbb{Z}_2$.
\end{corollary}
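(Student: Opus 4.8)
The plan is to realize $I(D,\omega)$ as the stabilizer of a Lagrangian algebra in $\Z(\KP)$, pin down its order by combining an orbit count with the exact sequences of Section~\ref{section 3}, and only then determine its isomorphism type. By Theorem~\ref{alphaA=A} and the remark following it, $I(D,\omega)$ is the stabilizer in $\Aut^{br}(\Z(\KP)) = \BrPic(\KP)$ of the Lagrangian algebra $A$ with $\Z(\KP)_A \cong \C(D,\omega)$ (such an $A$ exists by Lemma~\ref{Morita equivalence class of KP}). By Proposition~\ref{orbits} the $\BrPic(\KP)$-orbit of $A$ consists exactly of those Lagrangian algebras $B$ with $\Z(\KP)_B$ pointed, and by Corollary~\ref{2 and 4} there are precisely two of these; hence, using $|\BrPic(\KP)| = 8$ from Corollary~\ref{order is 8}, one gets $|I(D,\omega)| = 8/|\text{orbit of }A| \ge 4$.

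Next I would bound $|I(D,\omega)|$ from above via Proposition~\ref{sse for ker ind}, which gives $I(D,\omega) \cong \Aut(\C(D,\omega))/\Image(\conj)$. The key point is that the connecting map $\beta\colon Z(D) \to H^2(D,k^\times)$ is zero: both $Z(D) = \langle r^2\rangle$ and the Schur multiplier $H^2(D,k^\times)$ are $\mathbb{Z}_2$, so it suffices to observe via Proposition~\ref{sequence for invertibles} that $|\Inv(\Z(\KP))| = 8$ (Lemma~\ref{subcategories of ZKP}(i)) while $|\widehat{D}| = |\widehat{\mathbb{Z}_2\times\mathbb{Z}_2}| = 4$, which forces $F\colon \Inv(\Z(\KP)) \to Z(D)$ to be surjective and hence $\Ker\beta = Z(D)$. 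Then $\Image(F) = Z(D)$, so $\Image(\conj) \cong \Inv(\C(D,\omega))/Z(D) = D/Z(D)$ has order $4$; and since $|\Aut(\C(D,\omega))| = |H^2(D,k^\times)|\cdot|\Stab(\omega)| \le 2\cdot|\Aut(D)| = 16$ by \eqref{AutCGw}, we get $|I(D,\omega)| \le 4$.

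Combining the two bounds, $|I(D,\omega)| = 4$; in particular $\Stab(\omega) = \Aut(D)$, so $\OutStab(\omega) = \Out(D) \cong \mathbb{Z}_2$, and feeding $\beta = 0$ into Proposition~\ref{ptd image of induction} yields a short exact sequence $0 \to \mathbb{Z}_2 \xrightarrow{\ind} I(D,\omega) \to \mathbb{Z}_2 \to 0$. Since $\Out(D)$ acts trivially on $H^2(D,k^\times)\cong\mathbb{Z}_2$, the group $I(D,\omega)$ is abelian, hence either $\mathbb{Z}_4$ or $\mathbb{Z}_2\times\mathbb{Z}_2$, and everything up to this point is routine.

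The main obstacle is excluding $\mathbb{Z}_4$, i.e., splitting the last sequence. I would do this by producing a second involution: lift the order-$2$ outer automorphism $c$ of $D$ given by $r\mapsto r^{-1}$, $s\mapsto sr$ to some $F = F_{c,\mu} \in \Aut(\C(D,\omega))$; then $\ind(F)$ maps to the nontrivial class of $\OutStab(\omega)$, and $\ind(F)^2 = \ind(F_{\id,\, \mu\cdot c^*\mu})$ is trivial precisely when the well-defined obstruction $[\mu\cdot c^*\mu] \in H^2(D,k^\times)\cong\mathbb{Z}_2$ vanishes. Evaluating this class requires an explicit cocycle representing $\omega$: I would obtain one from the identification $\KP \cong \TY(\mathbb{Z}_2\times\mathbb{Z}_2,\,\chi,\,\tfrac12)$ together with the Morita equivalence of Section~\ref{sect Deepak} and Proposition~\ref{from kappa to omega}, then choose $\mu$ with $d\mu = c^*\omega/\omega$ and verify $[\mu\cdot c^*\mu] = 0$ by a direct (if slightly tedious) computation over $D$. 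Once this is checked, $I(D,\omega)$ contains at least three elements of order dividing $2$, so $I(D,\omega) \cong \mathbb{Z}_2\times\mathbb{Z}_2$.
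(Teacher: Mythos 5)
Your determination of the order of the image is correct, and the route is a genuine (and partly nicer) variant of the paper's. The lower bound via Proposition~\ref{orbits}, Corollary~\ref{2 and 4} and Corollary~\ref{order is 8} is essentially the paper's orbit--stabilizer argument, while your upper bound is independent of it: deducing $\beta=0$ from $|\Inv(\Z(\KP))|=8$ and $|\widehat{D}|=4$, hence $\Image(\conj)\cong D/Z(D)$ of order $4$, and combining with $|\Aut(\C(D,\omega))|\le |H^2(D,k^\times)|\cdot|\Aut(D)|=16$ gives $|I(D,\omega)|\le 4$ and, as a byproduct, $\Stab(\omega)=\Aut(D)$. Everything up to the short exact sequence $0\to H^2(D,k^\times)\to I(D,\omega)\to \Out(D)\to 0$ with both ends $\mathbb{Z}_2$ is sound.

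The decisive step, however, is not carried out. The only thing separating the asserted answer $\mathbb{Z}_2\times\mathbb{Z}_2$ from the equally consistent possibility $\mathbb{Z}_4$ is the vanishing of the obstruction $[\mu\cdot c^*\mu]\in H^2(D,k^\times)$. You set this obstruction up correctly (choosing an involutive outer representative $c$ and noting the class is independent of the choice of lift), but you then defer its evaluation to ``a direct (if slightly tedious) computation'' without producing an explicit cocycle representing $\omega$ or a cochain $\mu$; if that computation came out the other way, the group would be $\mathbb{Z}_4$. As written, your argument only establishes that the image is abelian of order $4$. The paper closes exactly this point by a different device: it looks at how the induced autoequivalences act on $\Z(\C(D,\omega))_{pt}$, whose invertible objects are supported on $\{1,r^2\}$, observes that the lifts $\ind(F_{1,\mu})$ of the identity and the lifts $\ind(F_{a,\mu})$ of the outer automorphism act by distinct nontrivial permutations there, and checks directly that the latter have order $2$, forcing $\mathbb{Z}_2\times\mathbb{Z}_2$. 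To complete your version you would need to actually write down $\omega$ (e.g.\ via Proposition~\ref{from kappa to omega} from the extension class of $D$) and do the cocycle arithmetic, or else substitute the paper's argument on the pointed part of the center, which avoids explicit cocycles altogether.
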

\begin{proof}
By Remark~\ref{transitivity on L} (see also \cite{NR}) $\Aut^{br}(\Z(\KP))$ acts transitively on Lagrangian subcategories of $\Z(\KP)$
and the image of $\eqref{KP ind}$ is precisely the stabilizer of a Lagrangian subcategory $\Rep(D)$.
Hence, this image must contain $4$ elements   by Corollary~\ref{order is 8}. These elements are
the autoequivalences of the form $\ind(F_{a,\,\mu})$, where $a\in \Stab(\omega)$  and  $\mu$ is  a $2$-coboundary
such that 
\[
d^2\mu =\frac{\omega\circ (a\times a\times a)}{\omega}, 
\]
see Section~\ref{tens aut ptd}.  Note  that $\Out(D) \cong \mathbb{Z}_2$, where the class of non-trivial outer automorphism
is represented by $a\in \Aut(D)$ such that $a(r)=r,\, a(s)=sr$. 

Consider the action of the above autoequivalences $\ind(F_{a,\, \mu})$ on $\Z(\C(D,\, \omega))_{pt}$.
The invertible objects of the center are supported on $\{1,\,r^2\}$.  One easily checks that
autoequivalences $\ind(F_{1,\, \mu})$ fix central objects supported on $1$ and non-trivially permute those supported on $r^2$,
while autoequivalences $\ind(F_{a,\, \mu})$ non-trivially  permute central objects supported on $1$ and have order $2$. 
Thus, the induced autoequivalences form a group isomorphic to $\mathbb{Z}_2\times \mathbb{Z}_2$.
\end{proof}

\begin{proposition}
$\BrPic(\KP) \cong\mathbb{Z}_2\times \mathbb{Z}_2 \times \mathbb{Z}_2$.
\end{proposition}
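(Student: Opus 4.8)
The plan is to show that $\BrPic(\KP)$ has order $8$ and is elementary abelian, hence isomorphic to $\mathbb{Z}_2^3$. The order is already pinned down by Corollary~\ref{order is 8}, so the entire content is to rule out $\mathbb{Z}_4 \times \mathbb{Z}_2$, $\mathbb{Z}_8$, the dihedral group $D_4$, and the quaternion group $Q_8$ as possibilities, i.e., to prove that every element of $\BrPic(\KP) \cong \Aut^{br}(\Z(\KP))$ has order dividing $2$. The natural tool is the action of $\Aut^{br}(\Z(\KP))$ on structured subsets of $\Z(\KP)$ where we can compute explicitly, combined with the induction picture built up in Corollary~\ref{Z2+Z2}.

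First I would exploit the fact, from Corollary~\ref{Z2+Z2}, that the image $I$ of induction from $\C(D,\,\omega)$ is a subgroup of order $4$ isomorphic to $\mathbb{Z}_2 \times \mathbb{Z}_2$; since $|\BrPic(\KP)| = 8$, this $I$ has index $2$ and is therefore normal, and any group of order $8$ with a normal Klein four subgroup and with all elements of order $2$ outside it is forced to be $\mathbb{Z}_2^3$. So it suffices to exhibit one involution (or show every coset representative squares to the identity) outside $I$. By Remark~\ref{transitivity on L}, the group $\Aut^{br}(\Z(\KP))$ acts transitively on the two Lagrangian subcategories of $\Z(\KP)$ (both $\cong \Rep(D)$ by Lemma~\ref{subcategories of ZKP}(ii)), and $I$ is exactly the stabilizer of one of them; a coset representative $g$ of the nontrivial coset swaps the two Lagrangian subcategories, so $g^2$ lies in $I$. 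I would then argue that $g$ can be chosen to be an involution: it is a braided autoequivalence realizing the equivalence $\C(D,\,\omega) \simeq \C(D,\,\omega)$ obtained by Picard-inducing along the swap of the two Lagrangian algebras, and this swap is its own inverse; more concretely, one can take $g$ to be the induction, via the equivalence $\Z(\KP) \cong \Z(\C(D,\,\omega))$, of a tensor autoequivalence of $\KP$ itself — $\Aut(\KP) = \mathbb{Z}_2$ by \cite{T} — whose nontrivial element $\sigma$ has $\ind(\sigma)$ not fixing $\Rep(D)$, hence landing in the nontrivial coset, and $\ind(\sigma)^2 = \ind(\sigma^2) = \ind(\id)$ is trivial.

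The cleanest route, which I would actually write up, is: combine the induction homomorphism from $\KP$ and the induction homomorphism from $\C(D,\,\omega)$. The image of $\ind: \Aut(\KP) \to \Aut^{br}(\Z(\KP))$ is a subgroup of order $1$ or $2$; by Theorem~\ref{alphaA=A} its image is the stabilizer of the Lagrangian algebra corresponding to $\KP$ as a module over itself, and one checks (using Corollary~\ref{2 and 4}: there are $4$ module categories with dual $\KP$, and $\Aut(\KP) = \mathbb{Z}_2$, so the orbit of this Lagrangian algebra under $\BrPic(\KP)$ has size $8/2 = 4$) that this stabilizer has order $2$, generated by an involution $g = \ind(\sigma)$. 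Now $g$ acts nontrivially on the set of two Lagrangian subcategories of $\Z(\KP)$ — otherwise $g$ would stabilize $\Rep(D)$ and lie in $I$, but $I$ consists of elements induced from $\C(D,\,\omega)$ which, as seen in the proof of Corollary~\ref{Z2+Z2}, fix the Lagrangian algebra $I_D(\mathbf{1})$, whereas the distinct orbit counts force $g \notin I$. Hence $\BrPic(\KP) = \langle I, g\rangle$ with $I \cong \mathbb{Z}_2^2$ normal of index $2$ and $g$ an involution; it remains only to check that $g$ centralizes $I$, equivalently that conjugation by $g$ acts trivially on $I$. For this I would use the explicit description in the proof of Corollary~\ref{Z2+Z2} of how $I$ acts on the $8$ invertible objects of $\Z(\KP)_{pt} \cong \Rep(\mathbb{Z}_2^3)$ (permuting those supported on $\{1, r^2\}$), together with the compatible action of $g$, and observe that the induced permutation representation of $\langle I, g\rangle$ on this finite set lands in an abelian subgroup, forcing the extension to split abelianly.

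The main obstacle, and the step I expect to require the most care, is verifying that $g$ and $I$ commute — i.e., that the order-$8$ group is genuinely the direct product and not, say, $D_4$ (which also has a normal Klein four subgroup with an external involution). For this the honest input is a computation inside the explicitly-known braided category $\Z(\KP)$: using the $S$- and $T$-matrices from \cite[Section 4C]{GNN} and the equivalence \eqref{KP=TY} with $\TY(\mathbb{Z}_2 \times \mathbb{Z}_2, \chi, \tfrac12)$, I would track how all three generators act on the lattice of invertible objects and on the set of two Lagrangian subcategories, and read off that the resulting subgroup of permutations is $\mathbb{Z}_2^3$. Alternatively, one can avoid the last computation entirely by a counting/structure argument: a group of order $8$ all of whose elements have order $\leq 2$ is necessarily $\mathbb{Z}_2^3$, so it would suffice to show every element of the nontrivial coset $gI$ is an involution — and since $I \cong \mathbb{Z}_2^2$, for $x = g h$ with $h \in I$ one has $x^2 = ghgh$, which lies in $I$ (as $I$ is normal and $g^2 = e$), and one checks $x^2 = e$ by evaluating on the faithful action on $\Z(\KP)_{pt}$. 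I would present the argument via this elementary-abelian criterion, as it isolates the single computational check and makes the logical structure transparent.
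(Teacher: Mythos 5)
There is a genuine gap, and it sits exactly where you predicted the difficulty would be --- but it is worse than a missing commutativity check: your candidate for the involution outside $I$ does not lie outside $I$. You take $g=\ind(\sigma)$ for the nontrivial $\sigma\in\Aut(\KP)=\mathbb{Z}_2$. Under the equivalence \eqref{KP=TY}, $\sigma$ is the automorphism of $\mathbb{Z}_2\times\mathbb{Z}_2$ swapping $b$ and $c$ and fixing $1,\,a,\,m$; since the forgetful functor intertwines $\ind(\sigma)$ with $\sigma$, the autoequivalence $\ind(\sigma)$ preserves the supports $\{1,a\}$ and $\{b,c\}$ of objects of $\Z(\KP)$. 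By Lemma~\ref{subcategories of ZKP}(ii) the two Lagrangian subcategories are distinguished precisely by whether their two-dimensional object is supported on $1\oplus a$ or on $b\oplus c$, so $\ind(\sigma)$ fixes each of them, hence fixes each of the two pointed-dual Lagrangian algebras, hence lies in $I=\Stab(A_D)$ by Theorem~\ref{alphaA=A}. Your stated justification that ``the distinct orbit counts force $g\notin I$'' is a non sequitur in any case: an element stabilizing a point of the size-$4$ orbit can perfectly well also stabilize a point of the size-$2$ orbit (and here it does). So you never actually produce an element of $\BrPic(\KP)$ outside $I$, and the strategy collapses at its first step.

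The missing ingredient, which is the actual content of the paper's proof, is the Picard induction $\Pic(\Z(\KP)_{pt})\to\BrPic(\KP)$: since $\Z(\KP)_{pt}\cong\Rep(\mathbb{Z}_2^3)$ is symmetric non-Tannakian, $\Pic(\Z(\KP)_{pt})=\mathbb{Z}_2^4$ by \cite{C}, while the kernel \eqref{Ker PDC} is the universal grading group $\mathbb{Z}_2^3$, so the image contains a nontrivial $\alpha$ inducing the trivial autoequivalence of $\Z(\KP)_{pt}$. Because every nontrivial element of $I$ acts nontrivially on $\Inv(\Z(\KP))$ (proof of Corollary~\ref{Z2+Z2}), this $\alpha$ is the required element outside $I$, and it must therefore swap the two Lagrangian subcategories. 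The conclusion then follows from the injectivity of the combined action on $\Inv(\Z(\KP))$ and on the two-element set of Lagrangian subcategories, which lands in $\mathbb{Z}_2^3$. Note finally that your proposed verification device --- ``the faithful action on $\Z(\KP)_{pt}$'' --- rests on a false premise: that action is not faithful, since the $\alpha$ above is a nontrivial element of its kernel; faithfulness only holds after adjoining the action on the Lagrangian subcategories.
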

\begin{proof}
Consider the Picard induction homomorphism (see Section~\ref{section 2.5})
\begin{equation}
\label{pic ind}
\Pic(\Z(\KP)_{pt}) \to \Pic(\Z(\KP)) = \BrPic(\KP) : \M \mapsto \Z(\KP) \bt_{\Z(\KP)_{pt}}  \M. 
\end{equation}
By Lemma~\ref{subcategories of ZKP}(i) and the result of \cite{C} we have $\Pic(\Z(\KP)_{pt}) = \mathbb{Z}_2^4$.
On the other hand, by \eqref{Ker PDC} the kernel of \eqref{pic ind} coincides with the universal grading group 
of $\Z(\KP)$, i.e., is isomorphic to $\mathbb{Z}_2^3$. Thus, the image of  \eqref{pic ind} has order $2$.  
Let $\alpha\in \Aut^{br}(\Z(\KP))$ correspond to the non-trivial element
of this image. Since $\alpha$  induces the trivial autoequivalence of $\Z(\KP)_{pt}$, it is not in the image of \eqref{KP ind},
cf.\ the proof of Corollary~\ref{Z2+Z2}.  Hence, $\alpha$  must permute the two Lagrangian subcategories of $\Z(\KP)$.  
Since these subcategories are fixed by the image of \eqref{KP ind} we get an injective  homomorphism from $\Aut^{br}(\KP)$ to
$\mathbb{Z}_2^3$ and the result follows from Corollary~\ref{order is 8}.
\end{proof}

\bibliographystyle{ams-alpha}

\end{document}